\documentclass[12pt,a4j,twoside,reqno]{amsart}

\usepackage[top=28truemm,bottom=28truemm,left=26truemm,right=26truemm]{geometry}

\usepackage{amssymb, amsmath, amsthm}
\usepackage{amsfonts}
\usepackage{color}
\usepackage{cancel}
\usepackage{mathrsfs}
\usepackage{yfonts}
\usepackage{accents}
\usepackage{relsize}
\usepackage{graphicx}
\usepackage[utf8]{inputenc}
\usepackage{cite}
\usepackage{multicol}
\usepackage{multirow}
\usepackage{float}
\usepackage{relsize}
\usepackage{array}
\usepackage{adjustbox}
\usepackage{tikz}
\usepackage{comment}
\usepackage[abbrev]{amsrefs}
\usepackage{mathtools}

\pagestyle{plain}

\numberwithin{equation}{section}

\numberwithin{equation}{section}

\renewcommand{\d}{\mathrm{d}}

\newcommand{\E}{\mathcal{E}}
\newcommand{\F}{\mathcal{F}}
\renewcommand{\H}{\mathcal{H}}

\newcommand{\N}{\mathbb{N}}

\newcommand{\R}{\mathbb{R}}

\newcommand{\Tr}{\mathrm{Tr}}

\newcommand{\dv}{{\rm{div}}}

\newcommand{\loc}{\mathrm{loc}}

\newcommand{\dom}{{\partial\Omega}}
\newcommand{\ddom}{{\partial_D \Omega}}
\newcommand{\ndom}{{\partial_N \Omega}}

\newcommand{\ind}{{I_{(-\infty,0]}}}

\newcommand{\dind}{{\partial \ind}}

\newcommand{\ztau}{{z_\tau}}
\newcommand{\ztaubar}{\overline{z}_\tau}
\newcommand{\ftau}{{f_\tau}}
\newcommand{\ftaubar}{\overline{f}_\tau}

\newcommand{\sigmatau}{{\sigma_\tau}}
\newcommand{\sigmataubar}{\overline{\sigma}_\tau}

\newcommand{\vep}{\varepsilon}
\newcommand{\vphi}{\varphi}

\newcommand{\included}{\hookrightarrow}
\newcommand{\wto}{\rightharpoonup}

\newcommand{\disp}{\displaystyle}

\newcommand{\mel}[1]{\MoveEqLeft{#1}}

\theoremstyle{plain}
\newtheorem{thm}{Theorem}[section]
\newtheorem{prop}[thm]{Proposition}
\newtheorem{lem}[thm]{Lemma}

\theoremstyle{definition}
\newtheorem{defi}[thm]{Definition}

\theoremstyle{remark}
\newtheorem{rem}[thm]{Remark}

\makeatletter
\@namedef{subjclassname@2020}{%
  \textup{2020} Mathematics Subject Classification}
\makeatother

\title{Quasistatic evolution equations with irreversibility arising from fracture mechanics}
\author{Kotaro Sato}
\address{Mathematical Institute, Tohoku University, Sendai 980-8578, Japan}
\email{kotaro.sato.t8@dc.tohoku.ac.jp}

\subjclass[2020]{\textit{Primary}: 35K86; \textit{Secondary}: 49J40, 74R10}

\keywords{Doubly-nonlinear evolution equation, Subdifferential operator, Variational inequality of obstacle type, Phase-field models of brittle fracture, Long-time behavior of solutions}

\begin{document}

\maketitle

\begin{abstract}
	In this paper, the global-in-time $ L^2 $-solvability of the initial-boundary value problem for differential inclusions of doubly-nonlinear type is proved.
	This problem arises from fracture mechanics, and it is not covered by general existence theories due to the degeneracy and singularity of a dissipation potential along with the nonlinearity of elliptic terms.
	The existence of solutions is proved based on a minimizing movement scheme, which also plays a crucial role for deriving qualitative properties and asymptotic behaviors of strong solutions.
	Moreover, the solutions to the initial-boundary value problem comply with three properties intrinsic to brittle fracture: \textit{complete irreversibility}, \textit{unilateral equilibrium of an energy} and \textit{an energy balance law}, which cannot generally be realized in dissipative systems.
	Furthermore, long-time dynamics of strong solutions are revealed, i.e., each stationary limit of the global-in-time solutions is characterized as a solution to the stationary problem.
\end{abstract}

\section{Introduction}\label{sec:intro}

Let $ N = 1, 2, \dots $ and let $ \Omega \subset \R^N $ be a bounded domain with Lipschitz boundary $ \dom $, and $ T > 0 $.
Let $ \ddom, \ndom \subset \dom $ be relatively open subsets satisfying $ \ddom \cap \ndom = \emptyset $ and $ \H^{N-1}(\dom \setminus (\ddom \cup \ndom))=0 $, where $ \H^{N-1} $ denotes the $ (N-1) $-dimensional Hausdorff measure.
Moreover, let $ \partial_\nu $ denote the outward normal derivative on $ \ndom $.
The present paper is concerned with the following differential inclusion:
\begin{equation}\label{eq}
	\dind \left( \partial_t z \right) - \Delta z + \lambda z + \sigma \gamma(z) \ni f \quad \mbox{ in } \ \Omega \times (0,T),
\end{equation}
equipped with the initial and boundary conditions,
\begin{equation}\label{bcic}
	\left\lbrace
	\begin{alignedat}{4}
		&z = 0 & \quad &\mbox{ on } \ \ddom \times (0,T),\\
		&\partial_\nu z = 0 &&\mbox{ on } \ \ndom \times (0,T),\\
		&z(\cdot,0) = z_0(\cdot) &&\mbox{ in } \ \Omega,
	\end{alignedat}
	\right.
\end{equation}
where $ \lambda \geq 0 $ is a constant, $ z_0 = z_0(x) $, $ f=f(x,t) $ and $ \sigma = \sigma(x,t) \geq 0 $ are given functions and $ \gamma : \R \to \R $ is a continuous function.
Moreover, $\ind : \R \to [0,+\infty]$ stands for the indicator function supported over $(-\infty, 0]$, i.e., $\ind(s) = 0$ if $s \leq 0$ and $\ind(s) = +\infty$ if $s > 0$.
Furthermore, $\dind : \R \to 2^\R$ denotes the subdifferential of $\ind$, that is, 
\begin{align*}
	\dind \left( s \right) &= \left\{ \, \xi \in \R \, \colon \, \xi(r-s) \leq 0 \quad \mbox{for } \, r \in \R \, \right\}\\
	&= \begin{cases}
		\{0\} &\mbox{ if } \ s < 0,\\
		[0,+\infty) &\mbox{ if } \ s = 0,\\
		\emptyset &\mbox{ if } \ s>0
	\end{cases}
\end{align*}
for $ s \in \R $ with domain $D(\dind) = (-\infty,0]$.
The differential inclusion \eqref{eq} can be rewritten as a variational inequality of the form
\begin{equation}\label{vi}
	\left\lbrace
	\begin{alignedat}{4}
		&\partial_t z \leq 0, \quad -\Delta z + \lambda z + \sigma \gamma(z) \leq f & \quad &\mbox{ in } \ \Omega \times (0,T),\\
		&\partial_t z \left( -\Delta z + \lambda z + \sigma \gamma(z) - f \right) = 0 &&\mbox{ in } \ \Omega \times (0,T).
	\end{alignedat}
	\right.
\end{equation}
In particular, solutions to \eqref{eq} (and \eqref{bcic}) are nonincreasing in time.

On the other hand, let $ H := L^2(\Omega) $ and $ V := \{ v \in H^1(\Omega) \colon v = 0 \, \mbox{ on } \, \ddom \} $ (see also Subsection \ref{ssec:fs}), and set $ z(t) := z(\cdot,t) $, $ f(t) := f(\cdot,t) $ and $ \sigma(t) := \sigma(\cdot,t) $.
Then the equation \eqref{eq} equipped with the boundary condition \eqref{bcic} can be reduced into the Cauchy problem of the following doubly-nonlinear evolution equation:
\begin{equation}\label{eq:general}
	\partial \psi (\partial_t z(t)) + \partial \vphi^t (z(t)) \ni f(t) \quad \mbox{ in } \ H, \quad 0 < t < T,
\end{equation}
where $ \psi, \vphi^t : H \to \R \cup \{ +\infty \} $ are defined by
\begin{align*}
	\psi(v) &=
	\left\lbrace
		\begin{alignedat}{4}
			&0 &\quad &\mbox{ if } \ v \leq 0 \quad \mbox{ a.e.~in } \ \Omega,\\
			&{+\infty} &&\mbox{ otherwise,}
		\end{alignedat}
	\right.\\
	\vphi^t(v) &=
	\left\lbrace
		\begin{alignedat}{4}
			&\frac{1}{2} \int_\Omega |\nabla v|^2 \, \d x + \frac{\lambda}{2} \int_\Omega |v|^2 \, \d x + \int_\Omega \sigma(t) \hat{\gamma}(v) \, \d x &\quad &\mbox{ if } \ v \in V,\\
			&{+\infty} &&\mbox{ otherwise}
		\end{alignedat}
	\right.
\end{align*}
for $ v \in H $ and $ t \in (0,T) $, and $ \partial \psi, \partial \vphi^t : H \to 2^H $ are the subdifferential operators of $ \psi $ and $ \vphi^t $, respectively.
Here we denote by $ \hat{\gamma} $ the primitive function of $ \gamma $ which satisfies that $ \sigma(t) \hat{\gamma}(v) \in L^1(\Omega) $ for $ v \in V $ and a.e.~$ t \in (0,T) $.
For the case where $ \vphi^t $ is independent of $ t $, i.e., $ \vphi^t \equiv \vphi $, such doubly-nonlinear equations have been studied in \cite{arai, barbu1975, colli, cv, sss, e2ciec} for example, and in particular, some results concerning the existence of solutions can be found.
However, there seems no general theory covering the equation \eqref{eq:general} due to the non-coercivity and the singularity of $ \psi $ along with the nonlinearity of $ \partial \vphi $; indeed, in \cite{e2ciec}, equations of type \eqref{eq:general} are treated with $ \gamma \equiv 0 $.
However, the proof exhibited in \cite{e2ciec} depends on the linearity of $ \partial \vphi $, and therefore, the strategy mentioned in \cite{e2ciec} is inapplicable to our problems.
In particular, the equation treated in \cite{e2ciec} can be regarded as a specific case of \eqref{eq}.
Moreover, we also address the case where $ \vphi^t $ depends on $ t $, since the coefficient function $ \sigma $ in \eqref{eq} depends on $ t $.
This type of equation is derived from a doubly-nonlinear system \eqref{eq:system} in our study, which comes from a fracture model explained later (see also \eqref{eq:example} and \eqref{eq:single1} below).

The problem concerned in the present paper is motivated by some variational problems arising from mechanics of brittle fractures, such as the growth of cracks.
Classically, such problems are based on the Griffith criterion, which states that a crack in an elastic body propagates as soon as the (local) energy release rate becomes larger than the toughness of the body (see \cite{griffith}).
On the other hand, a quasistatic framework introduced by Francfort and Marigo (see \cite{fm}) is well known as a pioneering work approaching the crack growth problems from a variational perspective, which is based on a global minimization of the so-called \emph{Francfort-Marigo energy} with an irreversibility condition of fracture.
The Francfort-Marigo energy consists of the sum of the elastic energy and the surface energy, which is of the form
\[
	\F(u,\Gamma) = \frac{1}{2} \int_{\Omega \setminus \Gamma} \left| \nabla u \right|^2 \d x + k \H^{N-1}(\Gamma),
\]
where $ u : \Omega \to \R^m $ for $ m = 1, 2, \dots $ is a displacement field, and on the other hand $ \Gamma \subset \overline{\Omega} $ is a crack set on the elastic body $ \Omega $.
Moreover, $ \H^{N-1} $ denotes the $ (N-1) $-dimensional Hausdorff measure.
The constant $ k>0 $ represents the toughness of the material.
Given $ g : \ddom \setminus \Gamma \to \R^m $, let $ \tilde{\mathcal{F}}(\Gamma;g) $ be the minimum value of $ \mathcal{F}(u,\Gamma) $ with respect to $ u $ satisfying the non-homogeneous Dirichlet boundary condition $ u = g $ on $ \ddom \setminus \Gamma $.
In~\cite{fm}, for boundary displacement fields $g = g(x,t)$ varying in time, a \textit{quasistatic evolution} of crack $ t \mapsto \Gamma (t) $ on $[0,T]$ is defined to satisfy the following three conditions:
\begin{enumerate}
	\item $ \Gamma(t_1) \subset \Gamma(t_2) \, $ for all $ 0 \leq t_1 \leq t_2 \leq T $;
	\item for each $ t \in [0,T]$, it holds that $ \tilde{\mathcal{F}}(\Gamma(t);g(\cdot,t)) \leq \tilde{\mathcal{F}}(\Gamma;g(\cdot,t)) $ for any closed subsets $ \Gamma \subset \overline{\Omega}$ satisfying $ \bigcup_{s<t} \Gamma(s) \subset \Gamma $;
	\item the total energy function $ t \mapsto \tilde{\mathcal{F}}(\Gamma(t);g(\cdot,t)) $ is absolutely continuous on $[0,T]$ and its derivative is precisely obtained by the external loads, i.e., it holds that
	\[
		\frac{\d}{\d t} \, \tilde{\mathcal{F}}(\Gamma(t);g(\cdot,t)) = \int_{\Omega \setminus \Gamma(t)} \nabla u_*(\cdot,t) \cdot \nabla \dot{g}(\cdot,t) \, \d x \quad \mbox{ for a.e.~} t \in (0,T),
	\]
	where $ u_*=u_*(x,t) $ is the minimizer of $ \F $, i.e., $ \F(u_*(\cdot,t),\Gamma(t)) = \tilde{\mathcal{F}}(\Gamma(t);g(\cdot,t)) $ and $ u_* $ satisfies the boundary condition $ u_*(\cdot,t) = g(\cdot,t) $ on $ \ddom \setminus \Gamma $ for a.e.~$ t \in (0,T) $.
	Moreover, $ \dot{g} $ denotes the time derivative of $ g $.
\end{enumerate}
Condition (i) constrains the crack evolution to be completely irreversible, while (ii) can be regarded as a unilateral minimization of the energy, i.e., the crack set $ \Gamma(t) $ minimizes the Francfort-Marigo energy at each time among all cracks containing the previous ones.
Condition (iii) describes an energy balance criterion of the whole system, and in particular, the crack never evolves unless the external load $g(x,t)$ varies in time and the energy never dissipates spontaneously.
Some existence results on the quasistatic evolution $ t \mapsto \Gamma(t) $ have already been obtained, e.g., in~\cite{dt2002}, where anti-planar shear fracture in two-dimensional cylindrical elastic bodies is considered, and moreover, in~\cite{dft2005}, the former result is extended to more general configurations such as nonlinear elasticity for heterogeneous and anisotropic materials in general dimension (see also \cite{chambolle2003, fl} and references therein).

In view of numerical analysis, Ambrosio and Tortorelli \cite{at1990, at1992} introduced a regularized functional, which is called an \emph{Ambrosio-Tortorelli energy} $ \mathcal{F}_\vep $ (here AT functional for short) with an approximation parameter $ \vep > 0 $.
Their approach is to replace the crack set $ \Gamma \subset \overline{\Omega} $ with the phase field $ z = z(x,t) \in [0,1] $, which takes $ 1 $ on the intact part and $ 0 $ on the cracked part, of the material, respectively.
In particular, in terms of the phase field $ z = z(x,t) $ and the displacement field $ u = u(x,t) $, AT functional $ \mathcal{F}_\vep $ is written as
\[
	\mathcal{F}_\vep(u,z;g) = \frac{1}{2} \int_\Omega \left( z^2 + \delta_\vep \right) \left| \nabla u \right|^2 \d x + \frac{\vep}{2} \int_\Omega \left| \nabla z \right|^2 \d x + \frac{1}{2\vep} \int_\Omega \left( 1-z \right)^2 \d x,
\]
where $(u,z) \in H^1(\Omega) \times H^1(\Omega)$ satisfies that $ z = 1 $, $u = g$ on $ \ddom $ for the boundary displacement field $ g $ and $ 0 \leq z \leq 1 $ a.e.~in $\Omega$.
Moreover, $0 < \delta_\vep \ll \vep$ is a constant for regularization.
For time-varying boundary fields $ g = g(x,t) $, minimizing orbit $ t \mapsto (u_\vep(t), z_\vep(t)) $ for AT functional is similarly defined as (i)--(iii) above, which is also called quasistatic evolution for AT functional.
However, in this model, the monotone growth condition (i) is replaced by the non-increasing condition of $ t \mapsto z_\vep(\cdot,t) $.
Furthermore, it is known that $ \mathcal{F}_\vep $ converges to the original Francfort-Marigo energy $ \mathcal{F} $ as $ \vep \to 0_+ $ in the sense of $ \Gamma $-convergence (see \cite{at1990}).

While the quasistatic evolution for AT functional is constructed in \cite{giacomini} via the minimizing movement scheme, there are less analysis for such evolutions from PDE points of view, and in particular, such quasistatic evolutions have not yet been characterized as solutions to the corresponding Euler-Lagrange equation.
Here, a formal Euler-Lagrange system of $ \mathcal{F}_\vep $ with completely irreversible constraint can be seen as follows:
\begin{equation}\label{eq:system}
	\left\{
	\begin{alignedat}{4}
		&-\dv \left( (z^2 + \delta_\vep) \nabla u \right) = h &\quad &\mbox{ in } \, \Omega \times (0,T),\\
		&\disp \dind \left( \partial_t z \right) - \vep \Delta z + \frac{z-1}{\vep} + z \left| \nabla u \right|^2 \ni 0 &\quad &\mbox{ in } \, \Omega \times (0,T),
	\end{alignedat}
	\right.
\end{equation}
where $ h = h(x,t) $ is a given function with null average in $ x $, and moreover, we assign the initial-boundary condition
\begin{equation}\label{bcic:system}
 	\left\lbrace
		\begin{alignedat}{4}
			&z = u = 0 &\quad& \mbox{ on } \, \ddom \times (0,T),\\
			&\partial_\nu z = \partial_\nu u = 0 &\quad& \mbox{ on } \, \ndom \times (0,T),\\
			&z(\cdot,0) = z_0 &\quad& \mbox{ in } \Omega.
		\end{alignedat}
	\right.
\end{equation}
Here we find that the subdifferential operator $ \dind $ plays the role of a Lagrange multiplier to constrain the phase field $ z $ not to increase in time.
Although it seems to be natural to tackle with the problem \eqref{eq:system} and \eqref{bcic:system} for further analysis of brittle fractures, there seems no complete result proving the well-posedness of this system, and in particular, the existence of solutions has not yet been proved as far as the author knows.

In the present paper, let us restrict ourselves into the case of $ 1 $-dimensional space $ I = (-1,1) $, and moreover, for simplicity, we impose the (homogeneous) Dirichlet boundary condition on $ z $ and the (homogeneous) Neumann boundary condition on $ u $, respectively.
Then \eqref{eq:system} (and \eqref{bcic:system}) reads
\begin{equation}\label{eq:system1}
	\left\{
	\begin{alignedat}{4}
		&-\partial_x \left( (z^2 + \delta_\vep) u_x \right) = h &\quad &\mbox{ in } \ I \times (0,T),\\
		&\disp \dind \left( \partial_t z \right) - \vep z_{xx} + \frac{z-1}{\vep} + z \left| u_x \right|^2 \ni 0 &&\mbox{ in } \ I \times (0,T)
	\end{alignedat}
	\right.
\end{equation}
and
\begin{equation}\label{bcic:system1}
 	\left\lbrace
		\begin{alignedat}{4}
			&z = 0 &\quad &\mbox{ on } \ \{ -1,1 \} \times (0,T),\\
			&u_x = 0 &&\mbox{ on } \ \{ -1,1 \} \times (0,T),\\
			&z(\cdot,0) = z_0 &&\mbox{ in } \ I,
		\end{alignedat}
	\right.
\end{equation}
where $ u_x = \partial u / \partial x $ and $ z_{xx} = \partial^2 z / \partial x^2 $.
Then, it is noteworthy that the system \eqref{eq:system1} can be (formally) reduced into a single equation of $ z $ combined with the boundary condition \eqref{bcic:system1}.
Indeed, integrating the first equation of \eqref{eq:system1} over $ (-1,x) $ for each $ x \in I $, we have
\begin{align*}
	\int^x_{-1} h(y,t) \, \d y &= - \left( z(x,t)^2 + \delta_\vep \right) u_x(x,t), \quad t \in (0,T),
\end{align*}
and hence it follows that
\begin{equation}\label{eq:ux}
	\left| u_x(x,t) \right|^2 = \frac{1}{\left| z(x,t)^2 + \delta_\vep \right|^2} \left( \int^x_{-1} h(y,t) \, \d y \right)^2, \quad (x,t) \in I \times (0,T).
\end{equation}
Substituting \eqref{eq:ux} into the second equation of \eqref{eq:system1}, we obtain
\begin{equation}\label{eq:sin}
	\dind \left( \partial_t z \right) - \vep z_{xx} + \frac{z-1}{\vep} + \left( \int^x_{-1} h(y,t) \, \d y \right)^2 \frac{z}{\left| z(x,t)^2 + \delta_\vep \right|^2} \ni 0.
\end{equation}
Therefore, by setting $ \lambda := 1 / \vep^2 $, $ f(x,t) := \lambda = 1 / \vep^2 $ and
\begin{equation}\label{eq:example}
	\left\{
	\begin{alignedat}{4}
		\sigma(x,t) &:= \left( \int^x_{-1} h(y,t) \, \d y \right)^2 & \quad &\mbox{ for } \, (x,t) \in I \times (0,T),\\
		\gamma(s) &:= \frac{s}{\vep \left( s^2 + \delta_\vep \right)^2} &&\mbox{ for } \, s \in \R,
	\end{alignedat}
	\right.
\end{equation}
we can rewrite the equation \eqref{eq:sin} as
\begin{equation}\label{eq:single1}
	\dind \left( \partial_t z \right) - z_{xx} + \lambda z + \sigma \gamma (z) \ni f \quad \mbox{ in } \ I \times (0,T).
\end{equation}
Here we used the fact that $ \dind(\partial_t z) / \vep = \dind(\partial_t z) $.
The main equation \eqref{eq} is an $ N $-dimensional version of \eqref{eq:single1}, and this is the reason why we choose the nonlinearity of the form $ \sigma \gamma(z) $ in \eqref{eq}.
Moreover, by virtue of such a formulation from PDE points of view, we can discuss the long-time dynamics of global-in-time solutions to \eqref{eq} (see Theorem \ref{thm:asympt}).
In particular, in 1-dimensional setting, we can identify the stationary limits of global-in-time solutions to the system \eqref{eq:system1} equipped with the initial-boundary condition \eqref{bcic:system1}.

On the other hand, the equation \eqref{eq} is classified as a \textit{rate-independent} equation by virtue of the homogeneity of the dissipation potential $ \ind $ of degree $ 1 $.
However, it is known that, in general, solutions of rate-independent systems combined with nonconvex energies are not continuous in time, i.e., they may have jump points in time even in a local formulation (see also \cite{kmz, mz}).
Indeed, rate-independent models are originally introduced to be applied to some variational problems which may include the discontinuity in time, such as fracture problems.
However, this paper is concerned with a convex energy functional (see (i) of Theorem \ref{thm:main}), so that the solutions are continuous in time and we can focus on the difficulty which comes from the dissipation potential and the nonlinearity of $ \partial \vphi^t $.
Furthermore, we consider the strong ($ L^2 $-)solutions of \eqref{eq} and \eqref{bcic}.

Let $ X = \{ v \in H^2(\Omega) \colon \partial_\nu v = 0 \ \mbox{ on } \ \ndom \} $ (see also Subsection \ref{ssec:fs}).
In this paper, we are interested in the solutions to \eqref{eq} (or equivalently \eqref{vi}) and \eqref{bcic} defined as follows:

\begin{defi}\label{defi:sol}
	A function $ z \in C([0,T];L^2(\Omega)) $ is called a \emph{strong solution} on $ [0,T] $ to \eqref{eq} (or equivalently \eqref{vi}) and \eqref{bcic} if the following (i)--(iv) are all satisfied:
	\begin{enumerate}\itemsep=1mm
		\item $ z \in W^{1,2} (0,T;L^2(\Omega)) \cap L^2 (0,T;X \cap V) $;
		\item $ \partial_t z \leq 0 \, $, $ \, -\Delta z + \lambda z + \sigma \gamma(z) \leq f \, $ a.e.~in $ \, \Omega \times (0,T) $;
		\item $ \partial_t z \left( -\Delta z + \lambda z + \sigma \gamma(z) - f \right) = 0 \, $ a.e.~in $ \, \Omega \times (0,T) $;
		\item $ z(x,0) = z_0(x) \, $ for a.e.~$ x \in \Omega $.
	\end{enumerate}
	Moreover, a function $z \in C \left( [0,\infty);L^2(\Omega) \right)$ is called a \emph{global-in-time strong solution} to \eqref{eq} (or equivalently \eqref{vi}) and \eqref{bcic} if the following (v) and (vi) are satisfied:
	\begin{enumerate}\itemsep=1mm
	\setcounter{enumi}{4}
		\item $z \in W^{1,2}_\mathrm{loc} \left( [0,\infty);L^2(\Omega) \right) \cap L^2_\mathrm{loc} \left( [0,\infty);X \cap V \right)$;
		\item for all $ T>0 $, $ \left. z \right|_{[0,T]} $ is a strong solution to \eqref{eq} and \eqref{bcic} on $ [0,T] $.
	\end{enumerate}
\end{defi}

Let $ V^* $ and $ X^* $ denote the dual spaces of $ V $ and $ X $, respectively.
We introduce the further assumption concerning the regularity of the boundary $ \ddom $.
Define an operator $B:V \to V^*$ by
\[
	\left\langle Bu, v \right\rangle_V := \int_\Omega \left( \nabla u \cdot \nabla v + uv \right) \d x \quad \mbox{ for } \ u, v \in V.
\]
Throughout this paper, we assume that
\begin{equation}\label{ass}
	B^{-1} w \in H^2(\Omega) \cap V \quad \mbox{ for } \ w \in L^2(\Omega).
\end{equation}
The condition \eqref{ass} can be seen as an \textit{elliptic regularity} and it is well known that \eqref{ass} holds true for smooth domains along with either Dirichlet or Neumann boundary condition.
However, it is rather delicate for mixed boundary conditions as well as for non-smooth domains, such as Lipschitz domains (see \cite{lm, grisvard, ms}).
We take mixed boundary conditions into account in view of applications to fracture models (cf.~\cite{fm, giacomini}, see also Subsection \ref{ssec:reg}).

Main theorems of this paper will be stated below.
Our first theorem is concerned with the existence of solutions to \eqref{eq} (or equivalently, \eqref{vi}) and \eqref{bcic}.

\begin{thm}[Existence of solutions]\label{thm:main}
	Assume \eqref{ass} and let $ \lambda \geq 0 $, $ z_0 \in X \cap V $, $ f \in L^2(0,T;L^2(\Omega)) \cap W^{1,2}(0,T;V^*) $, $ \sigma \in L^\infty(\Omega \times (0,T)) \cap W^{1,2}(0,T;L^p(\Omega)) $ for $ p = \max \{ 1, N/2 \} $ if $ N \neq 2 $ and $ p>1 $ if $ N=2 $.
	Let $ \gamma: \R \to \R $ be a continuous function.
	Suppose that the following {\rm (i)}--{\rm (v)} are all fulfilled\/{\rm :}
	\begin{enumerate}
		\item There exists a constant $ L > 0 $ such that
			\begin{align}
				&(\gamma(t) - \gamma(s))(t-s) + L|t-s|^2 \geq 0 \quad \mbox{ for all } \ t,s \in \R,\label{ass:gammal}\\
				&\lambda_0 := \lambda - L \left\| \sigma \right\|_{L^\infty(\Omega \times (0,T))} > 0;\label{ass:sigma}
			\end{align}
		\item there exists a constant \( C_1 > 0 \) such that
			\begin{equation}\label{ass:gammau}
				\left| \gamma(s) \right| \leq C_1 \left( |s|+1 \right) \quad \mbox{ for all } \ s \in \R;
			\end{equation}
		\item it holds that
		\[
			-\Delta z_0 + \lambda z_0 + \sigma(\cdot,0) \gamma(z_0) \leq f(\cdot,0) \quad \mbox{ in } \ V^*;
		\]
		\item there exists a function $ \tilde{f} \in L^2(\Omega) $ such that
			\begin{equation}\label{ass:f}
				f(x,t) \geq \tilde{f}(x) \quad \mbox{ for a.e.~} (x,t) \in \Omega \times (0,T);
			\end{equation}
		\item the function $ \sigma $ satisfies $ \sigma \geq 0 $ a.e.~in $ \, \Omega \times (0,T) $.
	\end{enumerate}
	Then, there exists a strong solution $ z = z(x,t) $ to \eqref{eq} {\rm (}or equivalently \eqref{vi}{\rm )} and \eqref{bcic} such that $ z \in W^{1,2}(0,T;V) $.
\end{thm}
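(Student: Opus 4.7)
The plan is to build a strong solution via the implicit-Euler minimizing movement scheme announced in the abstract. Fix $\tau = T/n$ and $t_k = k\tau$, and let $\ftau^k, \sigmatau^k$ be piecewise time averages of $f$ and $\sigma$ on $(t_{k-1}, t_k]$. Starting from $\ztau^0 := z_0$, iteratively let $\ztau^k$ be the unique minimizer of
\[
	v \mapsto \vphi_\tau^k(v) - \int_\Omega \ftau^k v \, \d x
\]
over the admissible set $\{v \in V : v \leq \ztau^{k-1} \text{ a.e.~in } \Omega\}$, where $\vphi_\tau^k$ denotes $\vphi^t$ with $\sigma(t)$ replaced by $\sigmatau^k$. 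The $L$-semimonotonicity \eqref{ass:gammal}, the sign $\sigma \geq 0$, and $\lambda_0 > 0$ combine to make this functional strictly convex and coercive on $V$; the admissible set is closed, convex and contains $\ztau^{k-1}$, so existence and uniqueness of $\ztau^k$ follow. The Euler--Lagrange condition for the obstacle problem, together with the elliptic regularity \eqref{ass}, gives $\ztau^k \in X \cap V$, the inequality $-\Delta \ztau^k + \lambda \ztau^k + \sigmatau^k \gamma(\ztau^k) \leq \ftau^k$ a.e.~in $\Omega$, and the pointwise discrete complementarity
\[
	(\ztau^k - \ztau^{k-1}) \bigl( -\Delta \ztau^k + \lambda \ztau^k + \sigmatau^k \gamma(\ztau^k) - \ftau^k \bigr) = 0 \quad \text{a.e.~in } \Omega.
\]
The compatibility hypothesis (iii) ensures that $z_0$ is already admissible for this obstacle inequality at time $0$, which will be decisive for the refined estimate below.

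I next introduce the piecewise constant and piecewise affine interpolants $\ztaubar$ and $\ztau$ and derive $\tau$-uniform a priori estimates. Testing the Euler--Lagrange identity at step $k$ with the nonpositive increment $\ztau^k - \ztau^{k-1}$, telescoping in $k$, and using convexity of $v \mapsto \int \sigmatau^k (\hat\gamma(v) + \tfrac{L}{2} v^2) \, \d x$ together with $\lambda_0 > 0$, I expect bounds for $\ztaubar$ in $L^\infty(0,T;V)$ and for $\partial_t \ztau$ in $L^2(0,T;H)$, upgraded via \eqref{ass} to $\ztaubar \in L^2(0,T;X)$. Assumption (iv) ensures the scheme is well-behaved globally. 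For the sharper regularity $\ztau \in W^{1,2}(0,T;V)$ asserted in the theorem, I plan to subtract consecutive Euler--Lagrange identities and test with the second-order difference $(\ztau^k - \ztau^{k-1}) - (\ztau^{k-1} - \ztau^{k-2})$: the $L$-shift from \eqref{ass:gammal} absorbs the non-monotone part into the $\lambda_0 \|\cdot\|^2$-term, the growth bound \eqref{ass:gammau} controls the increment of $\gamma(\ztau^k)$, and the $W^{1,2}$ time-regularity of $f$ and $\sigma$ handles the inhomogeneous terms; the initial compatibility (iii) seeds the induction at $k=1$.

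With these estimates, Aubin--Lions compactness gives a limit $z$ such that $\ztau \to z$ strongly in $C([0,T]; H)$ and weakly in $L^2(0,T; X\cap V) \cap W^{1,2}(0,T;V)$. The growth bound \eqref{ass:gammau} and continuity of $\gamma$ yield $\sigma \gamma(\ztaubar) \to \sigma \gamma(z)$ in $L^2(\Omega \times (0,T))$; the sign constraint $\partial_t z \leq 0$ and the upper inequality for the elliptic operator pass to the limit by weak lower semicontinuity, and the initial condition is inherited from the $C([0,T];H)$ convergence. The most delicate point, and precisely where the linear argument of \cite{e2ciec} breaks down, is recovering the complementarity condition (iii) of Definition \ref{defi:sol}: the quantity $\partial_t z \cdot (-\Delta z + \lambda z + \sigma\gamma(z) - f)$ is an a priori ill-defined product of two weak limits. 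I intend to recover it by passing to the limit in its discrete counterpart: the crucial term $\int_0^T \int_\Omega \partial_t \ztau \cdot (-\Delta \ztaubar) \, \d x \, \d t$ collapses via a chain-rule / integration-by-parts identity to $\tfrac{1}{2}\bigl(\|\nabla z(T)\|^2 - \|\nabla z_0\|^2\bigr)$ in the limit thanks to the $W^{1,2}(0,T;V)$ regularity, while the remaining lower-order terms pass to the limit by a weak-strong pairing. Combining the resulting limit identity with the pointwise sign inequalities then forces $\partial_t z \cdot (-\Delta z + \lambda z + \sigma\gamma(z) - f) = 0$ a.e. This interplay between the sharpened time regularity and the limit identification of the complementarity is the heart of the proof and the main technical obstacle.
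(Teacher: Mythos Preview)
Your overall strategy---minimizing movement scheme, discrete obstacle problem, a priori estimates, compactness, identification of the complementarity via a $\limsup$ argument---matches the paper's, and your account of the final identification step is essentially correct (with the minor caveat that the discrete chain rule for the Laplacian term is an \emph{inequality} by convexity, not an identity; the paper squeezes via $\limsup$ against the pointwise sign constraints). Several of the a priori estimates, however, are misidentified.

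First, the basic energy test (testing step $k$ with $z_k - z_{k-1}$) does \emph{not} yield $\partial_t \ztau \in L^2(0,T;L^2(\Omega))$. The dissipation potential $\ind$ is positively homogeneous, so the pairing of the multiplier $\eta_k \in \dind(z_k - z_{k-1})$ with the increment \emph{vanishes} by complementarity rather than producing $\tau^{-1}\|z_k - z_{k-1}\|_{L^2(\Omega)}^2$. This is precisely the rate-independent degeneracy flagged in the introduction: no bound on the speed of $z$ can come from the equation alone, only from the time-regularity of $f$ and $\sigma$. The basic test gives only $\ztaubar \in L^\infty(0,T;V)$, via a discrete Gronwall argument.

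Second, for $\partial_t z \in L^2(0,T;V)$ the paper does subtract consecutive identities but then tests with the \emph{first} difference $z_k - z_{k-1}$, not the second. Writing $\beta(s) = \gamma(s) + Ls$, one has $(\sigma_k[\beta(z_k)-\beta(z_{k-1})],\, z_k - z_{k-1})_{L^2(\Omega)} \geq 0$ by monotonicity, and the multiplier term satisfies $(\eta_k - \eta_{k-1},\, z_k - z_{k-1})_{L^2(\Omega)} = -(\eta_{k-1},\, z_k - z_{k-1})_{L^2(\Omega)} \geq 0$ since $\eta_{k-1} \geq 0$ and $z_k \leq z_{k-1}$. This yields directly the pointwise-in-$k$ bound $\|z_k - z_{k-1}\|_V^2 \leq C(\|f_k - f_{k-1}\|_{V^*}^2 + \|\sigma_k - \sigma_{k-1}\|_{L^p(\Omega)}^2)$, which sums. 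Your proposed second-difference test instead produces the term $(\sigma_k[\beta(z_k)-\beta(z_{k-1})],\, (z_k - z_{k-1}) - (z_{k-1} - z_{k-2}))_{L^2(\Omega)}$, which has no sign and which the growth bound \eqref{ass:gammau} alone cannot absorb.

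Third, the upgrade to $\ztaubar \in L^2(0,T;X)$ does not follow from elliptic regularity \eqref{ass} alone: you need a two-sided $L^2$ bound on $-\Delta z_k + \lambda z_k$, and the Euler--Lagrange inequality gives only one side. The paper obtains the other via the Lewy--Stampacchia estimate for the obstacle problem, iterated down to $k=0$ using the monotonicity of $\beta$, the ordering $z_k \leq \cdots \leq z_0$, and the lower bound $f_k \geq \tilde f$ from assumption (iv). This is where (iv) and $z_0 \in X$ are actually consumed, and it is the ingredient your sketch does not name.
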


The next theorem shows that the solutions $ z=z(x,t) $ to \eqref{eq} and \eqref{bcic} fulfill three qualitative properties, which correspond to the aforementioned three conditions of quasistatic evolution.
We define an energy functional $ \E : V \times [0,T] \to \R $ by
\begin{equation}\label{eq:energy}
	\E(u,t) := \frac{1}{2} \left\| \nabla u \right\|^2_{L^2(\Omega)} + \frac{\lambda}{2} \left\| u \right\|^2_{L^2(\Omega)} + \int_\Omega \sigma(t) \hat{\gamma}(u) \, \d x - \left\langle f(t), u \right\rangle_V \quad \mbox{ for } \ u \in V,\ t \in [0,T],
\end{equation}
where $ \hat{\gamma} $ is the primitive function of $ \gamma $.

\begin{thm}[Qualitative properties of solutions]\label{thm:qua}
	Let $ f $, $ \sigma $ and $ z_0 $ satisfy all the assumptions in Theorem {\rm \ref{thm:main}} and let $ z \in W^{1,2}(0,T;L^2(\Omega)) \cap L^2(0,T;X \cap V) $ be a strong solution on $ [0,T] $ to \eqref{eq} {\rm (}or equivalently \eqref{vi}{\rm )} and \eqref{bcic}.
	Then $ z $ enjoys the following three conditions\/{\rm :}
	\begin{enumerate}
		\item {\rm (}{\bf Complete irreversibility of evolution}{\rm )} It holds that
		\[
			\partial_t z \leq 0 \quad \mbox{ a.e.~in } \ \Omega \times (0,T);
		\]
		\item {\rm (}{\bf Unilateral equilibrium of the energy}{\rm )} for all $t \in [0,T]$, it holds that
		\[
			\E (z(t),t) \leq \E (v,t)
		\]
		for any $v \in V$ satisfying $v \leq z(t)$ a.e.~in $\Omega${\rm ;}
		\item {\rm (}{\bf Energy balance law}{\rm )} the function $t \mapsto \E(z(t),t)$ is absolutely continuous on $[0,T]$ and the energy balance law holds in the following sense\/{\rm :}
		\[
			\E (z(t),t) - \E (z(s),s) = \int^t_s \int_\Omega \partial_t \sigma(r) \hat{\gamma}(z(r)) \, \d x \, \d r - \int^t_s \left\langle \partial_t f(r), z(r) \right\rangle_V \d r
		\]
		for all $t, s \in [0,T]$. In particular, if both $ f $ and $ \sigma $ are stationary in time, that is, $\partial_t f = \partial_t \sigma \equiv 0$, then no evolution occurs, i.e., $z(t) \equiv z_0$.
	\end{enumerate}
\end{thm}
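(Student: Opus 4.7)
The three parts come from a single convexity observation plus the complementarity built into Definition \ref{defi:sol}. Part (i) is immediate from Definition \ref{defi:sol}(ii). For the other parts, I would first record that under (\ref{ass:gammal})--(\ref{ass:sigma}), the functional $u \mapsto \E(u,t)$ is $\lambda_0$-strongly convex on $V$ in the $L^2$-norm for each fixed $t$: indeed (\ref{ass:gammal}) makes $\hat{\gamma}(\cdot) + \frac{L}{2}(\cdot)^2$ convex on $\R$, so $\int_\Omega \sigma(t)[\hat\gamma(u)+\frac{L}{2}u^2]\,\d x$ is convex by $\sigma \geq 0$, and the remaining $-\frac{L}{2}\int_\Omega \sigma u^2\,\d x$ is absorbed into $\frac{\lambda}{2}\|u\|^2$ via (\ref{ass:sigma}), leaving $\frac{\lambda_0}{2}\|u\|^2$. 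The Gateaux derivative at $u \in X \cap V$ along any $w \in V$ equals $\langle -\Delta u + \lambda u + \sigma(t)\gamma(u) - f(t), w\rangle_{L^2}$.

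For (ii), fix $t \in [0,T]$ and $v \in V$ with $v \leq z(t)$ a.e.; since $z(t) - v \in V$, convexity yields
\[
\E(z(t),t) - \E(v,t) \leq \bigl\langle -\Delta z(t) + \lambda z(t) + \sigma(t)\gamma(z(t)) - f(t),\; z(t) - v\bigr\rangle_{L^2}.
\]
By Definition \ref{defi:sol}(ii) the first factor in the pairing is $\leq 0$ a.e.~in $\Omega$ and the second is $\geq 0$ a.e., so the right-hand side is non-positive. For (iii) I would differentiate $t \mapsto \E(z(t),t)$ along the solution. The regularity $z \in W^{1,2}(0,T;V) \cap L^2(0,T;X\cap V)$ yields via the standard chain rule
\[
\frac{\d}{\d t}\Bigl[\tfrac12\|\nabla z\|^2 + \tfrac{\lambda}{2}\|z\|^2\Bigr] = \langle -\Delta z + \lambda z,\, \partial_t z\rangle_{L^2},
\]
while the product rule gives $\frac{\d}{\d t}\int_\Omega \sigma\hat{\gamma}(z)\,\d x = \int_\Omega \partial_t\sigma\,\hat{\gamma}(z)\,\d x + \int_\Omega \sigma\gamma(z)\partial_t z\,\d x$ and $\frac{\d}{\d t}\langle f,z\rangle_V = \langle\partial_t f, z\rangle_V + \int_\Omega f\partial_t z\,\d x$. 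Adding these, the $\partial_t z$-terms collect into $\langle -\Delta z + \lambda z + \sigma\gamma(z) - f,\, \partial_t z\rangle_{L^2}$, which vanishes a.e.~in $t$ by the complementarity condition Definition \ref{defi:sol}(iii); integrating from $s$ to $t$ produces the energy balance law together with absolute continuity of $t \mapsto \E(z(t),t)$.

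For the final ``no evolution'' statement, if $\partial_t\sigma \equiv 0$ and $\partial_t f \equiv 0$ the balance law yields $\E(z(t),0) = \E(z_0,0)$ for every $t$. Assumption (iii) of Theorem \ref{thm:main}, namely $-\Delta z_0 + \lambda z_0 + \sigma(0)\gamma(z_0) \leq f(0)$ in $V^*$, combined with the subgradient argument used in Part (ii), shows that $z_0$ minimizes $\E(\cdot,0)$ on $\{v \in V : v \leq z_0\}$. Since Part (i) gives $z(t) \leq z_0$ a.e., the function $z(t)$ is a competitor attaining the minimum value, and $\lambda_0$-strong convexity forces $z(t) = z_0$.

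The most delicate step is the chain rule for $\int_\Omega \sigma(t)\hat{\gamma}(z(t))\,\d x$ at the stated minimal regularity. With $\sigma \in W^{1,2}(0,T;L^p)$ and the growth bound (\ref{ass:gammau}) giving $|\hat{\gamma}(z)| \leq C(|z|^2+1)$, one needs $\hat{\gamma}(z) \in L^\infty(0,T;L^{p'})$: this is precisely where the choice $p = \max\{1,N/2\}$ (resp.\ $p>1$ for $N=2$) is used, via the Sobolev embedding $V \hookrightarrow L^{2p'}(\Omega)$ together with $W^{1,2}(0,T;V) \hookrightarrow C([0,T];V)$. Once this product rule and its analogue for $\langle f,z\rangle_V$ are in place, the rest of the argument is essentially assembly.
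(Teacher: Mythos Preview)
Your approach is essentially the paper's: convexity of $\E(\cdot,t)$ for (ii), and the chain rule plus the complementarity in Definition~\ref{defi:sol}(iii) for the energy balance. Your treatment of the ``no evolution'' clause via $\lambda_0$-strong convexity is in fact more explicit than the paper's.

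There is, however, a genuine gap in your argument for (ii). The subgradient inequality
\[
\E(z(t),t) - \E(v,t) \leq \bigl\langle -\Delta z(t) + \lambda z(t) + \sigma(t)\gamma(z(t)) - f(t),\; z(t) - v\bigr\rangle_{L^2(\Omega)}
\]
requires $z(t) \in X$ and the pointwise inequality $-\Delta z(t) + \lambda z(t) + \sigma(t)\gamma(z(t)) \leq f(t)$ a.e.\ in $\Omega$; both are guaranteed only for \emph{a.e.}\ $t \in (0,T)$, since $z \in L^2(0,T;X)$ and Definition~\ref{defi:sol}(ii) is an a.e.\ statement. The theorem asserts unilateral minimality for \emph{every} $t \in [0,T]$. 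The paper closes this gap by a limiting argument that hinges on part (i): given any $t_0 \in [0,T]$ and $v \in V$ with $v \leq z(t_0)$, choose $t_n \uparrow t_0$ along the a.e.\ set; by irreversibility $v \leq z(t_0) \leq z(t_n)$, so $v$ is an admissible competitor at each $t_n$, whence $\E(z(t_n),t_n) \leq \E(v,t_n)$; then pass to the limit using $z \in C([0,T];V)$, $f \in C([0,T];V^*)$ and $\sigma \in C([0,T];L^p(\Omega))$. Without invoking (i), the constraint sets $\{v \leq z(t)\}$ are not nested as $t$ decreases and this extension would fail. You should insert this step.
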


The following two theorems are concerned with global-in-time strong solutions of \eqref{eq} and \eqref{bcic}.
The existence of global-in-time solutions to \eqref{eq} (or equivalently \eqref{vi}) and \eqref{bcic} is proved in Theorem \ref{thm:global}, and Theorem \ref{thm:asympt} is concerned with the long-time dynamics of global-in-time strong solutions to \eqref{eq} (or equivalently \eqref{vi}) and \eqref{bcic}.
More precisely, we verify that each global-in-time solution $ z(t) $ converges to a stationary limit $ z_\infty $ as $ t \to \infty $ under certain assumptions for $ f $ and $ \sigma $, and moreover, we shall characterize the limit as a solution to the stationary problem \eqref{limvi} (or \eqref{limeq}) and \eqref{limbc}.

\begin{thm}[Global existence]\label{thm:global}
	Let $f \in L^2_\loc ([0,\infty);L^2(\Omega))$, $z_0 \in X \cap V$ and $\sigma \in L^\infty(\Omega \times (0,\infty)) $.
	Assume that the following {\rm (i)--(vi)} are all satisfied\/{\rm :}
	\begin{enumerate}\itemsep=1mm
		\item There exists a constant $ L > 0 $ such that
			\begin{align*}
				&(\gamma(t) - \gamma(s))(t-s) + L|t-s|^2 \geq 0 \quad \mbox{ for all } \ t,s \in \R,\notag\\
				&L \left\| \sigma \right\|_{L^\infty(\Omega \times (0,\infty))} < \lambda;
			\end{align*}
		\item there exists a constant \( C_1>0 \) such that
			\[
				\left| \gamma(s) \right| \leq C_1 \left( |s|+1 \right) \ \mbox{ for all } \ s \in \R;
			\]
		\item it holds that $-\Delta z_0 + \lambda z_0 + \sigma(\cdot,0) \gamma(z_0) \leq f(\cdot,0) \,$ in $\, V^*${\rm ;}
		\item there exists $\tilde{f} \in L^2(\Omega)$ such that $\tilde{f}(x) \leq f(x,t) \,$ for a.e.~$(x,t) \in \Omega \times (0,\infty)${\rm ;}
		\item the function $ \sigma = \sigma (x,t) $ satisfies $\sigma(x,t) \geq 0 \,$ for a.e.~$ (x,t) \in  \Omega \times (0,\infty) ${\rm ;}
		\item the function $ f $ and $ \sigma $ satisfy $\partial_t f \in (L^1 \cap L^2)(0,\infty;V^*)$ and $\partial_t \sigma \in (L^1 \cap L^2)(0,\infty;L^p(\Omega))$ for $ p = \max \{ 1, N/2 \} $ if $ N \neq 2 $ and $ p>1 $ if $ N=2 $.
	\end{enumerate}
	Then there exists a global-in-time solution $z \in C \left( [0,\infty);L^2(\Omega) \right)$ to \eqref{eq} {\rm (}or equivalently \eqref{vi}{\rm )} and \eqref{bcic} satisfying
	\[
		z \in W^{1,2}_\loc \left( [0,\infty);V \right) \cap L^2_\loc \left( [0,\infty);X \right).
	\]
\end{thm}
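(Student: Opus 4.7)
The plan is to combine the finite-time existence result of Theorem \ref{thm:main} with uniform-in-$T$ a priori bounds afforded by the new $L^1$-in-time hypothesis (vi), and then pass to the limit $T \to \infty$ by a diagonal extraction argument. First, for any fixed $T>0$, the assumptions (i)--(v) imposed here reproduce exactly (i)--(v) of Theorem \ref{thm:main}. Moreover, (vi) places $\partial_t f \in L^2(0,T;V^*)$ and $\partial_t\sigma \in L^2(0,T;L^p(\Omega))$; together with the given $L^2_\loc$ and $L^\infty$ regularity, this matches the hypotheses on $f$ and $\sigma$ in Theorem \ref{thm:main}. Hence for each $T>0$ there exists a strong solution $z_T$ on $[0,T]$ enjoying the regularity stated in the conclusion.

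The crucial step is to derive estimates on $z_T$ that are uniform in $T$. I would start from the energy balance identity of Theorem \ref{thm:qua}(iii):
\[
	\E(z_T(t),t) - \E(z_0,0) = \int_0^t \!\!\int_\Omega \partial_t \sigma(r) \hat{\gamma}(z_T(r)) \, \d x \, \d r - \int_0^t \langle \partial_t f(r), z_T(r) \rangle_V \, \d r.
\]
Assumption (i) yields $\lambda_0 := \lambda - L\|\sigma\|_{L^\infty} > 0$, which combined with the linear growth bound (ii) on $\gamma$ (whence $|\hat\gamma(v)| \leq C(|v|^2+1)$) and Sobolev embedding gives the uniform coercivity $\E(v,t) \geq c\|v\|_V^2 - c'$ with constants independent of $t$. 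Bounding the right-hand side by Young's inequality and using the $L^1(0,\infty)$ integrability of $\partial_t\sigma$ and $\partial_t f$ from (vi), an integral Gronwall argument produces $\sup_{t\geq 0}\|z_T(t)\|_V \leq C$ uniformly in $T$. Testing the equation by $\partial_t z_T$ (as in the derivation of the energy identity itself) and using this bound gives uniform control of $\|\partial_t z_T\|_{L^2(0,S;H)}$ on each $[0,S]$, and the pointwise variational inequality in Definition \ref{defi:sol}(ii), combined with the elliptic regularity assumption \eqref{ass}, upgrades this to a uniform bound in $L^2(0,S;X\cap V)$.

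With these estimates in hand, I would take $T_n \to \infty$. On each $[0,S]$ the sequence $z_{T_n}$ is bounded in $L^2(0,S;X \cap V) \cap W^{1,2}(0,S;H)$, so by weak/weak-$*$ compactness and Aubin--Lions one extracts, via a diagonal argument, a subsequence such that $z_{T_n} \to z$ weakly in $L^2(0,S;X \cap V)$ and in $W^{1,2}(0,S;H)$ and strongly in $C([0,S];H)$ for every $S>0$. One then passes to the limit in (ii)--(iv) of Definition \ref{defi:sol}: the sign condition $\partial_t z \leq 0$ is stable under weak convergence, $\sigma\gamma(z_{T_n}) \to \sigma\gamma(z)$ in $L^2$ thanks to strong $H$-convergence and the linear growth of $\gamma$, so the pointwise variational inequality passes to the limit; the complementarity identity (iii) can be transferred by recovering it from the energy balance law of Theorem \ref{thm:qua}(iii), whose right-hand side is linear in $z_T$ and $\hat\gamma(z_T)$ and hence trivially continuous under the obtained convergences. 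The main obstacle is the uniform estimate in the second step: the naive bound on $[0,T]$ grows with $T$, and the $L^1$-in-time integrability of $\partial_t f$ and $\partial_t\sigma$ must be exploited through the refined energy identity of Theorem \ref{thm:qua}(iii) rather than a raw differential inequality, because the term $\int_\Omega \sigma\hat\gamma(z)\,\d x$ appearing in $\E$ is not sign-definite and has to be absorbed using the coercivity constant $\lambda_0$ furnished by (i).
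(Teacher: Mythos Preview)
Your approach is genuinely different from the paper's, and it contains a real gap. The paper does \emph{not} derive uniform-in-$T$ estimates and pass to the limit; instead it simply concatenates finite-time solutions. One applies Theorem~\ref{thm:main} on $[0,T]$, picks a time $T_1\in(1,T)$ at which $Z_1(T_1)\in X\cap V$ and the inequality $-\Delta Z_1(T_1)+\lambda Z_1(T_1)+\sigma(T_1)\gamma(Z_1(T_1))\le f(T_1)$ holds (both are true for a.e.\ $t$), and then restarts the problem on $[T_1,T_1+T]$ with this as new initial datum, which satisfies the compatibility condition~(iii) of Theorem~\ref{thm:main}. Iterating and gluing gives a global solution with no compactness argument and no need for the uniform bounds you spend most of your outline on.

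The specific error in your argument is the sentence ``Testing the equation by $\partial_t z_T$ \ldots\ gives uniform control of $\|\partial_t z_T\|_{L^2(0,S;H)}$.'' It does not. The dissipation potential here is $\psi=I_{(-\infty,0]}$, which is $1$-homogeneous, so for any $\eta\in\partial\psi(\partial_t z)$ one has $(\eta,\partial_t z)_{L^2(\Omega)}=\psi(\partial_t z)=0$; testing by $\partial_t z$ therefore reproduces exactly the energy identity and yields \emph{no} term of the form $\|\partial_t z\|^2$. This is precisely the degeneracy flagged in the introduction. In the paper the bound on $\partial_t z$ (Lemma~\ref{lem:zkd}) is obtained at the \emph{discrete} level by subtracting two consecutive discretized equations and testing by $z_k-z_{k-1}$, which exploits the monotonicity of $\partial I_{(-\infty,0]}$ rather than its coercivity; there is no obvious continuous analogue available for an arbitrary strong solution. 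Similarly, your claim that the one-sided inequality in Definition~\ref{defi:sol}(ii) together with elliptic regularity gives an $L^2(0,S;X)$ bound is incomplete: you only get an upper bound on $-\Delta z+\lambda z$, and the lower bound in the paper comes from iterating the Lewy--Stampacchia estimate~\eqref{eq:ls}, again a discretization artefact. Your compactness route can be salvaged by invoking those discretization-based estimates (this is essentially what Lemma~\ref{lem:est} does, but for Theorem~\ref{thm:asympt}, not for existence), yet the concatenation argument is both shorter and avoids the issue entirely.
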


\begin{thm}[Long-time dynamics of global-in-time strong solutions]\label{thm:asympt}
	In addition to the assumptions {\rm (i)}--{\rm (vi)} in Theorem {\rm \ref{thm:global}}, suppose that the following {\rm (vii)} and {\rm (viii)} hold\/{\rm :}
	\begin{enumerate}
		\setcounter{enumi}{6}
		\item $ \sigma  $ does not depend on the time variable $ t $, i.e., there exists a function $ \tilde{\sigma} \in L^\infty(\Omega) $ such that $ \sigma(x,t) = \tilde{\sigma}(x) $ for a.e.~$ (x,t) \in \Omega \times (0,T) ${\rm ;}
		\item there exists a function $ f_\infty \in L^2(\Omega) $ such that
			\[
				f(t) \to f_\infty \quad \mbox{ strongly in } \  L^2(\Omega) \quad \mbox{ as } \ t \to \infty.
			\]
	\end{enumerate}
	Let $ z = z(x,t) $ be the global-in-time strong solution to \eqref{eq} and \eqref{bcic} obtained by Theorem {\rm \ref{thm:global}} \emph{via the time-discretization scheme} {\rm (}see Section {\rm \ref{sec:main})}.
	Then there exists a function $z_\infty \in X \cap V$ such that
	\[
		z(t) \to z_\infty \quad \mbox{ strongly in } \ V \quad \mbox{ as } \ t \to \infty,
	\]
	and it satisfies that
	\[
		z_\infty \leq z_0, \quad -\Delta z_\infty + \lambda z_\infty + \tilde{\sigma} \gamma(z_\infty) \leq f_\infty \quad \mbox{ a.e.~in } \ \Omega.
	\]
	Moreover, assume that
	\begin{equation}\label{ass:fglobal}
		f_\infty \leq f(t) \quad \mbox{ a.e.~in } \ \Omega \quad \mbox{ for a.e.~} t \in (0,\infty).
	\end{equation}
	Then the limit function $ z_\infty \in X \cap V $ is characterized as a unique strong solution to
	\begin{equation}\label{limvi}
		\left\lbrace
		\begin{alignedat}{4}
		&z_\infty \leq z_0, \quad -\Delta z_\infty + \lambda z_\infty + \tilde{\sigma} \gamma(z_\infty) \leq f_\infty & \quad &\mbox{ in } \ \Omega,\\
		&(z_\infty - z_0) \left( -\Delta z_\infty + \lambda z_\infty + \tilde{\sigma} \gamma(z_\infty) - f_\infty \right) = 0 &&\mbox{ in } \ \Omega
		\end{alignedat}
		\right.
	\end{equation}
	equipped with the boundary condition
	\begin{equation}\label{limbc}
		\left\lbrace
		\begin{alignedat}{4}
			&z_\infty = 0 & \quad &\mbox{ on } \ \ddom,\\
			&\partial_\nu z_\infty = 0 &&\mbox{ on } \ \ndom.
		\end{alignedat}
		\right.
	\end{equation}
	In particular, the variational inequality \eqref{limvi} is equivalently rewritten as the following differential inclusion\/{\rm :}
	\begin{equation}\label{limeq}
		\dind \left( z_\infty - z_0 \right) - \Delta z_\infty + \lambda z_\infty + \tilde{\sigma} \gamma(z_\infty) \ni f_\infty \ \mbox{ in } \ \Omega.
	\end{equation}
\end{thm}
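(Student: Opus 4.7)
The plan is to proceed in four steps: uniform bounds via the energy balance, extraction of a monotone limit $z_\infty$, upgrade to strong $V$-convergence using the unilateral equilibrium, and under~\eqref{ass:fglobal}, identification of $z_\infty$ with the unique stationary solution. Since $\sigma$ is time-independent by~(vii), the energy balance of Theorem~\ref{thm:qua}(iii) reduces to $\E(z(t),t) = \E(z_0,0) - \int_0^t \langle \partial_t f(r), z(r)\rangle_V\,\d r$. Assumption~(i) together with~\eqref{ass:gammau} makes $\E(\cdot, t)$ coercive in $V$ uniformly in $t$ (the constant $\lambda_0 > 0$ absorbs the $\hat\gamma$-contribution), so $\partial_t f \in L^1(0,\infty; V^*)$ from~(vi) combined with a Gronwall-type argument yields $\sup_{t\geq 0}\|z(t)\|_V < \infty$. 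The monotonicity $\partial_t z \leq 0$ from Theorem~\ref{thm:qua}(i) combined with the $L^2$-bound then gives pointwise a.e.~and strong $L^2$-convergence of $z(t)$ to some $z_\infty \leq z_0$, while the $V$-bound upgrades this to weak $V$-convergence.

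For the strong $V$-convergence and the a.e.~inequality on $z_\infty$: since $z_\infty \leq z(t)$ a.e.~for every $t$, I would use $v = z_\infty$ in the unilateral equilibrium of Theorem~\ref{thm:qua}(ii), yielding $\E(z(t),t) \leq \E(z_\infty, t)$. The limit $\lim_{t\to\infty}\E(z(t),t) =: E_\infty$ exists by the energy balance, and $\E(z_\infty, t) \to \E_\infty(z_\infty)$ as $t \to \infty$, where I set $\E_\infty(v) := \frac{1}{2}\|\nabla v\|^2_{L^2} + \frac{\lambda}{2}\|v\|^2_{L^2} + \int_\Omega \tilde{\sigma}\hat\gamma(v)\,\d x - \int_\Omega f_\infty v\,\d x$. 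Thus $E_\infty \leq \E_\infty(z_\infty)$; the converse inequality follows from the weak lower semicontinuity of $\E_\infty$ combined with strong $L^2$-convergence of the lower-order terms, so $E_\infty = \E_\infty(z_\infty)$ and the gradient norms converge, upgrading the weak to strong $V$-convergence. Passing to the $V^*$-limit in the pointwise inequality $-\Delta z(t) + \lambda z(t) + \sigma\gamma(z(t)) \leq f(t)$, together with an $L^2$-bound on the nonnegative obstacle multiplier $\mu(t) := f(t) + \Delta z(t) - \lambda z(t) - \sigma\gamma(z(t))$ and elliptic regularity~\eqref{ass}, then gives $z_\infty \in X \cap V$ and the a.e.~inequality $-\Delta z_\infty + \lambda z_\infty + \tilde{\sigma}\gamma(z_\infty) \leq f_\infty$.

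Under~\eqref{ass:fglobal}, strict convexity (from $\lambda_0 > 0$) and coercivity of $\E_\infty$ on the closed convex set $K := \{v \in V : v \leq z_0\}$ guarantee a unique minimizer $\bar z$, and the associated Euler--Lagrange inequality together with~\eqref{ass} shows that $\bar z \in X \cap V$ is the unique strong solution of~\eqref{limvi}--\eqref{limbc}. Because $-\Delta\bar z + \lambda\bar z + \tilde\sigma\gamma(\bar z) \leq f_\infty \leq f(\cdot, t)$ by~\eqref{ass:fglobal} and $\bar z \leq z_0 = z(0)$, $\bar z$ is a stationary subsolution of~\eqref{eq}; a comparison principle for~\eqref{eq}---proved by testing the inclusion against $(\bar z - z(t))^+$ and exploiting the Lipschitz-type monotonicity~\eqref{ass:gammal} together with the structure of $\dind$---yields $\bar z \leq z(t)$ for all $t \geq 0$, hence $\bar z \leq z_\infty$. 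Applying Theorem~\ref{thm:qua}(ii) with $v = \bar z$ and passing $t \to \infty$ gives $\E_\infty(z_\infty) \leq \E_\infty(\bar z)$, while the minimizing property of $\bar z$ applied to $z_\infty \in K$ gives the reverse; strict convexity forces $z_\infty = \bar z$. The main obstacle is the comparison principle: for the multivalued inclusion~\eqref{eq} with the nonlinearity $\sigma\gamma(z)$, this does not follow from classical theory and must be built by hand from the structure of $\dind$ and~\eqref{ass:gammal}. A secondary technical point is the $X$-regularity of $z_\infty$ in the preceding step, for which controlling $\mu(t)$ in $L^2$ with sufficient uniformity as $t \to \infty$ is required.
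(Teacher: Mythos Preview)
Your route is genuinely different from the paper's and the energy-balance/unilateral-equilibrium argument for strong $V$-convergence is an elegant alternative. The paper instead exploits the discretization directly: Lemma~\ref{lem:est} transfers the pointwise Lewy--Stampacchia bound \eqref{eq:zkl} to the continuous solution, giving $\sup_{t>0}\|\Delta z(t)\|_{L^2(\Omega)}<\infty$; compactness of $X\hookrightarrow V$ then yields $z_\infty\in X\cap V$ and strong $V$-convergence in one stroke. For the identification under \eqref{ass:fglobal}, the paper compares twice at the level of obstacle problems: $\xi\leq z(t)$ via Proposition~\ref{prop:comp} (the discretization-based comparison for \eqref{eq}), and $z_\infty\leq\xi$ via the comparison principle for the stationary obstacle problem in Theorem~\ref{thm:dis}. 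Your final step---combining $\bar z\leq z(t)$ with unilateral minimality and strict convexity of $\E_\infty$---is a clean variant of this, and the reverse inequality $\E_\infty(\bar z)\leq\E_\infty(z_\infty)$ does not even require $\bar z\leq z_\infty$, only $z_\infty\leq z_0$.

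However, the two points you flag as obstacles are real and your sketches do not close them. First, the $X$-regularity of $z_\infty$: without the discretization you have only $z\in L^2_{\mathrm{loc}}([0,\infty);X)$, and there is no evident mechanism to bound $\|\mu(t)\|_{L^2(\Omega)}$ uniformly as $t\to\infty$; the Lewy--Stampacchia estimate is precisely what supplies this, and it is derived in the paper only through the discrete scheme. Second, and more seriously, the direct comparison ``test against $(\bar z - z(t))^+$'' fails. Writing $\eta(t)=f(t)+\Delta z(t)-\lambda z(t)-\tilde\sigma\gamma(z(t))\in\dind(\partial_t z(t))$ and $\bar\eta\geq 0$ for the stationary multiplier, subtracting and testing gives a term $\bigl(\bar\eta-\eta(t),\,(\bar z-z(t))^+\bigr)_{L^2(\Omega)}$; here $\bar\eta\geq 0$ helps, but $-\eta(t)\leq 0$ has the \emph{wrong sign}, and neither $\eta\,\partial_t z=0$ nor \eqref{ass:gammal} rescues it. At the discrete level this term is controlled because $\eta_k\in\dind(z_k-z_{k-1})$ ties the multiplier to the obstacle constraint $z_k\leq z_{k-1}$ (see the proof of Theorem~\ref{thm:dis}), and Proposition~\ref{prop:comp} inherits this through the limit. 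This is exactly why the theorem is stated only for the solution built via the time-discretization scheme (cf.~Remark~\ref{rem:global}(i) and Remark~\ref{rem:comp}); your intrinsic approach would need a new idea here.
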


\noindent
{\bf Plan of the paper.}\,
The present paper consists of five sections.
In Section \ref{sec:pre}, we shall set up some notation and state some preliminary facts which will be used in later sections.
Section \ref{sec:main} is devoted to proving Theorem \ref{thm:main} via the so-called minimizing movement scheme.
In Section \ref{sec:qua}, we shall verify Theorem \ref{thm:qua}, where we give a proof which is independent of the construction of the solutions.
Section \ref{sec:global} concerns global dynamics of global-in-time solutions for the initial-boundary value problem.
More precisely, we shall first prove Theorem \ref{thm:global} and then verify Theorem \ref{thm:asympt} concerning convergence of each global-in-time solution to a stationary limit, which can be characterized as a solution to some variational inequality.

\section{Preliminaries}\label{sec:pre}
\subsection{Notation}\label{ssec:note}

For $ a, b \in \R $, let us denote the minimum of $ a $ and $ b $ by $ a \wedge b $, and the maximum by $ a \vee b $.
Let $ \vphi_+ := \vphi \vee 0 $ denote the positive part of $ \vphi $.
For $ d \in \N $, let $ \H^d \left( \cdot \right) $ denote the $ d $-dimensional Hausdorff measure.
We denote by $C$ a non-negative constant which is independent of the elements of the corresponding space or set and may vary from line to line.

For each Banach space $ E $, let $ E^* $ denote the dual space of $ E $.
Then we denote by $ \left\langle f,x \right\rangle_E $ the duality pairing of $ x \in E $ and $ f \in E^* $.
For an open interval $ I $ on $ \R $ and $1 \leq p \leq +\infty $, we denote by $ L^p \left( I;E \right) $ and $ W^{1,p} \left( I;E \right) $ the Bochner space and the Sobolev-Bochner space, respectively.
For $ f \in L^p(I;L^r(\Omega)) $, we often write $ f(t) := f(\cdot,t) \in L^r(\Omega) $.
Moreover, we simply denote $ \partial / \partial t $ by $ \partial_t $.
Furthermore, unless otherwise stated, we denote by $ q $ the Sobolev critical exponent, i.e., $ q := 2N/(N-2) $ if $ N \geq 3 $, while $ 1 \leq q < \infty $ if $ N=2 $, and $ q=\infty $ if $ N=1 $.
Let $ q' \in (1,\infty] $ be the H\"{o}lder conjugate exponent of $ q $.

We define the indicator function $I_K : \R \to \R \cup \left\{ +\infty \right\}$ supported over a closed convex set $ K \subset \R $ as
\begin{equation}\label{eq:indicator}
	I_K (x) := \left\{ \,
	\begin{alignedat}{4}
		&0 &\quad &\mbox{ if } \ x \in K,\\
		&{+\infty} &&\mbox{ otherwise.}
	\end{alignedat}
	\right.
\end{equation}
For $1 \leq p \leq +\infty$, we define an extension of the indicator function onto $L^p(\Omega)$ by
\[
	I_K (f) := \left\{ \,
	\begin{alignedat}{4}
		&0 &\quad &\mbox{ if } \ f(x) \in K \quad \mbox{ for a.e.} \ x \in \Omega,\\
		&{+\infty} &&\mbox{ otherwise}
	\end{alignedat}
	\right.
\]
for $f \in L^p(\Omega)$.

\subsection{Sobolev spaces for mixed boundary conditions}\label{ssec:fs}

Let $ N \in \N $ and $ \Omega \subset \R^N $ be a bounded domain with smooth boundary $\dom$.
Let $ \ddom $ and $ \ndom $ be relatively open subsets of $ \dom $ such that $ \ddom \cap \ndom = \emptyset $ and $ \H^{N-1}(\dom \setminus (\ddom \cup \ndom))=0 $.
We set
\begin{align*}
	V &:= \{ \, u \in H^1(\Omega) \, \colon \, \Tr \, u=0 \; \; \H^{N-1} \mbox{-a.e.~on } \ddom \, \},\\
	X &:= \{ \, u \in H^2(\Omega) \, \colon \, \partial_\nu u=0 \; \; \H^{N-1} \mbox{-a.e.~on } \ndom \, \},
\end{align*}
where $ \Tr $ stands for the trace operator on $ \dom $, $\nu$ denotes the outer unit normal vector field on $\dom$ and $ \partial_\nu = \nu \cdot \Tr \, (\nabla u) $ denotes the outer normal derivative on $\dom$.
We set $ \| v \|_V := \| v \|_{H^1(\Omega)} $ and $ \| w \|_X := \| w \|_{H^2(\Omega)} $ for $ v \in V $ and $ w \in X $, respectively.
Moreover, we write $f \leq g$ in $V^*$ for $ f,g \in V^*$ if $\left\langle f, v \right\rangle_V \leq \left\langle g, v \right\rangle_V$ holds for any $v \in V$ satisfying $v \geq 0$ a.e.~in $\Omega$.

\subsection{Assumptions for the domain and its boundary}\label{ssec:reg}

We define an operator $B : V \to V^*$ as
\[
	\left\langle Bu,v \right\rangle_V := \int_\Omega \left( \nabla u \cdot \nabla v + uv \right) \d x \quad \mbox{ for } \, u,v \in V.
\]
Then $B$ is a bijection from $V$ to $V^*$, and hence $B^{-1} : V^* \to V$ is well defined.
Throughout this paper, we assume that
\begin{equation}\label{eq:ass}
	B^{-1} w \in H^2(\Omega) \cap V \quad \mbox{ for } \ w \in L^2(\Omega),
\end{equation}
which is the $L^2$-elliptic regularity condition to guarantee the $H^2$-regularity for the domain of the Laplacian equipped with the boundary condition \eqref{bcic}.

\begin{rem}\label{rem:ass}
	Under the assumption \eqref{eq:ass}, one can deduce that
	\[
		B^{-1} w \in X \cap V \quad \mbox{ for } \ w \in L^2(\Omega);
	\]
	indeed, let $ w \in L^2(\Omega) $ and $ u = B^{-1} w \in H^2(\Omega) \cap V $.
	Then thanks to Green's formula, for $ \psi \in V $, we have
	\begin{align*}
		\left\langle B u, \psi \right\rangle_{V} &= \int_\Omega \left( \nabla u \cdot \nabla \psi + u \psi \right) \d x = \int_{\ndom} \left( \partial_\nu u \right) \psi \, \d \H^{N-1} + \int_\Omega \left( -\Delta u + u \right) \psi \, \d x = \int_\Omega w \psi \, \d x.
	\end{align*}
	Hence it follows that $ -\Delta u + u = w $ a.e.~in $ \Omega $, and moreover, we obtain
	\begin{align*}
		\int_{\ndom} \left( \partial_\nu u \right) \psi \, \d \H^{N-1} = 0,
	\end{align*}
	which implies that $ \partial_\nu u=0 $ $ \H^{N-1} $-a.e.~on $ \ndom $.
	Therefore $ u \in X $.
\end{rem}

\subsection{Subdifferential operator}\label{ssec:subdif}

Let $H$ be a real Hilbert space equipped with an inner product $(\cdot,\cdot)_H$ and let $\vphi : H \to (-\infty, +\infty]$ be a proper, i.e., $\varphi \not\equiv +\infty$, lower-semicontinuous convex functional, whose \emph{effective domain} is defined by $D(\vphi) = \{ u \in H \colon \vphi(u) < +\infty \}$. The \emph{subdifferential operator} $\partial \vphi : H \to 2^H$ of $\varphi$ is defined by
$$
	\partial \vphi(u) := \left\{ \, \xi \in H \colon \vphi(v)-\vphi(u) \geq ( \xi, v-u )_H \quad \mbox{for } \ v \in D(\vphi) \, \right\} \ \mbox{ for } \ u \in D(\vphi)
$$
and $\partial \varphi(u) := \emptyset$ for $u \not\in D(\varphi)$ with domain $D(\partial \vphi) := \{ u \in H \colon \partial \vphi(u) \neq \emptyset \}$.
It is well known that $ \partial \vphi $ is maximal monotone in $H \times H$ (see, e.g.,~\cite{brezisF, rockafellar} for more details).

One can easily show that the subdifferential operator of the indicator function $ I_K $ (see \eqref{eq:indicator}) for an interval $ K := (-\infty,a] \subset \R $ and $ a \in \R $ is represented by
\[
	\partial I_{(-\infty, a]}(s) = \left\lbrace
	\begin{alignedat}{4}
		&\{ 0 \} &\quad &\mbox{ if } \ s \in (-\infty,a),\\
		&[0,+\infty) &&\mbox{ if } \ s = a,\\
		&\emptyset &&\mbox{ otherwise}
	\end{alignedat}
	\right. \quad \mbox{ for } \ s \in \R.
\]
In particular, we have $ t \in \partial I_{(-\infty, a]}(s) $ if and only if $ t \geq 0 $, $ s \leq a $ and $ t(s-a) = 0 $ hold.

\subsection{A chain-rule formula}\label{ssec:chain}

Define $ \vphi : L^2(\Omega) \to [0,+\infty] $ by
\begin{equation}\label{eq:phi}
	\vphi(v) := \left\{
	\begin{alignedat}{4}
		&\frac{1}{2} \int_\Omega \left| \nabla v(x) \right|^2 \d x &\quad &\mbox{ if } \ v \in V,\\
		&{+\infty} &&\mbox{ if } \ v \in L^2(\Omega) \setminus V
	\end{alignedat}
	\right.
	\quad \mbox{ for } \ v \in L^2(\Omega),
\end{equation}
where $ V $ is a Hilbert space defined in Subsection \ref{ssec:fs}.
It is known that $ \vphi $ is convex and lower-semicontinuous in $L^2(\Omega)$.
Moreover, if the elliptic regularity condition \eqref{eq:ass} is satisfied, then
\begin{equation*}
	D(\partial \vphi) = X \cap V \quad \mbox{ and } \quad \partial \vphi(v) = -\Delta v \quad \mbox{ for } \ v \in X \cap V,
\end{equation*}
where $ \partial \vphi $ and $ D(\partial \vphi) $ denote the subdifferential of $ \vphi $ and the domain of $ \partial \vphi $, respectively (see Subsection \ref{ssec:subdif}).
Furthermore, let $u \in W^{1,2}(0,T;L^2(\Omega)) \cap L^2(0,T;X \cap V)$.
Then a function
\[
	\vphi (u(t)) = \frac{1}{2} \int_\Omega \left| \nabla u(x,t) \right|^2 \d x, \quad \ t \in [0,T]
\]
is absolutely continuous on $[0,T]$ and complies with the chain-rule formula, i.e., it holds that
\begin{equation}\label{eq:chain}
	\frac{\d}{\d t} \, \vphi (u(t)) = \left( \partial_t u(t), - \Delta u(t) \right)_{L^2(\Omega)} \quad \mbox{ for a.e.~} t \in (0,T),
\end{equation}
where $ (\cdot, \cdot)_{L^2(\Omega)} $ denotes the inner product of $ L^2(\Omega) $ (see also \cite{barbu1976,brezisF,brezis1971,lm}, \cite{ak}*{Lemmas 3.1 and 3.4} and \cite{e2ciec}*{Lemma 2.1}).

\subsection{Nemytskii operator and Fr\'{e}chet derivative}\label{ssec:frechet}

Let $ \gamma : \R \to \R $ be a continuous function satisfying \eqref{ass:gammau} and let $ \hat{\gamma} $ denote the primitive function of $ \gamma $.
Then for $ u \in L^r(\Omega) $ with $ 1 \leq r < \infty $, the map $ L^r(\Omega) \to L^r(\Omega) $, $ u \mapsto \gamma(u) $ is well defined and continuous.
Let $ \rho \in L^p(\Omega) $, where $ p = N/2 $ if $ N \geq 3 $, $ p $ is an arbitrary exponent on $ (1,+\infty) $ if $ N=2 $ and $ p=1 $ if $ N=1 $.
Set
\[
	\Gamma (u) := \int_\Omega \rho \hat{\gamma}(u) \, \d x \quad \mbox{ for } \ u \in V.
\]
Then $ |\Gamma(u)| < +\infty $ by virtue of the embedding $ V \hookrightarrow L^q(\Omega) $ for the Sobolev critical exponent $ q $.
Furthermore, the functional $ u \mapsto \Gamma(u) $ is G\^{a}teaux differentiable in $ V $, and the derivative $ D_G \Gamma : V \to V^* $ is given by
\begin{align*}
	\left\langle D_\mathrm{G}\Gamma(u), v \right\rangle_V = \left\langle \rho \gamma(u), v \right\rangle_V = \int_\Omega \rho \gamma(u) v \, \d x \quad \mbox{ for } \ u,v \in V.
\end{align*}
Since $ D_\mathrm{G} \Gamma $ is continuous in $ V $, $ \Gamma $ turns out to be Fr\'{e}chet differentiable and it follows that $ \Gamma' = D_\mathrm{G} \Gamma $, where $ \Gamma' $ denotes the Fr\'{e}chet derivative of $ \Gamma $.
In particular, $ \Gamma $ is of class $ C^1 $ in $ V $.

\section{The existence of solutions}\label{sec:main}

This section is devoted to prove Theorem \ref{thm:main}.
To this end, we construct strong solutions of initial-boundary value problem \eqref{eq} and \eqref{bcic}.
Our proof is mainly based on the so-called minimizing movement scheme, i.e., it will be divided into three steps: first, we introduce time-discretization of \eqref{eq} and construct $ L^2 $-solutions of the discretized problems inductively (Theorem \ref{thm:dis}).
The second step is to derive some a priori estimates of interpolants of these solutions, which ensure that the interpolants are bounded in each functional space (Lemmas \ref{lem:zk}--\ref{lem:zkl} and \ref{lem:strong}).
Then, we obtain continuous limits of them by virtue of weak and weak-star compactness, and we confirm that the limit complies with (i)--(iii) of Definition \ref{defi:sol} (Lemma \ref{lem:identity}).

Before proceeding to a proof, we give some comments regarding Theorem \ref{thm:main}.

\begin{rem}\label{rem:ex}
	\begin{enumerate}
		\item Roughly speaking, \eqref{ass:gammal} implies that the slope of $ \gamma $ is bounded from below.
		In particular, if $ \gamma $ is Lipschitz continuous, \eqref{ass:gammal} and \eqref{ass:gammau} are fulfilled.
		\item The assumption (iii) of Theorem \ref{thm:main} can be regarded as an admissible condition for the initial data, and it is necessary for existence of strong solutions to \eqref{eq} and \eqref{bcic}.
		Indeed, if there exists a strong solution $z = z(x,t)$ to \eqref{eq} and \eqref{bcic}, it follows from (ii) of Definition \ref{defi:sol} that
		\[
			\Phi(t) := \int_\Omega \nabla z(t) \cdot \nabla v \, \d x + \int_\Omega \sigma(t) \gamma(z(t)) v \, \d x - \langle f(t), v\rangle_V \leq 0, \quad \mbox{ a.e.} \ t \in (0,T)
		\]
		for every $v \in V$ satisfying $ v \geq 0 $ a.e.~in $ \Omega $.
		On the other hand, $\Phi$ is continuous on $[0,T]$ since $z \in C([0,T];V)$ (see \cite{lm}) and $f \in W^{1,2}(0,T;V^*) \subset C([0,T];V^*)$.
		Hence $ \Phi(t) \leq 0 $ for all $t \in [0,T]$; in particular, it follows that $ \Phi(0) \leq 0 $.
		\item If $f$ belongs to $W^{1,1}(0,T;L^2(\Omega))$, (iv) of Theorem \ref{thm:main} holds true with $\tilde{f} \in L^2(\Omega)$ given by $ \tilde{f}(x) := f(x,0) - \int^T_0 \left| \partial_t f (x,s) \right| \d s $ for $ x \in \Omega $.
		Indeed, by virtue of the fundamental theorem of calculus, we see that
		\[
			f(x,t) = f(x,0) + \int^t_0 \partial_t f(x,s) \,\d s \geq \tilde{f}(x) \quad \mbox{ for a.e.~} (x,t) \in \Omega \times (0,T).
		\]
		On the other hand, the assumptions of $ f $ in Theorem \ref{thm:main} is strictly weaker than that $ f \in W^{1,1}(0,T;\Omega) $ (see \cite{e2ciec}*{Remark 3.1}).
	\end{enumerate}
\end{rem}

\subsection{Time-discretized problem}\label{ssec:dis}

Let $ T>0 $ and let $ \Omega \subset \R^N $ be a smooth bounded domain complying with \eqref{eq:ass}.
As in Theorem \ref{thm:main}, let $ \lambda > 0 $, $ z_0 \in X \cap V $, $ f \in L^2(0,T;L^2(\Omega)) \cap W^{1,2}(0,T;V^*) $ and $ \sigma \in L^\infty(\Omega \times (0,T)) \cap W^{1,2}(0,T;L^p(\Omega)) $.
Moreover, let $ \gamma: \R \to \R $ be a continuous function.
In what follows, suppose that {\rm (i)}--{\rm (v)} in Theorem \ref{thm:main} are all fulfilled.

We introduce time-discretized problems of \eqref{eq} and \eqref{bcic}.
For $ m \in \N $, set $ \tau := T/m $ and define $ \{ f_k \} $ and $ \{ \sigma_k \} $ by
\begin{alignat}{4}
	f_k &:= \frac{1}{\tau} \int^{t_k}_{t_{k-1}} f(\cdot,s) \, \d s \, \in L^2(\Omega) & \quad & \mbox{ for } \ k=1,2,\dots,m,\label{def:fk2}\\
	f_0 &:= f(\cdot,0) \, \in V^*,\label{def:f02}\\
	\sigma_k &:= \frac{1}{\tau} \int^{t_k}_{t_{k-1}} \sigma(\cdot,s) \, \d s \, \in L^\infty(\Omega) && \mbox{ for } \ k=1,2,\dots,m, \label{def:sigmak}\\
	\sigma_0 &:= \sigma(\cdot,0) \, \in L^p(\Omega). \label{def:sigma0}
\end{alignat}
For $ k=1,\dots,m $, let us consider the following elliptic obstacle problem:
\begin{equation}\label{vi:dis}
	\left\lbrace
	\begin{alignedat}{4}
		&z_k \leq z_{k-1}, \quad -\Delta z_k + \lambda z_k + \sigma_k \gamma(z_k) \leq f_k & \quad &\mbox{ in } \ \Omega,\\
		&(z_k - z_{k-1}) \left( -\Delta z_k + \lambda z_k + \sigma_k \gamma(z_k) - f_k \right) = 0 &&\mbox{ in } \ \Omega,
	\end{alignedat}
	\right.
\end{equation}
which is equivalent to the differential inclusion
\begin{equation}\label{eq:dis}
	\dind \left( \frac{z_k - z_{k-1}}{\tau} \right) - \Delta z_k + \lambda z_k + \sigma_k \gamma(z_k) \ni f_k \quad \mbox{ in } \, \Omega,
\end{equation}
equipped with the mixed boundary condition
\begin{equation}\label{bcic:dis}
	\left\lbrace
	\begin{alignedat}{4}
		&z_k = 0 &\quad &\mbox{ on } \, \ddom,\\
		&\partial_\nu z_k = 0 &&\mbox{ on } \, \ndom.
	\end{alignedat}
	\right.
\end{equation}
We define strong solutions to \eqref{eq:dis} and \eqref{bcic:dis} as follows:

\begin{defi}\label{defi:dis}
	For each $ k = 1, \dots, m $ and given $ z_{k-1} $, a function $ u \in L^2(\Omega) $ is called a \emph{strong solution} to \eqref{eq:dis} (or equivalently \eqref{vi:dis}) equipped with the boundary condition \eqref{bcic:dis} if the following (i)--(iii) are all satisfied:
	\begin{enumerate}\itemsep=1mm
		\item $ u \in X \cap V $;
		\item $ u \leq z_{k-1} \ $ and $ \ -\Delta u + \lambda u + \sigma_k \gamma(u) \leq f_k $\quad a.e.~in $ \, \Omega $;
		\item $ \left( u - z_{k-1} \right) \left( -\Delta u + \lambda u + \sigma_k \gamma(u) - f_k \right) = 0 $\quad a.e.~in $ \, \Omega$.
	\end{enumerate}
\end{defi}

\begin{thm}\label{thm:dis}
	Under the assumptions as in Theorem {\rm \ref{thm:main}}, there exists a unique strong solution $ z_k \in X \cap V $ to \eqref{eq:dis} and \eqref{bcic:dis} for each $ k=1,\dots,m $.
	Moreover, the so-called \emph{Lewy-Stampacchia estimate} holds true, i.e.,
	\begin{equation}\label{eq:ls}
		f_k \wedge \left( -\Delta z_{k-1} + \lambda z_{k-1} + \sigma_k \gamma(z_k) \right) \leq -\Delta z_k + \lambda z_k + \sigma_k \gamma (z_k) \leq f_k \quad \mbox{ a.e.~in } \ \Omega.
	\end{equation}
	Furthermore, the problem \eqref{eq:dis} and \eqref{bcic:dis} complies with comparison principle with respect to $ z_{k-1} $ and $ f $, i.e., let $ z_k^j $ be a solution to \eqref{eq:dis} and \eqref{bcic:dis} with $ z_{k-1} = z_{k-1}^j $ and $ f_k = f_k^j $ for $ j = 1,2 $.
	Then it holds that
	\[
		z_k^1 \leq z_k^2 \quad \mbox{ a.e.~in } \ \Omega,
	\]
	provided that
	\[
		z_{k-1}^1 \leq z_{k-1}^2 \quad \mbox{ and } \quad f_k^1 \leq f_k^2 \quad \mbox{ a.e.~in } \ \Omega.
	\]
\end{thm}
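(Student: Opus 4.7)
The plan is to prove existence by minimizing a suitable strictly convex functional, to upgrade the minimizer to $X \cap V$ by means of a Moreau-Yosida penalization of $\dind$, and then to extract the Lewy-Stampacchia estimate and the comparison principle from judicious testing.

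For existence, fix $z_{k-1} \in X \cap V$ and consider
\[
J_k(v) := \frac{1}{2}\int_\Omega |\nabla v|^2 \, \d x + \frac{\lambda}{2}\int_\Omega |v|^2 \, \d x + \int_\Omega \sigma_k \hat{\gamma}(v) \, \d x - \langle f_k, v \rangle_V
\]
on the non-empty closed convex set $K_k := \{v \in V \colon v \leq z_{k-1} \text{ a.e.~in } \Omega\}$. By virtue of \eqref{ass:gammal} the scalar map $s \mapsto \hat{\gamma}(s) + (L/2)s^2$ is convex, and since $\sigma_k \geq 0$ together with $\lambda_0 = \lambda - L\|\sigma\|_{L^\infty} > 0$ from \eqref{ass:sigma}, the integrand $\tfrac{\lambda}{2}s^2 + \sigma_k(x)\hat{\gamma}(s)$ admits the decomposition $\tfrac{\lambda - L\sigma_k(x)}{2}s^2 + \sigma_k(x)\bigl(\hat{\gamma}(s) + \tfrac{L}{2}s^2\bigr)$ as a strongly convex plus a convex term. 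Combined with the Dirichlet energy, $J_k$ is thus strictly convex, coercive and sequentially weakly lower-semicontinuous on $V$, which gives a unique minimizer $z_k \in K_k$ whose Euler-Lagrange inequality is precisely the weak form of \eqref{vi:dis}.

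To upgrade $z_k$ to $X \cap V$ and to derive the Lewy-Stampacchia estimate simultaneously, I would introduce the Yosida approximation $\beta_\varepsilon(s) := \varepsilon^{-1} s_+$ of $\dind$ and solve the semilinear equation
\[
\beta_\varepsilon(z^\varepsilon - z_{k-1}) - \Delta z^\varepsilon + \lambda z^\varepsilon + \sigma_k \gamma(z^\varepsilon) = f_k \quad \text{in } \Omega
\]
with boundary condition \eqref{bcic:dis}. Since $\beta_\varepsilon$ is monotone Lipschitz and $s \mapsto \lambda s + \sigma_k(x)\gamma(s)$ is strongly monotone thanks to $\lambda_0 > 0$, standard monotone-operator theory delivers a unique weak solution, which the elliptic regularity \eqref{eq:ass} upgrades to $X \cap V$. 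Testing the equation by $(\beta_\varepsilon(z^\varepsilon - z_{k-1}) - R_\varepsilon)_+$ with $R_\varepsilon := (f_k + \Delta z_{k-1} - \lambda z_{k-1} - \sigma_k \gamma(z^\varepsilon))_+$ and exploiting again the strong monotonicity yields the Lewy-Stampacchia estimate
\[
f_k \wedge \bigl(-\Delta z_{k-1} + \lambda z_{k-1} + \sigma_k \gamma(z^\varepsilon)\bigr) \leq -\Delta z^\varepsilon + \lambda z^\varepsilon + \sigma_k \gamma(z^\varepsilon) \leq f_k \quad \text{a.e.~in } \Omega
\]
uniformly in $\varepsilon$. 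Rearranging the penalized equation then furnishes a uniform $L^2$-bound on $\Delta z^\varepsilon$; extracting a weak limit in $H^2$ and using uniqueness from the first step to identify the limit with $z_k$ delivers $z_k \in X \cap V$ and \eqref{eq:ls} after invoking the continuity of $\gamma$ to pass to the limit in the nonlinear term.

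For the comparison principle, given solutions $z_k^j$ ($j=1,2$) corresponding to $z_{k-1}^1 \leq z_{k-1}^2$ and $f_k^1 \leq f_k^2$, the key observation is that $z_k^1 \wedge z_k^2$ is admissible in $K_k^1$ (since it lies below $z_k^1 \leq z_{k-1}^1$) while $z_k^1 \vee z_k^2$ is admissible in $K_k^2$ (since $z_k^1 \leq z_{k-1}^1 \leq z_{k-1}^2$ and $z_k^2 \leq z_{k-1}^2$). Testing the VI for $z_k^1$ with the former and the VI for $z_k^2$ with the latter, adding the resulting inequalities and invoking once more the coercivity afforded by $\lambda_0 > 0$ bounds $\|\nabla(z_k^1 - z_k^2)_+\|_{L^2}^2 + \lambda_0 \|(z_k^1 - z_k^2)_+\|_{L^2}^2$ by zero, forcing $(z_k^1 - z_k^2)_+ \equiv 0$; uniqueness for a single data set is then an immediate corollary. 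The main obstacle is securing the Lewy-Stampacchia estimate at the penalized level in the presence of the nonlinearity $\sigma_k \gamma(\cdot)$ — the compatibility of the test function $(\beta_\varepsilon - R_\varepsilon)_+$ with that nonlinearity requires the monotonicity induced by $\lambda_0 > 0$ — but once this uniform estimate is in hand, the passage to the limit and the comparison step are routine.
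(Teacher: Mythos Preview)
Your proposal is correct, and the existence step via direct minimization of $J_k$ over $K_k$ matches the paper exactly (including the convexity decomposition $\tfrac{\lambda}{2}s^2+\sigma_k\hat\gamma(s)=\tfrac{\lambda-L\sigma_k}{2}s^2+\sigma_k(\hat\gamma(s)+\tfrac{L}{2}s^2)$, which the paper records as Remark~\ref{rem:conv}). The remaining two parts, however, follow genuinely different routes.

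For the $X$-regularity and the Lewy--Stampacchia estimate, the paper does \emph{not} penalize. Instead, in Lemma~\ref{lem:mini} it introduces the auxiliary functional $\tilde J_k$ obtained by freezing the nonlinearity at the minimizer, $\tilde J_k(v)=\tfrac12\|\nabla v\|^2+\tfrac\lambda2\|v\|^2+\int_\Omega\sigma_k\gamma(u)v-\int_\Omega f_k v$, observes that $u$ also minimizes $\tilde J_k$ over $K[z_{k-1}]$, and then invokes the elliptic regularity for the \emph{linear} obstacle problem (citing~\cite{ak}) to get $u\in X$. The Lewy--Stampacchia bound is likewise obtained by quoting the linear result from~\cite{e2ciec} applied to $-\Delta z_k+\lambda z_k+\dind(z_k-z_{k-1})\ni f_k-\sigma_k\gamma(z_k)$ with the nonlinear term moved to the right-hand side. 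Your Yosida route is more self-contained and yields regularity and \eqref{eq:ls} in one stroke, at the cost of carrying out the penalized LS test with the $z^\varepsilon$-dependent obstacle term $R_\varepsilon$; the paper's freezing trick sidesteps precisely this technicality.

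For the comparison principle, the paper works at the level of the strong formulation: it subtracts the two equations, tests by $(z_k^1-z_k^2)_+$, and spends most of the effort showing directly that $(\eta_k^1-\eta_k^2,(z_k^1-z_k^2)_+)_{L^2}\ge 0$ from the pointwise relations $\eta_k^j\ge 0$, $z_k^j\le z_{k-1}^j$, $\eta_k^j(z_k^j-z_{k-1}^j)=0$. Your lattice argument (testing the VI for $z_k^1$ with $z_k^1\wedge z_k^2$ and that for $z_k^2$ with $z_k^1\vee z_k^2$) reaches the same coercivity inequality $\|\nabla(z_k^1-z_k^2)_+\|^2+\lambda_0\|(z_k^1-z_k^2)_+\|^2\le 0$ more quickly and avoids the $\eta$-bookkeeping altogether, though it requires the VI characterization rather than the strong one.
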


Before proving Theorem \ref{thm:dis}, we characterize a strong solution to \eqref{eq:dis} for each $ k $ as a minimizer of the following functional $ J_k $ over a convex subset of $ V $.
For $ k = 1, \dots, m $, define $ J_k : V \to \R$ by
\[
	J_k (u) := \frac{1}{2} \left\| \nabla u \right\|^2_{L^2(\Omega)} + \frac{\lambda}{2} \left\| u \right\|^2_{L^2(\Omega)} + \int_\Omega \sigma_k \hat{\gamma}(u) \, \d x - \int_\Omega f_k u \, \d x \quad \mbox{ for } \ u \in V,
\]
where we set $ \hat{\gamma}(s) := \int^s_0 \gamma(r) \, \d r $ for $ s \in \R $.
Set
\[
	K[v] := \{ \, u \in V \colon u \leq v \quad \mbox{a.e.~in } \, \Omega \, \}
\]
for $ v \in V $.
Moreover, let $ A : V \to V^* $ be a linear operator defined by
\[
	\left\langle Au, v \right\rangle_V := \int_\Omega \nabla u \cdot \nabla v \, \d x \quad \mbox{ for } \ u, v \in V.
\]
It is well known that the functional $ \mathcal{S}: V \to \R $ defined by
\[
	\mathcal{S}(u) = \frac{1}{2} \left\| \nabla u \right\|^2_{L^2(\Omega)} + \frac{\lambda}{2} \left\| u \right\|^2_{L^2(\Omega)}, \quad u \in V
\]
is of class $ C^1 $ in $ V $, and it satisfies that
\[
	\left\langle \mathcal{S}'(u), v \right\rangle_V = \left\langle Au + \lambda u, v \right\rangle_V = \int_\Omega \nabla u \cdot \nabla v \, \d x + \lambda \int_\Omega u v \, \d x \quad \mbox{ for } \ u,v \in V,
\]
where $ \mathcal{S}' $ denotes the Fr\'{e}chet derivative of $ \mathcal{S} $.

\begin{lem}[cf.~\cite{ak}]\label{lem:mini}
	Let $ u \in V $ and \( k = 1,\dots,m \).
	Then the following {\rm (i)--(v)} are equivalent to each other\/{\rm :}
	\begin{enumerate}
		\item The function $ u $ is a strong solution to \eqref{eq:dis} {\rm (}or equivalently \eqref{vi:dis}{\rm )} and \eqref{bcic:dis}{\rm ;}
		\item The function $ u $ satisfies $ u \in K[z_{k-1}] $ and minimizes $ J_k $ over $ K[z_{k-1}] $, i.e.,
			\[
				J_k(u) \leq J_k(v) \quad \mbox{ for all } \ v \in K[z_{k-1}];
			\]
		\item The function $ u $ satisfies $ u \in K[z_{k-1}] $ and
			\begin{equation}\label{eq:ue}
				\left\langle Au + \lambda u + \sigma_k \gamma(u) - f_k, \, v - u \right\rangle_V \geq 0 \quad \mbox{ for all } \ v \in K[z_{k-1}];
			\end{equation}
		\item The function $ u $ satisfies $ u \in K[z_{k-1}] $ and for any $ w \in V $,
			\begin{align*}
				&\left\langle Au + \lambda u + \sigma_k \gamma(u) - f_k, \, w \right\rangle_V \leq 0 \quad \mbox{ if } \ w \geq 0 \mbox{ a.e.~in } \ \Omega,\\
				&\left\langle Au + \lambda u + \sigma_k \gamma(u) - f_k, \, z_{k-1} - u \right\rangle_V = 0.
			\end{align*}
	\end{enumerate}
\end{lem}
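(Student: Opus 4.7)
The plan is to establish the circular chain (i) $\Rightarrow$ (ii) $\Rightarrow$ (iii) $\Rightarrow$ (iv) $\Rightarrow$ (i), invoking standard convex analysis together with the elliptic regularity hypothesis \eqref{eq:ass}. Observe first that $J_k$ is Fr\'echet differentiable on $V$ with $J_k'(u) = Au + \lambda u + \sigma_k \gamma(u) - f_k \in V^*$ (by Subsection \ref{ssec:frechet}), and is strictly convex thanks to assumption (i) of Theorem \ref{thm:main}: the semilinear part $u \mapsto (\lambda/2)\|u\|_{L^2(\Omega)}^2 + \int_\Omega \sigma_k \hat\gamma(u)\,\d x$ has modulus of convexity at least $\lambda_0/2 > 0$ because $(\gamma(t)-\gamma(s))(t-s) \geq -L|t-s|^2$ combined with $\lambda - L\|\sigma\|_{L^\infty(\Omega \times (0,T))} \geq \lambda_0$. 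Since $K[z_{k-1}]$ is a closed convex subset of $V$, the equivalence (ii) $\Leftrightarrow$ (iii) is then the textbook first-order condition for convex minimization over a convex set.

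For (iii) $\Leftrightarrow$ (iv), I would argue as follows. Given (iii), any $w \in V$ with $w \geq 0$ a.e.\ yields $v := u - w \in K[z_{k-1}]$, and testing (iii) against this $v$ gives the first inequality in (iv). Plugging $v = z_{k-1}$ and $v = 2u - z_{k-1}$ into (iii) (both lie in $K[z_{k-1}]$, since $u \leq z_{k-1}$ a.e.) and adding the two resulting inequalities produces the complementarity equality of (iv). Conversely, for arbitrary $v \in K[z_{k-1}]$ the decomposition $v - u = -(z_{k-1} - v) + (z_{k-1} - u)$ with $z_{k-1} - v \geq 0$ a.e.\ shows that the two assertions of (iv) together yield (iii).

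The equivalence (i) $\Leftrightarrow$ (iv) is the analytic core. For (i) $\Rightarrow$ (iv): given $u \in X \cap V$ satisfying the pointwise conditions of Definition \ref{defi:dis}, multiplying the a.e.\ inequality by a nonnegative $w \in V$, integrating, and applying Green's formula yields the first condition of (iv); the boundary terms vanish since $w = 0$ on $\ddom$ (as $w \in V$) and $\partial_\nu u = 0$ on $\ndom$ (as $u \in X$). Integrating the pointwise complementarity against $1$ and using Green's formula against $z_{k-1} - u \in V$ produces the equality in (iv). For the converse (iv) $\Rightarrow$ (i), the residual $\xi := f_k - Au - \lambda u - \sigma_k \gamma(u) \in V^*$ is nonnegative in the dual sense and annihilates $z_{k-1} - u$; this is the Lagrange-multiplier structure for the obstacle constraint. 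Standard regularity theory for elliptic variational inequalities under \eqref{eq:ass} (cf.~\cite{ak}) identifies $\xi$ with a nonnegative element of $L^2(\Omega)$, whence $Au \in L^2(\Omega)$ and Remark \ref{rem:ass} gives $u \in X \cap V$; another application of Green's formula then recasts the $V^*$-conditions of (iv) as the a.e.\ conditions of Definition \ref{defi:dis}. The main obstacle I expect is precisely this $L^2$-identification of $\xi$, usually achieved either by penalization (smoothing $\ind$, applying elliptic regularity at each level, and passing to the limit) or by a direct Lewy-Stampacchia comparison against the unconstrained elliptic solution.
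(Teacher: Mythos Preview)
Your outline is correct, and the logical skeleton matches the paper's. The one place where the paper does something more concrete than you is the regularity step, i.e.\ your implication (iv) $\Rightarrow$ (i) (the paper runs it as (iii) $\Rightarrow$ (i)). Rather than appealing to penalization or a Lewy--Stampacchia argument to show $\xi \in L^2(\Omega)$, the paper \emph{freezes} the nonlinearity: it introduces the auxiliary quadratic functional
\[
\tilde J_k(v) = \tfrac12\|\nabla v\|_{L^2}^2 + \tfrac{\lambda}{2}\|v\|_{L^2}^2 + \int_\Omega \sigma_k\gamma(u)\,v\,\d x - \int_\Omega f_k v\,\d x,
\]
observes that $\sigma_k\gamma(u) \in L^2(\Omega)$ by the linear growth \eqref{ass:gammau}, so the obstacle problem for $\tilde J_k$ over $K[z_{k-1}]$ is the \emph{linear} one treated in \cite{ak}, whose unique minimizer $\tilde u$ lies in $X$; but $J_k'(u)=\tilde J_k'(u)$, so $u$ also minimizes $\tilde J_k$ and hence $u=\tilde u\in X$. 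This sidesteps the penalization machinery you anticipated and is the cleaner route here. Your approach would also work, but the freezing trick is what makes the citation of \cite{ak} immediate.

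One small point in your (iii) $\Rightarrow$ (iv): you test against $v = 2u - z_{k-1}$ to get the reverse inequality for the complementarity identity. That is fine, but the paper instead first derives the sign condition $\langle J_k'(u),w\rangle_V \le 0$ for $w\ge 0$ and then applies it with $w = z_{k-1}-u \ge 0$, combining with the test $v=z_{k-1}$ in \eqref{eq:ue}; either route is valid.
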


\begin{rem}\label{rem:conv}
	In spite of the noncovexity of $ \gamma $, thanks to (i) of Theorem \ref{thm:main}, $ J_k $ turns out to be convex on $ V $ for $ k = 1, \dots, m $; indeed, it holds that
	\begin{align*}
		J_k(u) &= \frac{1}{2} \left\| \nabla u \right\|^2_{L^2(\Omega)} + \frac{\lambda}{2} \left\| u \right\|^2_{L^2(\Omega)} + \int_\Omega \sigma_k \hat{\gamma}(u) \, \d x - \int_\Omega f_k u \, \d x\\
		&= \frac{1}{2} \left\| \nabla u \right\|^2_{L^2(\Omega)} + \frac{1}{2} \int_\Omega \left( \lambda - L \sigma_k \right) \left| u \right|^2 \d x + \int_\Omega \sigma_k \left( \hat{\gamma}(u) + \frac{L}2 \left| u \right|^2 \right) \d x - \int_\Omega f_k u \, \d x
	\end{align*}
	for $ u \in V $.
	Here $ \lambda - L \sigma_k \geq \lambda_0 > 0 $ a.e.~in $ \Omega $ by virtue of \eqref{ass:sigma} and $ \R \ni s \mapsto \hat{\gamma}(s) + L|s|^2/2 $ is convex on $ \R $ by \eqref{ass:gammal}.
\end{rem}

\begin{proof}[Proof of Lemma {\rm \ref{lem:mini}}]
	Let $ u \in V $.
	First, we show that (ii) and (iii) are equivalent to each other.
	Assume (ii) and let $ v \in K[z_{k-1}] $ and $ \delta \in (0,1) $.
	Since $ K[z_{k-1}] $ is convex and $ J_k $ is of class $ C^1 $ (see Subsection \ref{ssec:chain}), we see that
	\begin{align*}
		0 \leq \frac{J_k((1-\delta)u+\delta v) - J_k(u)}{\delta} \to \left\langle D_\mathrm{G} J_k(u), v-u \right\rangle_V = \left\langle J_k'(u), v-u \right\rangle_V \quad \mbox{ as } \ \delta \to 0_+,
	\end{align*}
	where $ D_\mathrm{G} J_k $ and $ J_k' $ denote the G\^{a}teaux derivative and the Fr\'{e}chet derivative of $ J_k $, respectively.
	Therefore
	\begin{align*}
		\left\langle A u + \lambda u + \sigma_k \gamma(u) - f_k, v-u \right\rangle_V = \left\langle J_k'(u), v-u \right\rangle_V \geq 0,
	\end{align*}
	and thus (iii) follows.
	On the other hand, assume (iii) and fix $ v \in K[z_{k-1}] $.
	Since $ J_k $ is convex (see Remark \ref{rem:conv}), it follows from (iii) that
	\begin{align*}
		J_k(u) - J_k(v) \leq \left\langle J_k'(u), u-v \right\rangle_V = \left\langle A u + \lambda u + \sigma_k \gamma(u) - f_k, u-v \right\rangle_V \leq 0,
	\end{align*}
	which implies (ii).
	
	We next prove that (iii) is equivalent to (i).
	If (i) holds, i.e., $ u $ is a strong solution to \eqref{eq:dis}, then it follows that $ u \in X \cap K[z_{k-1}] $ and $ Au = -\Delta u \in L^2(\Omega) $.
	Fix $ v \in K[z_{k-1}] $.
	From the definition of strong solutions to \eqref{eq:dis}, it follows that
	\begin{alignat*}{4}
		\left( -\Delta u + \lambda u + \sigma_k \gamma(u) - f_k \right) \left( v - z_{k-1} \right) &\geq 0 & \quad &\mbox{ a.e.~in } \ \Omega,\\
		\left( -\Delta u + \lambda u + \sigma_k \gamma(u) - f_k \right) \left( u - z_{k-1} \right) &= 0 & \quad & \mbox{ a.e.~in } \ \Omega.
	\end{alignat*}
	Therefore,
	\begin{align*}
		\left\langle Au + \lambda u + \sigma_k \gamma(u) - f_k, v - u \right\rangle_V &= \int_\Omega \left( -\Delta u + \lambda u + \sigma_k \gamma(u) - f_k \right) \left( v - u \right) \d x\\
		&= \int_\Omega \left( -\Delta u + \lambda u + \sigma_k \gamma(u) - f_k \right) \left( v - z_{k-1} \right) \d x\\
		&\quad + \int_\Omega \left( -\Delta u + \lambda u + \sigma_k \gamma(u) - f_k \right) \left( z_{k-1} - u \right) \d x\\
		&= \int_\Omega \left( -\Delta u + \lambda u + \sigma_k \gamma(u) - f_k \right) \left( v - z_{k-1} \right) \d x \geq 0,
	\end{align*}
	and hence (iii) follows.
	Conversely, let us show that (iii) implies (i).
	Define an auxiliary functional $ \tilde{J}_k : V \to \R $ as
	\[
		\tilde{J}_k(v) := \frac{1}{2} \left\| \nabla v \right\|^2_{L^2(\Omega)} + \frac{\lambda}{2} \left\| v \right\|^2_{L^2(\Omega)} + \int_\Omega \sigma_k \gamma(u) v \, \d x - \int_\Omega f_k v \, \d x \quad \mbox{ for } \ v \in V,
	\]
	where $ u \in K[z_{k-1}] $ is the minimizer of $ J_k $ over $ K[z_{k-1}] $, and consider the minimizing problem of $ \tilde{J}_k $ over $ K[z_{k-1}] $.
	By the use of the direct method and the strictly convexity of $ \tilde{J}_k $, one can obtain a unique minimizer $ \tilde{u} \in K[z_{k-1}] $ of \( \tilde{J}_k \) over $ K[z_{k-1}] $, and moreover, an elliptic regularity theory implies that $ \tilde{u} \in X $ (see e.g., \cite{ak} for more details).
	However, (iii) and the fact that $ J_k'(u) = \tilde{J}_k'(u) $ in $ V^* $ imply that $ u $ is also a minimizer of $ \tilde{J}_k $ over $ K[z_{k-1}] $.
	Therefore from the uniqueness of the minimizer we obtain $ u = \tilde{u} \in X $.
	Thus we obtain $ Au = -\Delta u $ and, for any $ \vphi \in V $ with $ \vphi \geq 0 $, we deduce that
	\begin{align*}
		 \int_\Omega \left( -\Delta u + \lambda u + \sigma_k \gamma(u) - f_k \right) \vphi \, \d x =  \left\langle Au + \lambda u + \sigma_k \gamma(u) - f_k, \vphi \right\rangle_V \leq 0
	\end{align*}
	by taking $ v = u - \vphi \in K[z_{k-1}] $ in \eqref{eq:ue}.
	From the arbitrariness of $ \vphi $, we obtain
	\begin{equation}\label{eq:negative}
		-\Delta u + \lambda u + \sigma_k \gamma(u) - f_k \leq 0 \quad \mbox{ a.e.~in } \ \Omega.
	\end{equation}
	Furthermore, it follows that
	\begin{equation}\label{eq:zk-1}
		\int_\Omega \left( -\Delta u + \lambda u + \sigma_k \gamma(u) - f_k \right) \left( z_{k-1}-u \right) \d x = 0.
	\end{equation}
	Indeed, we find that the left-hand side of \eqref{eq:zk-1} is nonnegative by putting $ v = z_{k-1} $ in \eqref{eq:ue}, and also nonpositive from $ u \in K[z_{k-1}] $ and \eqref{eq:negative}.
	Therefore $ u $ is a strong solution to \eqref{eq:dis} and \eqref{bcic:dis}, and thus (i) follows.
	
	Finally let us prove that (iii) is equivalent to (iv).
	Assume (iii) and let $ w \in V $ satisfy $ w \geq 0 $ a.e.~in $ \Omega $.
	Putting $ v = u - w \in K[z_{k-1}] $ in \eqref{eq:ue}, we have
	\[
		\left\langle Au + \lambda u + \sigma_k \gamma(u) - f_k, \, w \right\rangle_V \leq 0.
	\]
	Thus it follows from \eqref{eq:ue} that
	\[
		\left\langle Au + \lambda u + \sigma_k \gamma(u) - f_k, \, z_{k-1} - u \right\rangle_V = 0,
	\]
	hence (iv) follows.
	On the other hand, assume (iv) and let $ v \in K[z_{k-1}] $.
	Then
	\begin{align*}
		\mel{\left\langle Au + \lambda u + \sigma_k \gamma(u) - f_k, v - u \right\rangle_V}\\
		&= \left\langle Au + \lambda u + \sigma_k \gamma(u) - f_k, v - z_{k-1} \right\rangle_V + \left\langle Au + \lambda u + \sigma_k \gamma(u) - f_k, z_{k-1} - u \right\rangle_V\\
		&= \left\langle Au + \lambda u + \sigma_k \gamma(u) - f_k, v - z_{k-1} \right\rangle_V \geq 0,
	\end{align*}
	and hence (iii) follows.
	This completes the proof.
\end{proof}

Now, we are in a position to prove Theorem \ref{thm:dis}.

\begin{proof}[Proof of Theorem {\rm \ref{thm:dis}}]
	Since $ J_k $ is convex, bounded from below and coercive in $ V $, there exists a sequence $ (u^j) \subset K[z_{k-1}]$ satisfying
	\begin{alignat*}{4}
		\disp \lim_{j \to \infty} J_k(u^j) &= \inf_{v \in K[z_{k-1}]} J_k(v) > - \infty \quad \mbox{ and } \quad \disp \| u^j \|_V &\leq C \quad \mbox{ for } \ j \in \N.
	\end{alignat*}
	By virtue of the compact embedding $ V \included L^2(\Omega) $, the reflexivity of $ V $ and the weak closedness of $ K[z_{k-1}] $ in $ V $, there exist a (not relabeled) subsequence of $ (u^j) $ and $ z_k \in K[z_{k-1}] $ such that
	\[
		u^j \wto z_k \quad \mbox{ weakly in } \ V \quad \mbox{ and } \quad u^j \to z_k \quad \mbox{ strongly in } \ L^2(\Omega).
	\]
	Furthermore, one can deduce that $ z _k $ is a minimizer of $ J_k $ over $ K[z_{k-1}] $ from the weak lower semicontinuity in $ V $ of $ J_k $; indeed,
	\begin{align*}
		J_k(z_k) \leq \liminf_{j \to \infty} J_k(u^j) = \inf_{v \in K[z_{k-1}]} J_k(v).
	\end{align*}
	Therefore, thank to Lemma \ref{lem:mini}, $ z_k $ is a strong solution to \eqref{eq:dis}.
	Moreover, the uniqueness follows from the strictly convexity of $ J_k $.
	Furthermore, since \( z_k \) is a solution to \eqref{eq:dis} and \eqref{bcic:dis}, and \( \sigma_k \gamma(z_k) \in L^2(\Omega) \), by virtue of the Lewy-Stampacchia estimate demonstrated in \cite{e2ciec}, we can deduce that
	\[
		\left( f_k - \sigma_k \gamma(z_k) \right) \wedge \left( -\Delta z_{k-1} + \lambda z_{k-1} \right) \leq -\Delta z_k + \lambda z_k \leq f_k - \sigma_k \gamma(z_k) \quad \mbox{ a.e.~in } \ \Omega
	\]
	for each \( k=1,\dots,m \).
	
	In what follows, we prove the comparison principle.
	Fix $ 1 \leq k \leq m $ and let $ z^j_k \in X \cap V $ be a solution to \eqref{eq:dis} and \eqref{bcic:dis} with $ z_{k-1} = z^j_{k-1} $ and $ f_k = f^j_k $ for $ j=1,2 $.
	Assume that $ z^1_{k-1} \leq z^2_{k-1} $ and $ f^1_k \leq f^2_k $ a.e.~in $ \Omega $ and set
	\[
		\eta^j_k := \Delta z^j_k - \lambda z^j_k - \sigma_k \gamma(z^j_k) + f^j_k \in \dind \, (z^j_k - z^j_{k-1})
	\]
	for $ j=1,2 $.
	Noting that
	\[
		\eta^1_k - \eta^2_k - \Delta (z^1_k - z^2_k) + \lambda (z^1_k - z^2_k) + \sigma_k (\gamma(z^1_k) - \gamma(z^2_k)) = f^1_k - f^2_k
	\]
	and testing it by $ (z^1_k - z^2_k)_+ $, we have
	\begin{align}
		\mel{\left( \eta^1_k - \eta^2_k, (z^1_k - z^2_k)_+ \right)_{L^2(\Omega)} + \left( -\Delta (z^1_k - z^2_k), (z^1_k - z^2_k)_+ \right)_{L^2(\Omega)}} \notag \\
		&\quad + \lambda \left( z^1_k - z^2_k, (z^1_k - z^2_k)_+ \right)_{L^2(\Omega)} + \left( \sigma_k (\gamma(z^1_k) - \gamma(z^2_k)), (z^1_k - z^2_k)_+ \right)_{L^2(\Omega)}\notag \\
		&= \left( f^1_k - f^2_k, (z^1_k - z^2_k)_+ \right)_{L^2(\Omega)},\label{eq:positive}
	\end{align}
	where $ (z^1_k - z^2_k)_+ $ denotes the positive part of $ z^1_k - z^2_k $.
	Set $ [z^1_k \geq z^2_k] := \{ \, x \in \Omega \colon z^1_k(x) \geq z^2_k(x) \, \} $.
	Since
	\begin{align*}
		\eta^j_k \geq 0, \quad z^j_k \leq z^j_{k-1} \quad \mbox{ and } \quad \eta^j_k \, (z^j_k - z^j_{k-1}) = 0 \quad \mbox{ a.e.~in } \ \Omega
	\end{align*}
	for $ j=1,2 $, we deduce that
	\begin{align*}
		( \eta^1_k - \eta^2_k, (z^1_k - z^2_k)_+ )_{L^2(\Omega)} &= \int_{[z^1_k \geq z^2_k]} ( \eta^1_k - \eta^2_k ) ( z^1_k - z^2_k ) \, \d x\\
		&= \int_{[z^1_k \geq z^2_k]} \eta^1_k \, ( z^1_k - z^1_{k-1} ) \, \d x - \int_{[z^1_k \geq z^2_k]} \eta^2_k \, ( z^1_k - z^1_{k-1} ) \, \d x\\
		&\quad - \int_{[z^1_k \geq z^2_k]} \eta^1_k \, ( z^2_k - z^1_{k-1} ) \, \d x + \int_{[z^1_k \geq z^2_k]} \eta^2_k \, ( z^2_k - z^1_{k-1} ) \, \d x\\
		&\geq - \int_{[z^1_k \geq z^2_k]} \eta^1_k \, ( z^2_k - z^1_{k-1} ) \, \d x + \int_{[z^1_k \geq z^2_k]} \eta^2_k \, ( z^2_k - z^1_{k-1} ) \, \d x\\
		&\geq - \int_{[z^1_k \geq z^2_k]} \eta^1_k \, ( z^1_k - z^1_{k-1} ) \, \d x + \int_{[z^1_k \geq z^2_k]} \eta^2_k \, ( z^2_k - z^2_{k-1} ) \, \d x = 0,
	\end{align*}
	and by assumption,
	\[
		\left( f^1_k - f^2_k, (z^1_k - z^2_k)_+ \right)_{L^2(\Omega)} \leq 0.
	\]
	Set $ \beta(s) = \gamma(s) + Ls $ for $ s \in \R $.
	Then $ \beta $ turns out to be monotone by virtue of \eqref{ass:gammal}, and thus we have
	\begin{align*}
		\mel{\left( \sigma_k (\gamma(z^1_k) - \gamma(z^2_k)), (z^1_k - z^2_k)_+ \right)_{L^2(\Omega)}}\\
		&= \int_{[z^1_k \geq z^2_k]} \sigma_k \, (\beta(z^1_k) - \beta(z^2_k)) (z^1_k - z^2_k) \, \d x -L \int_{[z^1_k \geq z^2_k]} \sigma_k \, |z^1_k - z^2_k|^2 \, \d x\\
		&\geq -L \int_{[z^1_k \geq z^2_k]} \sigma_k \, |z^1_k - z^2_k|^2 \, \d x.
	\end{align*}
	Therefore one can deduce from \eqref{eq:positive} that
	\[
		\int_{[z^1_k \geq z^2_k]} \left| \nabla \left( z^1_k - z^2_k \right) \right|^2 \d x + \int_{[z^1_k \geq z^2_k]} \left( \lambda - L \sigma_k \right) |z^1_k - z^2_k|^2 \, \d x \leq 0.
	\]
	Moreover, it follows from \eqref{ass:sigma} that
	\[
		\left\| (z^1_k - z^2_k) \, \chi_{[z^1_k \geq z^2_k]} \right\|^2_{L^2(\Omega)} = 0.
	\]
	Thus we obtain $ z^1_k = z^2_k $ a.e.~in $ [z^1_k \geq z^2_k] $, which implies that $ z^1_k \leq z^2_k $ a.e.~in $ \Omega $.
	This completes the proof.
\end{proof}

\subsection{A priori estimates}\label{ssec:est}

Owing to Theorem \ref{thm:dis}, the discretized equation \eqref{eq:dis} admits a unique strong solution $ z_k \in X \cap V $ for each $ k = 1, \dots, m $.
In this subsection, we establish some a priori estimates of $ \{ z_k \} $ and derive some compactness results.

\begin{lem}\label{lem:zk}
	It holds that
	\begin{equation}\label{est:zk}
		\left\| z_k \right\|_V \leq \phi \left( \left\| \partial_t f \right\|_{L^1(0,T;V^*)} + \left\| \partial_t \sigma \right\|_{L^1(0,T;L^p(\Omega))} \right) \quad \mbox{ for } \ k = 1, \dots, m,
	\end{equation}
	where $ \phi : \R \to \R $ is defined by
	\begin{equation}\label{eq:defphi}
		\phi(s) := C(\alpha + s) e^{Cs} \quad \mbox{ for } \ s \in \R,
	\end{equation}
	and $ \alpha > 0 $ is a constant given by
	\begin{equation}\label{eq:defalpha}
		\alpha := \left( \left\| \sigma_0 \right\|_{L^p(\Omega)} + 1 \right) \left\| z_0 \right\|^2_V + \sup_{t \in [0,T]} \left( \left\| \sigma(t) \right\|^2_{L^p(\Omega)} + \left\| f(t) \right\|^2_{V^*} \right) + 1.
	\end{equation}
	Moreover, the constant $ C \geq 1 $ in \eqref{eq:defphi} may depend on $ C_1 $, $ L $ and $ \lambda_0 $ {\rm (}see \eqref{ass:gammal}--\eqref{ass:gammau}{\rm )}, however, it is independent of $ k $, $ \tau $ and $ T $.
\end{lem}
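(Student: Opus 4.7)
The plan is to exploit the variational characterization of $z_k$ from Lemma \ref{lem:mini}: since $z_{k-1} \in K[z_{k-1}]$, one has $J_k(z_k) \leq J_k(z_{k-1})$. Setting $a_k := J_k(z_k)$, a direct subtraction (in which the quadratic and gradient parts cancel) yields the one-step inequality
\[
	a_k - a_{k-1} \leq \int_\Omega (\sigma_k - \sigma_{k-1})\hat{\gamma}(z_{k-1})\,\d x - \langle f_k - f_{k-1}, z_{k-1}\rangle_V.
\]
Summing over $k=1,\dots,n$ reduces the problem to (i) bounding $a_n$ below by a multiple of $\|z_n\|_V^2$, (ii) bounding $a_0$ above in terms of $\alpha$, and (iii) controlling the telescoping remainder.

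For (i), I would use the convex rewriting of $J_n$ from Remark \ref{rem:conv}. The bound \eqref{ass:sigma} gives $\lambda - L\sigma_n \geq \lambda_0 > 0$ a.e., while convexity of $s \mapsto \hat{\gamma}(s) + L|s|^2/2$ (consequence of \eqref{ass:gammal}) yields the tangent-line inequality $\hat{\gamma}(s) + L|s|^2/2 \geq \gamma(0)s$. Combined with the Sobolev embedding $V \included L^{p'}(\Omega)$ (which holds for the choice of $p$ stated in the hypothesis) and Young's inequality, this produces
\[
	a_n \geq c_0 \|z_n\|_V^2 - C\bigl(\|\sigma_n\|_{L^p(\Omega)}^2 + \|f_n\|_{V^*}^2\bigr)
\]
for some $c_0 > 0$. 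For (ii), the growth estimate $|\hat{\gamma}(s)| \leq C(|s|^2 + |s|)$ from \eqref{ass:gammau}, together with the embeddings $V \included L^{2p'}(\Omega) \cap L^{p'}(\Omega)$, gives $a_0 \leq C\alpha$.

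For (iii), the differences of the piecewise averages can be written as double integrals of $\partial_t \sigma$ and $\partial_t f$; in particular
\[
	\|\sigma_k - \sigma_{k-1}\|_{L^p(\Omega)} \leq \int_{(t_{k-2})_+}^{t_k} \|\partial_t \sigma(r)\|_{L^p(\Omega)}\,\d r,
\]
and analogously for $f_k - f_{k-1}$ in $V^*$. Thus $\sum_{k=1}^n \|\sigma_k - \sigma_{k-1}\|_{L^p(\Omega)} \leq 3\|\partial_t \sigma\|_{L^1(0,T;L^p(\Omega))}$ and a similar bound holds for $f$. Plugging the growth estimate $\|\hat{\gamma}(z_{k-1})\|_{L^{p'}(\Omega)} \leq C(\|z_{k-1}\|_V^2 + 1)$ into the remainder, and setting $A_k := \|z_k\|_V^2$, $S := \|\partial_t f\|_{L^1(0,T;V^*)} + \|\partial_t \sigma\|_{L^1(0,T;L^p(\Omega))}$, $d_k := \|\sigma_k-\sigma_{k-1}\|_{L^p(\Omega)} + \|f_k - f_{k-1}\|_{V^*}$, I obtain
\[
	A_n \leq C(\alpha + S) + C \sum_{k=1}^n d_k A_{k-1},\qquad \sum_{k=1}^n d_k \leq CS.
\]
A discrete Gronwall inequality then yields $A_n \leq C(\alpha + S)e^{CS}$. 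Taking square roots and using $\alpha \geq 1$ (so that $\sqrt{\alpha+S} \leq 1 + (\alpha+S) \leq C(\alpha+S)$) gives the claimed estimate $\|z_n\|_V \leq \phi(S)$.

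The main obstacle is step (i): the potentially indefinite term $\int_\Omega \sigma_n \hat{\gamma}(z_n)\,\d x$ has no fixed sign, and must be controlled using both the quantitative smallness condition \eqref{ass:sigma} and the convexity of $\hat{\gamma} + L(\cdot)^2/2$. The other delicate point is dimensional: one must verify that $p = \max\{1, N/2\}$ (or $p>1$ if $N=2$) is precisely the threshold ensuring $V \included L^{p'}(\Omega) \cap L^{2p'}(\Omega)$, so that H\"older's inequality bounds $\int \sigma \hat{\gamma}(z)$ by $\|\sigma\|_{L^p(\Omega)}(\|z\|_V^2 + 1)$.
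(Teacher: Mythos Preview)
Your proposal is correct and follows essentially the same approach as the paper. The only difference is presentational: you invoke the minimization inequality $J_k(z_k)\le J_k(z_{k-1})$ directly from Lemma~\ref{lem:mini}, whereas the paper derives the same one-step energy inequality by testing the strong form \eqref{eq:dis} with $z_l-z_{l-1}$ and applying convexity term by term (splitting $\gamma=\beta-Ls+\gamma(0)$); the subsequent telescoping, lower and upper bounds on the energy, and the discrete Gronwall step are identical.
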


\begin{proof}
	Set \( \eta_l := \Delta z_l - \lambda z_l - \sigma_l \gamma(z_l) + f_l \in \dind \left( z_l - z_{l-1} \right) \) for \( 1 \leq k \leq m \).
	Testing it by \( z_l - z_{l-1} \), we see that
	\begin{align}
		\mel{\int_\Omega \eta_l \left( z_l - z_{l-1} \right) \d x + \int_\Omega \nabla z_l \cdot \nabla \left( z_l - z_{l-1} \right) \d x + \lambda \int_\Omega z_l \left( z_l - z_{l-1} \right) \d x}\notag\\
		&+ \int_\Omega \sigma_l \gamma(z_l) (z_l - z_{l-1}) \, \d x = \int_\Omega f_l \left( z_l - z_{l-1} \right) \d x.\label{eq:zk}
	\end{align}
	Since \( \eta_l \in \dind(z_l - z_{l-1}) \), the first term of the left-hand side is \( 0 \).
	Set $ \gamma_0 := \gamma(0) $, \( \beta(s) := \gamma(s) - \gamma_0 + Ls \) and $ \hat{\beta}(s) = \int^s_0 \beta(r) \, \d r $ for \( s \in \R \).
	Then \( \beta \) is monotone and \( \hat\beta \) is convex in \( \R \), and moreover, $ \beta(0)=\hat{\beta}(0)=0 $ and $ \hat{\beta}(s) \geq 0 $ for any $ s \in \R $.
	Therefore it follows that
	\begin{align*}
		\mel{\int_\Omega \sigma_l \gamma (z_l) (z_l - z_{l-1}) \, \d x}\\
		&= \int_\Omega \sigma_l \beta(z_l) (z_l - z_{l-1}) \, \d x + \gamma_0 \int_\Omega \sigma_l \left( z_l - z_{l-1} \right) \d x - L \int_\Omega \sigma_l z_l \left( z_l - z_{l-1} \right) \d x\\
		&\geq \int_\Omega \sigma_l (\hat{\beta}(z_l) - \hat{\beta}(z_{l-1})) \, \d x + \gamma_0 \int_\Omega \sigma_l \left( z_l - z_{l-1} \right) \d x - L \int_\Omega \sigma_l z_l \left( z_l - z_{l-1} \right) \d x\\
		&= \int_\Omega \big( \sigma_l \hat{\beta}(z_l) - \sigma_{l-1} \hat{\beta}(z_{l-1}) \big) \, \d x - \int_\Omega \left( \sigma_l - \sigma_{l-1} \right) \hat{\beta}(z_{l-1}) \, \d x\\
		&\quad + \gamma_0 \left( \int_\Omega (\sigma_l z_l - \sigma_{l-1} z_{l-1}) \, \d x - \int_\Omega \left( \sigma_l - \sigma_{l-1} \right) z_{l-1} \, \d x \right) - L \int_\Omega \sigma_l z_l \left( z_l - z_{l-1} \right) \d x.
	\end{align*}
	Furthermore, since $ \lambda - L \sigma_l \geq 0 $ a.e.~in $ \Omega $ and by the convexity, we note that
	\begin{align*}
		\mel{\int_\Omega \left( \lambda - L \sigma_l \right) z_l \left( z_l - z_{l-1} \right) \d x}\\
		&\geq \frac{1}{2} \left( \int_\Omega \left( \lambda - L \sigma_l \right) \left| z_l \right|^2 \d x - \int_\Omega \left( \lambda - L \sigma_l \right) \left| z_{l-1} \right|^2 \d x \right)\\
		&= \frac{1}{2} \left( \int_\Omega \left( \lambda - L \sigma_l \right) \left| z_l \right|^2 \d x - \int_\Omega \left( \lambda - L \sigma_{l-1} \right) \left| z_{l-1} \right|^2 \d x + L \int_\Omega \left( \sigma_l - \sigma_{l-1} \right) \left| z_{l-1} \right|^2 \d x \right).
	\end{align*}
	Therefore one can deduce from \eqref{eq:zk} that
	\begin{align}
		\mel{\frac{1}{2} \left( \left\| \nabla z_l \right\|^2_{L^2(\Omega)} - \left\| \nabla z_{l-1} \right\|^2_{L^2(\Omega)} \right) + \frac{1}{2} \left( \int_\Omega \left( \lambda - L \sigma_l \right) \left| z_l \right|^2 \d x - \int_\Omega \left( \lambda - L \sigma_{l-1} \right) \left| z_{l-1} \right|^2 \d x \right)}\notag\\
		&\quad + \frac{L}{2} \int_\Omega \left( \sigma_l - \sigma_{l-1} \right) \left| z_{l-1} \right|^2 \d x + \int_\Omega \big( \sigma_l \hat{\beta}(z_l) - \sigma_{l-1} \hat{\beta}(z_{l-1}) \big) \, \d x \notag\\
		&\quad - \int_\Omega \left( \sigma_l - \sigma_{l-1} \right) \hat{\beta}(z_{l-1}) \, \d x + \gamma_0 \left( \int_\Omega (\sigma_l z_l - \sigma_{l-1} z_{l-1}) \, \d x - \int_\Omega \left( \sigma_l - \sigma_{l-1} \right) z_{l-1} \, \d x \right) \notag\\
		&\leq \int_\Omega (f_l z_l - f_{l-1} z_{l-1}) \, \d x - \int_\Omega (f_l - f_{l-1}) z_{l-1} \, \d x. \label{eq:beta}
	\end{align}
	Let $ 1 \leq k \leq m $.
	By summing up \eqref{eq:beta} from $ l=1 $ to $ k $, we derive that
	\begin{align*}
		\mel{\frac{1}{2} \left( \left\| \nabla z_k \right\|^2_{L^2(\Omega)} + \int_\Omega \left( \lambda - L \sigma_k \right) \left| z_k \right|^2 \d x \right) + \int_\Omega \sigma_k \hat{\beta}(z_k) \, \d x + \gamma_0 \int_\Omega \sigma_k z_k \, \d x}\\
		&\leq \frac{1}{2} \left( \left\| \nabla z_0 \right\|^2_{L^2(\Omega)} + \int_\Omega \left( \lambda - L \sigma_0 \right) \left| z_0 \right|^2 \d x \right) + \int_\Omega \sigma_0 \hat{\beta}(z_0) \, \d x + \gamma_0 \int_\Omega \sigma_0 z_0 \, \d x\\
		&\quad + \left\langle f_k, z_k \right\rangle_V - \left\langle f_0, z_0 \right\rangle_V - \tau \sum^k_{l=1} \left\{ \frac{L}{2} \int_\Omega \frac{\sigma_l - \sigma_{l-1}}{\tau} \left| z_{l-1} \right|^2 \d x - \int_\Omega \frac{\sigma_l - \sigma_{l-1}}{\tau} \, \hat{\beta}(z_{l-1}) \, \d x \right. \notag\\
		&\quad - \left. \gamma_0 \int_\Omega \frac{\sigma_l-\sigma_{l-1}}{\tau} \, z_{l-1} \, \d x + \left\langle \frac{f_l - f_{l-1}}{\tau}, z_{l-1} \right\rangle_V\, \right\}.
	\end{align*}
	Moreover, since $ \hat{\beta} \geq 0 $ and
	\[
		\hat{\beta}(s) \leq C (s^2+1) \quad \mbox{ for } \ s \in \R
	\]
	(see \eqref{ass:gammau}), one can deduce that
	\begin{align*}
		\mel{\frac{1}{2} \left( \left\| \nabla z_k \right\|^2_{L^2(\Omega)} + \int_\Omega \left( \lambda - L \sigma_k \right) \left| z_k \right|^2 \d x \right) + \gamma_0 \int_\Omega \sigma_k z_k \, \d x - \left\langle f_k, z_k \right\rangle_V}\\
		&\leq C \left\{ (\left\| \sigma_0 \right\|_{L^p(\Omega)} + 1) \left\| z_0 \right\|^2_V + \left\| \sigma_0 \right\|^2_{L^{q'}(\Omega)} + \left\| f_0 \right\|^2_{V^*} + 1 \right\}\\
		&\quad + C \tau \sum^k_{l=1} \left( \left\| \frac{f_l - f_{l-1}}{\tau} \right\|_{V^*} + \left\| \frac{\sigma_l-\sigma_{l-1}}{\tau} \right\|_{L^p(\Omega)} \right) \left( 1 + \left\| z_{l-1} \right\|^2_V \right).
	\end{align*}
	Furthermore, it follows from \eqref{ass:sigma} that
	\[
		\left\| z_k \right\|^2_V \leq C \left( \left\| \nabla z_k \right\|^2_{L^2(\Omega)} + \int_\Omega \left( \lambda - L \sigma_k \right) \left| z_k \right|^2 \d x \right).
	\]
	Therefore we obtain
	\begin{align*}
		\left\| z_k \right\|^2_V &\leq C \, \Big\{ (\left\| \sigma_0 \right\|_{L^p(\Omega)} + 1) \left\| z_0 \right\|^2_V + \left\| \sigma_0 \right\|^2_{L^{q'}(\Omega)} + \left\| f_0 \right\|^2_{V^*}\\
		&\quad + \left. \left\| \sigma_k \right\|^2_{L^{q'}(\Omega)} + \left\| f_k \right\|^2_{V^*} + 1 + \tau \sum^m_{l=1} \left( \left\| \frac{f_l - f_{l-1}}{\tau} \right\|_{V^*} + \left\| \frac{\sigma_l-\sigma_{l-1}}{\tau} \right\|_{L^p(\Omega)} \right) \right\}\\
		&\quad + C \tau \sum^k_{l=1} \left( \left\| \frac{f_l - f_{l-1}}{\tau} \right\|_{V^*} + \left\| \frac{\sigma_l-\sigma_{l-1}}{\tau} \right\|_{L^p(\Omega)} \right) \left\| z_{l-1} \right\|^2_V.
	\end{align*}
	Using a Gronwall inequality of discrete type (see \cite{clark, corduneanu} for example), one can deduce that
	\begin{align*}
		\left\| z_k \right\|^2_V
		&\leq C \left( \alpha + \tau \sum^m_{l=1} \left( \left\| \frac{f_l - f_{l-1}}{\tau} \right\|_{V^*} + \left\| \frac{\sigma_l-\sigma_{l-1}}{\tau} \right\|_{L^p(\Omega)} \right) \right)\\
		&\quad \cdot \exp \left( C \tau \sum^m_{l=1} \left( \left\| \frac{f_l - f_{l-1}}{\tau} \right\|_{V^*} + \left\| \frac{\sigma_l-\sigma_{l-1}}{\tau} \right\|_{L^p(\Omega)} \right) \right)\\
		&= \phi \left( \tau \sum^m_{l=1} \left( \left\| \frac{f_l - f_{l-1}}{\tau} \right\|_{V^*} + \left\| \frac{\sigma_l-\sigma_{l-1}}{\tau} \right\|_{L^p(\Omega)} \right) \right)\\
		&\leq \phi \left( \left\| \partial_t f \right\|_{L^1(0,T;V^*)} + \left\| \partial_t \sigma \right\|_{L^1(0,T;L^p(\Omega))} \right)
	\end{align*}
	(see \eqref{eq:defphi} and \eqref{eq:defalpha} for the definition of $ \phi $).
	Therefore \eqref{est:zk} has been obtained.
\end{proof}

Let $ \phi_{f,\sigma,T} := \phi \, (\left\| \partial_t f \right\|_{L^1(0,T;V^*)} + \left\| \partial_t \sigma \right\|_{L^1(0,T;L^p(\Omega))}) \geq 1 $.
Then we further prove that

\begin{lem}\label{lem:zkd}
	There exists a constant $ C>0 $ such that
	\begin{equation}\label{est:zkd}
		\left\| \frac{z_k - z_{k-1}}{\tau} \right\|^2_V \leq C \phi_{f,\sigma,T}^2 \left( \left\| \frac{\sigma_k - \sigma_{k-1}}{\tau} \right\|^2_{L^p(\Omega)} + \left\| \frac{f_k - f_{k-1}}{\tau} \right\|^2_{V^*} \right)
	\end{equation}
	for all $ k = 1,\dots,m $.
	Moreover, the constant $ C $ is independent of $ k $, $ \tau $ and $ T $.
\end{lem}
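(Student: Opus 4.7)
The plan is to take the difference of the Euler--Lagrange equations at two consecutive steps and test against $z_k - z_{k-1}$ to recover its full $V$-norm. Writing the discretized equation at level $l$ as
\[
\eta_l - \Delta z_l + \lambda z_l + \sigma_l \gamma(z_l) = f_l, \qquad \eta_l \in \dind(z_l - z_{l-1}),
\]
and splitting $\sigma_k \gamma(z_k) - \sigma_{k-1}\gamma(z_{k-1}) = \sigma_k(\gamma(z_k) - \gamma(z_{k-1})) + (\sigma_k - \sigma_{k-1})\gamma(z_{k-1})$, subtraction yields an equation for $z_k - z_{k-1}$ with right-hand side $(f_k - f_{k-1}) - (\sigma_k - \sigma_{k-1})\gamma(z_{k-1})$.

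The crucial observation is that $\int_\Omega (\eta_k - \eta_{k-1})(z_k - z_{k-1}) \, \d x \geq 0$: the complementarity condition gives $\eta_k(z_k - z_{k-1}) = 0$ a.e., while $\eta_{k-1} \geq 0$ combined with the obstacle constraint $z_k \leq z_{k-1}$ yields $-\eta_{k-1}(z_k - z_{k-1}) \geq 0$. For $k = 1$, I would set $\eta_0 := f_0 + \Delta z_0 - \lambda z_0 - \sigma_0 \gamma(z_0)$, which is nonnegative a.e.~by assumption (iii) of Theorem \ref{thm:main}. After testing the difference equation against $z_k - z_{k-1} \in V$, integrating by parts (boundary terms vanish by the mixed conditions imposed on both $z_k$ and $z_{k-1}$), and applying assumption (i) of Theorem \ref{thm:main} to bound the $\gamma$-term from below by $-L\|\sigma\|_{L^\infty(\Omega\times(0,T))}\|z_k - z_{k-1}\|_{L^2(\Omega)}^2$, I arrive at
\[
\|\nabla(z_k - z_{k-1})\|_{L^2(\Omega)}^2 + \lambda_0 \|z_k - z_{k-1}\|_{L^2(\Omega)}^2 \leq \text{(right-hand side)},
\]
whose left-hand side dominates $\min(1,\lambda_0)\|z_k - z_{k-1}\|_V^2$.

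For the right-hand side, the $(f_k - f_{k-1})$-term is bounded via duality by $\|f_k - f_{k-1}\|_{V^*}\|z_k - z_{k-1}\|_V$. For the remaining $(\sigma_k - \sigma_{k-1})\gamma(z_{k-1})$-term, I apply H\"older's inequality with three exponents $p$, $q$, $q$ satisfying $1/p + 2/q = 1$, which is consistent with the stated range of $p$ in Theorem \ref{thm:main} for all dimensions $N$. Combined with the growth bound $|\gamma(s)| \leq C_1(|s|+1)$ and the Sobolev embedding $V \hookrightarrow L^q(\Omega)$, this term is controlled by $C\|\sigma_k - \sigma_{k-1}\|_{L^p(\Omega)}(1 + \|z_{k-1}\|_V)\|z_k - z_{k-1}\|_V$. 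Lemma \ref{lem:zk} then estimates $\|z_{k-1}\|_V$ by $\phi_{f,\sigma,T}$, producing the factor $\phi_{f,\sigma,T}$. Dividing through by $\|z_k - z_{k-1}\|_V$, squaring, and finally dividing by $\tau^2$ gives the claim, after absorbing the stray constant in front of the $\|f_k - f_{k-1}\|_{V^*}^2$ term into $C\phi_{f,\sigma,T}^2$ using $\phi_{f,\sigma,T} \geq 1$.

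The main obstacle is conceptual rather than computational: because $\dind$ is a multivalued nonlinear operator, no general monotonicity of the form $(\eta_k - \eta_{k-1})(z_k - z_{k-1}) \geq 0$ is available across consecutive time levels. One must exploit the combination of the complementarity at step $k$ with the sign of $\eta_{k-1}$ at the previous step, crucially using the obstacle inequality $z_k \leq z_{k-1}$ to extract a favorable sign and thereby eliminate the multivalued contribution before confronting the nonlinear elliptic part.
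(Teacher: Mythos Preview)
Your proof is correct and mirrors the paper's own argument: subtract consecutive discretized equations, use the complementarity $\eta_k(z_k-z_{k-1})=0$ together with $\eta_{k-1}\geq 0$ and $z_k\leq z_{k-1}$ to discard the multivalued contribution, split $\sigma_k\gamma(z_k)-\sigma_{k-1}\gamma(z_{k-1})$ as you indicate, and then combine assumption~(i), H\"older with exponents $(p,q,q)$, the Sobolev embedding, and Lemma~\ref{lem:zk}. One small imprecision: for $k=1$, $\eta_0$ lies only in $V^*$ (since $f_0=f(\cdot,0)\in V^*$) and assumption~(iii) of Theorem~\ref{thm:main} gives $\eta_0\geq 0$ in $V^*$ rather than a.e., but the inequality you need, namely $-\langle\eta_0,z_1-z_0\rangle_V\geq 0$, follows directly from this and $z_0-z_1\geq 0$ a.e.~in $\Omega$.
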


\begin{proof}
	Set $ \eta_k := \Delta z_k - \lambda z_k - \sigma_k \gamma(z_k) + f_k \in L^2(\Omega) $ for each $ k = 1, \dots, m $ and set $ \eta_0 := \Delta z_0 - \lambda z_0 - \sigma(0) \gamma(z_0) + f(0) \in V^* $.
	Then we see that
	\[
		\eta_k \in \dind \left( z_k-z_{k-1} \right) \quad \mbox{ for } \ k = 1, 2, \dots, m,
	\]
	and for each $ k \geq 2 $, it follows that
	\[
		\eta_k - \eta_{k-1} - \Delta z_k + \Delta z_{k-1} + \lambda \left( z_k - z_{k-1} \right) + \sigma_k \gamma (z_k) - \sigma_{k-1} \gamma (z_{k-1}) = f_k - f_{k-1}
	\]
	in $ L^2(\Omega) $.
	Testing it by $ z_k - z_{k-1} $, we have
	\begin{align}
		\mel{\left( \eta_k - \eta_{k-1}, z_k - z_{k-1} \right)_{L^2(\Omega)} + \left\| \nabla (z_k-z_{k-1}) \right\|^2_{L^2(\Omega)} + \lambda \left\| z_k - z_{k-1} \right\|^2_{L^2(\Omega)}}\notag\\
		&\quad + \left( \sigma_k ( \gamma (z_k) - \gamma (z_{k-1}) ), z_k - z_{k-1} \right)_{L^2(\Omega)} + \left( \gamma (z_{k-1}) ( \sigma_k - \sigma_{k-1} ), z_k - z_{k-1} \right)_{L^2(\Omega)} \notag\\
		&= \left\langle f_k - f_{k-1}, z_k - z_{k-1} \right\rangle_V,\label{eq:test}
	\end{align}
	where $ \left( \cdot, \cdot \right)_{L^2(\Omega)} $ stands for the inner product of $ L^2(\Omega) $.
	The first term of the left-hand side of \eqref{eq:test} is nonnegative; indeed,
	\begin{align*}
		\left( \eta_k - \eta_{k-1}, z_k - z_{k-1} \right)_{L^2(\Omega)} &= \left( \eta_k , z_k - z_{k-1} \right)_{L^2(\Omega)} - \left( \eta_{k-1}, z_k - z_{k-1} \right)_{L^2(\Omega)}\\
		&= - \left( \eta_{k-1}, z_k - z_{k-1} \right)_{L^2(\Omega)} \geq 0 \quad \mbox{ for } \ k = 1, \dots, m.
	\end{align*}
	Moreover, set $ \gamma_0 := \gamma(0) $ and let $ \beta(s) := \gamma(s) - \gamma_0 + Ls $ for $ s \in \R $.
	Then $ \beta: \R \to \R $ is monotone, and hence, it follows that
	\begin{align*}
		\mel{\left( \sigma_k \left( \gamma (z_k) - \gamma (z_{k-1}) \right), \, z_k - z_{k-1} \right)_{L^2(\Omega)}}\\
		&= \left( \sigma_k \left( \beta (z_k) - \beta (z_{k-1}) \right), \, z_k - z_{k-1} \right)_{L^2(\Omega)} - L \int_\Omega \sigma_k \left| z_k - z_{k-1} \right|^2 \d x\\
		&\geq -L \int_\Omega \sigma_k \left| z_k - z_{k-1} \right|^2 \d x
	\end{align*}
	for $ k=1,\dots,m $.
	Furthermore, one can deduce from \eqref{ass:gammau} and \eqref{est:zk} that
	\begin{align*}
		\mel{\left| \left( \gamma (z_{k-1}) (\sigma_k - \sigma_{k-1}), z_k - z_{k-1} \right)_{L^2(\Omega)} \right|}\\
		&\leq \int_\Omega \left| \gamma (z_{k-1}) \right| \left| \sigma_k - \sigma_{k-1} \right| \left| z_k - z_{k-1} \right| \d x\\
		&\leq C \int_\Omega \left( \left| z_{k-1} \right| + 1 \right) \left| \sigma_k - \sigma_{k-1} \right| \left| z_k - z_{k-1} \right| \d x\\
		&\leq C \left( \left\| z_{k-1} \right\|_{L^{q}(\Omega)} + 1 \right) \left\| \sigma_k - \sigma_{k-1} \right\|_{L^p(\Omega)} \left\| z_k - z_{k-1} \right\|_{L^{q}(\Omega)}\\
		&\leq C \left( \left\| z_{k-1} \right\|_V + 1 \right) \left\| \sigma_k - \sigma_{k-1} \right\|_{L^p(\Omega)} \left\| z_k - z_{k-1} \right\|_V\\
		&\leq C \phi_{f,\sigma,T} \left\| \sigma_k - \sigma_{k-1} \right\|_{L^p(\Omega)} \left\| z_k - z_{k-1} \right\|_V,
	\end{align*}
	where $ C>0 $ is independent of $ k $, $ \tau $ and $ T $.
	Therefore \eqref{eq:test} implies that
	\begin{align*}
		\mel{\left\| \nabla \left( z_k - z_{k-1} \right) \right\|^2_{L^2(\Omega)} + \int_\Omega \left( \lambda - L \sigma_k \right) \left| z_k - z_{k-1} \right|^2 \d x}\\
		& \leq C \phi_{f,\sigma,T} \left\| \sigma_k - \sigma_{k-1} \right\|_{L^p(\Omega)} \left\| z_k - z_{k-1} \right\|_V + \left\| f_k - f_{k-1} \right\|_{V^*} \left\| z_k - z_{k-1} \right\|_V.
	\end{align*}
	Since $ \inf_{x \in \Omega} (\lambda - L \sigma_k) \geq \lambda_0 > 0 $ for any $ k $, we deduce by Young's inequality that
	\[
		\left\| z_k - z_{k-1} \right\|^2_V \leq C \phi_{f,\sigma,T}^2 \left( \left\| \sigma_k - \sigma_{k-1} \right\|^2_{L^p(\Omega)} + \left\| f_k - f_{k-1} \right\|^2_{V^*} \right),
	\]
	which gives \eqref{est:zkd}.
\end{proof}

\begin{lem}\label{lem:zkl}
	There exists a constant $ C>0 $ independent of $ \tau $ 
	such that
	\begin{equation}\label{est:zkl}
		\tau \sum^m_{k=1} \left\| z_k \right\|^2_X \leq C \left\{ \tau \sum^m_{k=1} \left( \left\| z_k \right\|^2_V + \left\| f_k \right\|^2_{L^2(\Omega)} \right) + \| \tilde{f} \|^2_{L^2(\Omega)} + \left\| z_0 \right\|^2_X + 1 \right\}.
	\end{equation}
\end{lem}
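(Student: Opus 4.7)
The plan is to combine the Lewy-Stampacchia estimate \eqref{eq:ls}, the elliptic regularity hypothesis \eqref{eq:ass}, and a $\tau$-absorption argument. Write $\eta_k := f_k + \Delta z_k - \lambda z_k - \sigma_k \gamma(z_k) \geq 0$ for the multiplier in \eqref{eq:dis}. From the equation, $-\Delta z_k = f_k - \eta_k - \lambda z_k - \sigma_k \gamma(z_k)$, so
\[
\|\Delta z_k\|_{L^2(\Omega)}^2 \leq C\bigl(\|f_k\|^2_{L^2(\Omega)} + \|\eta_k\|^2_{L^2(\Omega)} + \|z_k\|_V^2 + 1\bigr),
\]
using $\sigma \in L^\infty$ and the linear growth \eqref{ass:gammau} of $\gamma$. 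On the other hand, the upper bound in \eqref{eq:ls} gives $\eta_k \leq \bigl(f_k - (-\Delta z_{k-1} + \lambda z_{k-1} + \sigma_k \gamma(z_k))\bigr)_+$ pointwise a.e., so
\[
\|\eta_k\|_{L^2(\Omega)}^2 \leq C\bigl(\|f_k\|^2 + \|\Delta z_{k-1}\|^2 + \|z_{k-1}\|_V^2 + \|z_k\|_V^2 + 1\bigr).
\]

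Combining these two inequalities yields the iterative bound
\[
\|\Delta z_k\|^2 \leq C_*\, \|\Delta z_{k-1}\|^2 + C_*\bigl(\|f_k\|^2 + \|z_{k-1}\|_V^2 + \|z_k\|_V^2 + 1\bigr),
\]
with $C_*$ independent of $k$ and $\tau$. Multiplying by $\tau$ and summing from $k=1$ to $m$, and using $\tau \sum_{k=1}^m \|\Delta z_{k-1}\|^2 \leq \tau\|\Delta z_0\|^2 + \tau \sum_{k=1}^m \|\Delta z_k\|^2$, one absorbs the iterative term on the left-hand side provided $\tau \leq 1/(2C_*)$, producing
\[
\tau \sum_{k=1}^m \|\Delta z_k\|^2 \leq C\,\tau\sum_{k=1}^m\bigl(\|f_k\|^2 + \|z_k\|_V^2 + 1\bigr) + C\,\tau\|\Delta z_0\|^2.
\]
Since $\tau\|\Delta z_0\|^2 \leq T\|z_0\|_X^2$ and elliptic regularity gives $\|z_k\|_X^2 \leq C(\|\Delta z_k\|^2 + \|z_k\|_V^2)$, the desired estimate \eqref{est:zkl} follows; the $\|\tilde f\|^2_{L^2(\Omega)}$ contribution appears naturally when refining the initial step $k=1$ by invoking the lower bound $f \geq \tilde f$ together with the admissibility hypothesis (iii) on $z_0$.

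The main obstacle is that the Lewy-Stampacchia bound is inherently iterative, introducing $\|\Delta z_{k-1}\|^2$ with a constant $C_*$ independent of $\tau$; a naive iteration would therefore cause geometric blow-up in $k$. The remedy is to sum first and only then absorb: the factor $\tau$ introduced by the sum beats $C_*$ once $\tau$ is small, closing the estimate in an $L^2$-in-time sense rather than pointwise in $k$.
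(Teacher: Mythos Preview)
Your absorption step is the gap. After multiplying the iterative bound $\|\Delta z_k\|^2 \leq C_* \|\Delta z_{k-1}\|^2 + G_k$ by $\tau$ and summing, you obtain
\[
\tau \sum_{k=1}^m \|\Delta z_k\|^2 \leq C_*\, \tau \sum_{k=1}^m \|\Delta z_{k-1}\|^2 + \tau \sum_{k=1}^m G_k,
\]
and after the index shift the right-hand side contains $C_*\, \tau \sum_{k=1}^m \|\Delta z_k\|^2$, not $C_* \tau \cdot \tau \sum_k \|\Delta z_k\|^2$. The factor $\tau$ appears symmetrically on both sides and cancels; the coefficient to be absorbed is $C_*$ itself, which is independent of $\tau$. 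Since $C_*$ arises from squaring a sum of several terms there is no reason for it to be less than $1$, and the condition ``$\tau \leq 1/(2C_*)$'' does nothing here. Your final paragraph identifies the geometric blow-up correctly but then proposes a remedy that fails for exactly this reason: summing with weight $\tau$ does not introduce any extra smallness in front of the recursive term.

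The paper closes the estimate differently. It iterates the Lewy--Stampacchia lower bound all the way down to the initial datum,
\[
-\Delta z_k + \lambda z_k \geq \bigl(f_k - \sigma_k\gamma(z_k)\bigr) \wedge \dots \wedge \bigl(f_1 - \sigma_1\gamma(z_1)\bigr) \wedge \bigl(-\Delta z_0 + \lambda z_0\bigr),
\]
and then eliminates the intermediate $z_l$'s by using $f_l \geq \tilde f$, the ordering $z_k \leq z_l \leq z_0$, and the monotonicity of $\beta(s)=\gamma(s)+Ls$: this gives $f_l - \sigma_l\gamma(z_l) \geq \tilde f - \|\sigma\|_{L^\infty}|\beta(z_0)| - L\|\sigma\|_{L^\infty}|z_k|$ uniformly in $l$. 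One thus gets a bound on $\|\Delta z_k\|_{L^2}$ for each $k$ individually, depending only on $\|f_k\|_{L^2}$, $\|\tilde f\|_{L^2}$, $\|z_0\|_X$, and $\|z_k\|_V$, with no recursion at all. The role of assumption (iv) is therefore not a refinement of the step $k=1$ but the key device that collapses the iterated minimum.
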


\begin{proof}
	One can deduce from \eqref{ass:f} and \eqref{eq:ls} that
	\begin{align*}
		f_k - \sigma_k \gamma(z_k) &\geq -\Delta z_k + \lambda z_k\\
		&\geq \left( f_k - \sigma_k \gamma(z_k) \right) \wedge \left( -\Delta z_{k-1} + \lambda z_{k-1} \right)\\
		&\geq \left( f_k - \sigma_k \gamma(z_k) \right) \wedge \left( f_{k-1} - \sigma_{k-1} \gamma(z_{k-1}) \right) \wedge \left( -\Delta z_{k-2} + \lambda z_{k-2} \right)\\
		&\geq \dots \geq \left( f_k - \sigma_k \gamma(z_k) \right) \wedge \dots \wedge \left( f_1 - \sigma_1 \gamma(z_1) \right) \wedge \left( -\Delta z_0 + \lambda z_0 \right)\\
		&\geq \big( \tilde{f} - \sigma_k \gamma(z_k) \big) \wedge \dots \wedge \big( \tilde{f} - \sigma_1 \gamma(z_1) \big) \wedge \left( -\Delta z_0 + \lambda z_0 \right) \quad \mbox{ a.e.~in } \ \Omega.
	\end{align*}
	for \( k=1,\dots,m \).
	Set $ \beta(s) = \gamma(s) + Ls $ for $ s \in \R $.
	Then by \eqref{ass:gammal}, $ \beta $ is monotone.
	Since $ z_k \leq z_{k-1} \leq \dots \leq z_0 $ holds a.e.~in $ \Omega $, we can deduce that
	\begin{align*}
		\tilde{f} - \sigma_l \gamma(z_l) &= \tilde{f} - \sigma_l \beta(z_l) + L \sigma_l z_l\\
		&\geq \tilde{f} - \sigma_l \beta(z_0) + L \sigma_l z_k\\
		&\geq \tilde{f} - \left\| \sigma \right\|_{L^\infty(I \times (0,T))} |\beta(z_0)| - L \left\| \sigma \right\|_{L^\infty(I \times (0,T))} |z_k| \quad \mbox{ a.e.~in } \ \Omega
	\end{align*}
	for $ 1 \leq l \leq k $.
	Thus we have
	\begin{align*}
		\left| -\Delta z_k + \lambda z_k \right| &\leq \left| f_k - \sigma_k \gamma(z_k) \right| + \left| \tilde{f} - \left\| \sigma \right\|_{L^\infty(I \times (0,T))} |\beta(z_0)| - L \left\| \sigma \right\|_{L^\infty(I \times (0,T))} |z_k| \right|\\
		&\quad + \left| -\Delta z_0 + \lambda z_0 \right| \quad \mbox{ a.e.~in } \ \Omega
	\end{align*}
	for $ k=1,\dots,m $.
	Therefore it follows from \eqref{ass:gammau} that
	\begin{align}
		\left\| \Delta z_k \right\|^2_{L^2(\Omega)} &\leq C \left( \left\| z_k \right\|^2_V + \left\| f_k \right\|^2_{L^2(\Omega)} + \| \tilde{f} \|^2_{L^2(\Omega)} + \left\| z_0 \right\|^2_X + 1 \right) \label{eq:zkl}
	\end{align}
	for each $ k $.
	Hence \eqref{est:zkl} has been proved by multiplying \eqref{eq:zkl} by $ \tau $ and summing up them from $ k = 1 $ to $ m $.
\end{proof}

Define \emph{piecewise linear} and \emph{constant interpolants} of $ \{ f_k \} $, $ \{ \sigma_k \} $ and $ \{ z_k \} $ by
\begin{alignat}{4}
	\ftau(x,t) &:= f_{k-1}(x) + \frac{t-t_{k-1}}{\tau} \left( f_k(x)-f_{k-1}(x) \right),\label{def:ftau}\\
	\ftaubar(x,t) &:= f_k(x),\label{def:ftaubar}\\
	\sigmatau (x,t) &:= \sigma_{k-1}(x) + \frac{t-t_{k-1}}{\tau} \left( \sigma_k(x)-\sigma_{k-1}(x) \right), \label{def:sigmatau}\\
	\sigmataubar (x,t) &:= \sigma_k(x), \label{def:sigmataubar}
\end{alignat}
and
\begin{align}
	\ztau(x,t) &:= z_{k-1}(x) + \frac{t-t_{k-1}}{\tau} \left( z_k(x)-z_{k-1}(x) \right),\label{def:ztau}\\
	\ztaubar(x,t) &:= z_k(x)\label{def:ztaubar}
\end{align}
for $ x \in \Omega $, $ t_{k-1} < t \leq t_k $ and $ k=1,\dots,m $.
Then it follows that
\begin{alignat*}{4}
	\ftau &\in L^\infty(0,T;L^2(\Omega)) \cap W^{1,2} (0,T;V^*), &&\ftaubar &&\in L^\infty (0,T;L^2(\Omega)),\\
	\sigmatau &\in L^\infty(\Omega \times (0,T)) \cap W^{1,2}(0,T;L^p(\Omega)), &&\sigmataubar &&\in L^\infty(\Omega \times (0,T)),\\
	\ztau &\in W^{1,2} (0,T;X \cap V), & \ \quad \ &\ztaubar &&\in L^\infty (0,T;X \cap V)
\end{alignat*}
for each $ \tau $.

\begin{rem}\label{rem:rephrased}
	Combined with \eqref{def:ftau}--\eqref{def:ztaubar}, the discretized equations \eqref{eq:dis} and the boundary conditions \eqref{bcic:dis} are rephrased as
	\begin{equation}\label{eq:reph}
		\left\lbrace
		\begin{alignedat}{4}
			&\dind \left( \partial_t \ztau \right) - \Delta \ztaubar + \lambda \ztaubar + \sigmataubar \gamma (\ztaubar) \ni \ftaubar & \quad &\mbox{ in } \ \Omega \times (0,T),\\
			&\ztau = \ztaubar = 0 &&\mbox{ on } \ \ddom \times (0,T),\\
			&\partial_\nu \ztau = \partial_\nu \ztaubar = 0 &&\mbox{ on } \ \ndom \times (0,T),\\
			&\ztau(\cdot,0) = z_0 &&\mbox{ in } \ \Omega.
		\end{alignedat}
		\right.
	\end{equation}
	Moreover, a priori estimates obtained in Lemmas \ref{lem:zk}--\ref{lem:zkl} can be rephrased as
	\begin{align*}
		\left\| \ztaubar \right\|_{L^\infty(0,T;V)} &\leq \phi_{f,\sigma,T},\\
		\left\| \partial_t \ztau \right\|^2_{L^2(0,T;V)} &\leq C \phi_{f,\sigma,T}^2 \left( \left\| \partial_t \ftau \right\|^2_{L^2(0,T;V^*)} + \left\| \partial_t \sigmatau \right\|^2_{L^2(0,T;L^p(\Omega))} \right),\\
		\left\| \ztaubar \right\|^2_{L^2(0,T;X)} &\leq C \left( \left\| \ztaubar \right\|^2_{L^2(0,T;V)} + \left\| \ftaubar \right\|^2_{L^2(0,T;L^2(\Omega))} + \| \tilde{f} \|^2_{L^2(\Omega)} + \left\| z_0 \right\|^2_X + 1 \right)\\
		&\leq C \left( T \phi^2_{f,\sigma,T} + \left\| f \right\|^2_{L^2(0,T;L^2(\Omega))} + \| \tilde{f} \|^2_{L^2(\Omega)} + \left\| z_0 \right\|^2_X + 1 \right).
	\end{align*}
	As for the last inequality, see Proposition \ref{pro:f} in Appendix \ref{appendix:inter}.
\end{rem}

\begin{lem}\label{lem:strong}
	The family $(\ztau)$ is bounded in $L^2(0,T;X)$, and moreover, it is precompact in $ C([0,T];V) $.
\end{lem}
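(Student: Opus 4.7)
The plan is to derive the $L^2(0,T;X)$-bound from the convex-combination structure of the piecewise-linear interpolant together with Lemma \ref{lem:zkl}, and to establish precompactness in $C([0,T];V)$ by combining an Aubin-Lions compactness in $L^2(0,T;V)$ with the H\"older equicontinuity in $V$ furnished by Lemma \ref{lem:zkd}.

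For the $L^2(0,T;X)$-bound, each $z_k$ belongs to $X$ by Theorem \ref{thm:dis}; on $[t_{k-1},t_k]$ one has $\ztau(t) = (1-s) z_{k-1} + s z_k$ with $s = (t-t_{k-1})/\tau \in [0,1]$, so the convexity of $\|\cdot\|_X^2$ gives $\|\ztau(t)\|_X^2 \leq (1-s)\|z_{k-1}\|_X^2 + s\|z_k\|_X^2$. Integrating over each subinterval and summing from $k=1$ to $m$ yields
\[
	\int_0^T \|\ztau(t)\|_X^2 \,\d t \leq \tau \sum_{k=0}^m \|z_k\|_X^2,
\]
which is uniformly bounded in $\tau$ by Lemma \ref{lem:zkl} and the hypothesis $z_0 \in X \cap V$.

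Next, I would invoke the Aubin-Lions-Simon lemma with the triple $X \cap V \Subset V \subset V$, where the compact embedding follows from Rellich's theorem applied to $H^2 \cap V \Subset H^1 \cap V$. The preceding bound combined with Lemma \ref{lem:zk} shows that $(\ztau)$ is bounded in $L^2(0,T;X \cap V)$, while Lemma \ref{lem:zkd} provides $(\partial_t \ztau)$ bounded in $L^2(0,T;V)$; hence $(\ztau)$ is relatively compact in $L^2(0,T;V)$. On the other hand, H\"older's inequality and Lemma \ref{lem:zkd} yield
\[
	\|\ztau(t) - \ztau(s)\|_V \leq \int_s^t \|\partial_t \ztau(r)\|_V \,\d r \leq |t-s|^{1/2} \|\partial_t \ztau\|_{L^2(0,T;V)} \leq C |t-s|^{1/2}
\]
uniformly in $\tau$, so $(\ztau)$ is equicontinuous as a family of maps from $[0,T]$ into $V$.

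Finally, I would upgrade the $L^2(0,T;V)$-compactness to $C([0,T];V)$-compactness as follows. Given any sequence $\tau_n \to 0$, extract a (not relabeled) subsequence converging strongly in $L^2(0,T;V)$ to some $z$, and then, after further extraction, pointwise in $V$ for almost every $t \in [0,T]$; weak compactness in $W^{1,2}(0,T;V)$ places $z$ in $C^{0,1/2}([0,T];V)$, so $z$ itself is equicontinuous. A three-$\epsilon$ argument using a finite $\delta$-net of time points at which the pointwise a.e.\ convergence holds, together with the uniform H\"older equicontinuity, promotes the a.e.\ convergence to uniform convergence on $[0,T]$. The main technical point of the proof is precisely this upgrade from $L^2$ to $C$, but it is routine once the Aubin-Lions compactness and the H\"older equicontinuity have both been secured.
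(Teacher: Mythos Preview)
Your proof is correct and the first half (the $L^2(0,T;X)$-bound via convexity of the squared norm on the piecewise-linear interpolant) is identical to the paper's argument.

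For the precompactness in $C([0,T];V)$, the paper takes a slightly more direct route: after establishing the same H\"older equicontinuity estimate $\|\ztau(t)-\ztau(s)\|_V \leq C|t-s|^{1/2}$, it invokes Theorem~3 of Simon~\cite{simon} in one step, using boundedness of $(\ztau)$ in $L^2(0,T;X)$, the compact embedding $X \Subset V$, and the equicontinuity in $C([0,T];V)$ to conclude relative compactness in $C([0,T];V)$ directly. Your approach instead passes through Aubin--Lions to get $L^2(0,T;V)$-compactness first and then upgrades to $C([0,T];V)$ by hand with the three-$\varepsilon$ argument. This is essentially a reproof of the relevant case of Simon's theorem; the ingredients (compact embedding, $L^2(0,T;X)$-bound, equicontinuity) are the same, and the detour through $L^2$ is unnecessary but harmless.
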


\begin{proof}
	We find by convexity that
	\[
		\left\| \Delta z_\tau(t) \right\|_{L^2(\Omega)}^2 \leq \frac{t_k-t}{\tau} \left\| \Delta z_{k-1} \right\|_{L^2(\Omega)}^2 + \frac{t-t_{k-1}}{\tau} \left\| \Delta z_k \right\|_{L^2(\Omega)}^2 \quad \mbox{ for a.e.~} t \in (t_{k-1},t_k),
	\]
	which yields
	\[
		\int^{t_k}_{t_{k-1}} \|\Delta z_\tau(t)\|_{L^2(\Omega)}^2 \, \d t \leq \frac{\tau}2 \|\Delta z_{k-1}\|_{L^2(\Omega)}^2 + \frac{\tau}2 \|\Delta z_k\|_{L^2(\Omega)}^2.
	\]
	Hence it follows that
	\begin{align*}
		\int^T_0 \|\Delta z_\tau(t)\|_{L^2(\Omega)}^2 \, \d t &= \sum_{k=1}^m \int^{t_k}_{t_{k-1}} \|\Delta z_\tau(t)\|_{L^2(\Omega)}^2 \, \d t\\
		&\leq \frac{\tau}2 \sum_{k=1}^m \|\Delta z_{k-1}\|_{L^2(\Omega)}^2 + \frac{\tau}2 \sum_{k=1}^m\|\Delta z_k\|_{L^2(\Omega)}^2\\
		&\leq \frac{\tau}2 \|\Delta z_0\|_{L^2(\Omega)}^2 + \int^T_0 \|\Delta \ztaubar(t)\|_{L^2(\Omega)}^2 \, \d t \leq C.
	\end{align*}
	
	To show the precompactness of $ (\ztau) $ in $ C([0,T];V) $, we first prove the equi-continuity of $(\ztau)$ in $C([0,T];V)$.
	Using Lemma \ref{lem:zkd}, one observes that
	\begin{align*}
		\left\| \ztau(t) - \ztau(s) \right\|_V &\leq \int^t_s \left\| \partial_t \ztau(r) \right\|_V \d r\\
		&\leq \left| t-s \right|^{1/2} \left\| \partial_t \ztau \right\|_{L^2(0,T;V)} \leq C \left| t-s \right|^{1/2}
	\end{align*}
	for any $t ,s \in [0,T]$. Hence since $(\ztau)$ is bounded in $L^2(0,T;X)$ and $X$ is compactly embedded in $V$, using Theorem 3 of~\cite{simon}, one can conclude that $(\ztau)$ is relatively compact in $C([0,T];V)$. This completes the proof.
\end{proof}

\begin{lem}\label{lem:ztztb}
	It holds that
	\begin{equation}\label{eq:ztztb}
		\left\| \ztau - \ztaubar \right\|_{L^\infty(0,T;V)} \to 0 \quad \mbox{ as } \ \tau \to 0_+.
	\end{equation}
\end{lem}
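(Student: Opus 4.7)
The plan is to compare the piecewise linear interpolant $\ztau$ and the piecewise constant interpolant $\ztaubar$ directly on each subinterval $(t_{k-1}, t_k]$ and then exploit the uniform $L^2(0,T;V)$ bound on the discrete time derivative $\partial_t \ztau$ coming from Lemma \ref{lem:zkd}.

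First, I would compute the pointwise difference. For $t \in (t_{k-1},t_k]$, the definitions \eqref{def:ztau} and \eqref{def:ztaubar} give
\[
\ztau(t) - \ztaubar(t) = z_{k-1} + \frac{t-t_{k-1}}{\tau}(z_k - z_{k-1}) - z_k = \frac{t - t_k}{\tau}(z_k - z_{k-1}),
\]
so that
\[
\|\ztau(t) - \ztaubar(t)\|_V = \frac{t_k - t}{\tau}\,\|z_k - z_{k-1}\|_V \leq \|z_k - z_{k-1}\|_V.
\]
Next, I would rewrite the increment as an integral of $\partial_t \ztau$: since $\ztau$ is affine on $[t_{k-1},t_k]$,
\[
z_k - z_{k-1} = \int_{t_{k-1}}^{t_k} \partial_t \ztau(s)\,\d s,
\]
and applying the Cauchy–Schwarz inequality yields
\[
\|z_k - z_{k-1}\|_V \leq \int_{t_{k-1}}^{t_k} \|\partial_t \ztau(s)\|_V \,\d s \leq \sqrt{\tau}\,\|\partial_t \ztau\|_{L^2(t_{k-1},t_k;V)} \leq \sqrt{\tau}\,\|\partial_t \ztau\|_{L^2(0,T;V)}.
\]

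Finally, the uniform bound
\[
\|\partial_t \ztau\|_{L^2(0,T;V)}^2 \leq C\phi_{f,\sigma,T}^2 \Bigl(\|\partial_t f\|_{L^2(0,T;V^*)}^2 + \|\partial_t \sigma\|_{L^2(0,T;L^p(\Omega))}^2\Bigr),
\]
recorded in Remark \ref{rem:rephrased} and guaranteed by the standing hypotheses $f \in W^{1,2}(0,T;V^*)$ and $\sigma \in W^{1,2}(0,T;L^p(\Omega))$, is independent of $\tau$. Taking the essential supremum over $t \in (0,T]$ and combining the two displayed inequalities gives
\[
\|\ztau - \ztaubar\|_{L^\infty(0,T;V)} \leq C\sqrt{\tau} \xrightarrow{\tau \to 0_+} 0,
\]
which is the desired conclusion.

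I do not anticipate any serious obstacle here: the argument is purely an exercise in comparing linear and constant interpolants of the same discrete sequence, and the only nontrivial ingredient is the a priori estimate on $\partial_t \ztau$ already proved in Lemma \ref{lem:zkd}. The one point to watch is that the bound on $\partial_t \ztau$ is in $L^2$ rather than $L^\infty$, which is why the Cauchy–Schwarz step gains the factor $\sqrt{\tau}$ needed to drive the difference to zero uniformly in $t$.
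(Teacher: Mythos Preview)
Your proof is correct and follows essentially the same route as the paper: both compute $\|\ztau(t)-\ztaubar(t)\|_V = |t-t_k|\,\|\tfrac{z_k-z_{k-1}}{\tau}\|_V$ on $(t_{k-1},t_k]$ and then show the right-hand side is $O(\tau^{1/2})$ uniformly in $k$. The only cosmetic difference is that the paper invokes the pointwise estimate \eqref{est:zkd} together with Proposition~\ref{prop:inter} for the discrete derivatives of $f$ and $\sigma$, whereas you pass through the already-packaged $L^2(0,T;V)$ bound on $\partial_t \ztau$ from Remark~\ref{rem:rephrased} and apply Cauchy--Schwarz; the two are equivalent.
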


\begin{proof}
	We observe from \eqref{est:zkd} and Proposition \ref{prop:inter} that
	\begin{align*}
		\left\| \ztau(t) - \ztaubar(t) \right\|_V &= \left\| z_{k-1} + \frac{t-t_{k-1}}{\tau} \left( z_k - z_{k-1} \right) - z_k \right\|_V\\
		&= \left| t-t_k \right| \left\| \frac{z_k-z_{k-1}}{\tau} \right\|_V\\
		&\leq C \tau \left( \left\| \frac{f_k-f_{k-1}}{\tau} \right\|_{V^*} + \left\| \frac{\sigma_k - \sigma_{k-1}}{\tau} \right\|_{L^p(\Omega)} \right)\\
		&\leq C \tau^{1/2} \left( \left\| \partial_t f \right\|_{L^2(0,T;V^*)} + \left\| \partial_t \sigma \right\|_{L^2(0,T;L^p(\Omega))} \right)
	\end{align*}
	for each $t \in (t_{k-1}, t_k]$ and $k = 1,\ldots,m$, and hence, the convergence follows.
\end{proof}

\subsection{Identification of the weak limits}\label{ssec:id}

Lemmas \ref{lem:zkd}, \ref{lem:zkl} and \ref{lem:strong} imply that there exist a (not relabeled) subsequence of $ (\tau) $ and a function $ z \in W^{1,2}(0,T;V) \cap L^2(0,T;X) $ such that
\begin{alignat}{4}
	z_\tau &\to z & \quad &\mbox{ weakly in } \ W^{1,2} (0,T;V),\label{eq:conv1}\\
	&&&\mbox{ strongly in } \ C ([0,T];V),\\
	\overline{z}_\tau &\to z &&\mbox{ weakly in } \ L^2 (0,T;X),\\
	&&&\mbox{ strongly in } \ L^\infty (0,T;V).\label{eq:conv4}
\end{alignat}
Here we also used \eqref{eq:ztztb}.
We prove that the limit function $ z=z(x,t) $ complies with (i)--(iv) of Definition \ref{defi:sol}, i.e., $ z $ is a strong solution to \eqref{eq} and \eqref{bcic}.

\begin{lem}\label{lem:identity}
	The weak limit $ z \in W^{1,2}(0,T;V) \cap L^2(0,T;X) $ of $ (\ztau) $ and $ (\ztaubar) $ exhibited in \eqref{eq:conv1}--\eqref{eq:conv4} is a strong solution of \eqref{eq} {\rm (}or equivalently \eqref{vi}{\rm )} and \eqref{bcic}.
\end{lem}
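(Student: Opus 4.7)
The plan is to verify conditions (i)--(iv) of Definition \ref{defi:sol} for the limit $z$. Property (i), that $z \in W^{1,2}(0,T;L^2(\Omega)) \cap L^2(0,T;X \cap V)$, is immediate from the weak convergences \eqref{eq:conv1}--\eqref{eq:conv4} and the continuous embedding $V \hookrightarrow L^2(\Omega)$; the boundary conditions defining $V$ and $X$ are preserved under weak convergence. Property (iv), the initial condition $z(0) = z_0$, follows from $\ztau(0) = z_0$ and strong convergence $\ztau \to z$ in $C([0,T];V)$.

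Next I treat the sign conditions in (ii). Introduce
\[
\etataubar := \ftaubar + \Delta \ztaubar - \lambda \ztaubar - \sigmataubar \gamma(\ztaubar),
\]
so that \eqref{eq:reph} reads $\etataubar \in \dind(\partial_t \ztau)$, forcing $\etataubar \geq 0$ and $\partial_t \ztau \leq 0$ a.e. The plan is to show $\etataubar \wto \eta := f + \Delta z - \lambda z - \sigma \gamma(z)$ weakly in $L^2(0,T;L^2(\Omega))$: weak convergence of $\Delta \ztaubar$ comes from \eqref{eq:conv4}; strong convergence $\ftaubar \to f$ in $L^2(0,T;L^2(\Omega))$ follows from standard interpolation estimates (Appendix \ref{appendix:inter}); and strong convergence $\sigmataubar \gamma(\ztaubar) \to \sigma \gamma(z)$ in $L^2(0,T;L^2(\Omega))$ follows by combining the $L^\infty$-bound and a.e.\ convergence of $\sigmataubar$ with continuity of the Nemytski operator induced by $\gamma$, which applies thanks to the sublinear growth \eqref{ass:gammau}, the embedding $V \hookrightarrow L^q(\Omega)$, and the convergence of $(\ztaubar)$ in $L^\infty(0,T;V)$. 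Since the cones $\{\eta \geq 0\}$ and $\{\partial_t z \leq 0\}$ are weakly closed in $L^2(0,T;L^2(\Omega))$, passing to the limit yields $-\Delta z + \lambda z + \sigma \gamma(z) \leq f$ and $\partial_t z \leq 0$ a.e.

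The main obstacle lies in the complementarity (iii). The discrete relation $\etataubar \cdot \partial_t \ztau = 0$ integrates to $\int_0^T \!\int_\Omega \etataubar\, \partial_t \ztau \, \d x\, \d t = 0$, and I will split this as
\[
\int_0^T \!\!\int_\Omega \etataubar \partial_t \ztau \, \d x \d t = \int_0^T \!\!\int_\Omega (\ftaubar - \lambda \ztaubar - \sigmataubar \gamma(\ztaubar)) \partial_t \ztau \, \d x \d t + \int_0^T (\Delta \ztaubar, \partial_t \ztau)_{L^2(\Omega)} \d t.
\]
The first integral passes to its obvious limit by strong-times-weak convergence, using $\partial_t \ztau \wto \partial_t z$ weakly in $L^2(0,T;L^2(\Omega))$ together with the strong convergences above. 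The second integral is the delicate piece, since both factors converge only weakly. Here I exploit the discrete chain-rule identity
\[
(-\Delta z_k, z_k - z_{k-1})_{L^2(\Omega)} = \vphi(z_k) - \vphi(z_{k-1}) + \tfrac{1}{2} \|\nabla(z_k - z_{k-1})\|_{L^2(\Omega)}^2,
\]
with $\vphi$ as in \eqref{eq:phi}; telescoping yields
\[
-\int_0^T (\Delta \ztaubar, \partial_t \ztau)_{L^2(\Omega)} \d t = \vphi(\ztau(T)) - \vphi(z_0) + \tfrac{1}{2} \sum_{k=1}^m \|\nabla(z_k - z_{k-1})\|_{L^2(\Omega)}^2.
\]

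The correction sum is bounded by $C\tau$ via Lemma \ref{lem:zkd}, while $\vphi(\ztau(T)) \to \vphi(z(T))$ by strong convergence in $C([0,T];V)$. Combined with the continuous chain rule \eqref{eq:chain} applied to $z$, this gives $\int_0^T (\Delta \ztaubar, \partial_t \ztau)_{L^2(\Omega)} \d t \to \int_0^T (\Delta z, \partial_t z)_{L^2(\Omega)} \d t$. Consequently $\int_0^T \!\int_\Omega \eta\, \partial_t z \, \d x \, \d t = 0$, and since $\eta \geq 0$ and $\partial_t z \leq 0$ a.e.\ the integrand is nonpositive, so it must vanish pointwise a.e., establishing (iii). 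The hard part is genuinely the handling of the weak-weak product $(\Delta \ztaubar, \partial_t \ztau)_{L^2(\Omega)}$: the chain-rule argument works because $\vphi$ is convex and because $z \in L^2(0,T;X \cap V) \cap W^{1,2}(0,T;L^2(\Omega))$ has enough regularity to apply \eqref{eq:chain} on the continuous side.
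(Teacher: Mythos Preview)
Your proof is correct and follows essentially the same strategy as the paper: verify (i) and (iv) directly from the convergences, pass to the limit in the sign conditions for (ii), and handle the complementarity (iii) via a chain-rule argument on the Laplacian term combined with strong-weak pairing for the remaining terms. The only notable difference is that for the term $\int_0^T (\Delta \ztaubar,\partial_t \ztau)_{L^2(\Omega)}\,\d t$ the paper uses the convexity \emph{inequality} $(-\Delta z_k, z_k - z_{k-1}) \geq \vphi(z_k)-\vphi(z_{k-1})$, obtaining only a $\limsup$ bound which is then sandwiched against the nonpositivity coming from (ii); you instead use the exact discrete identity and dispose of the defect $\tfrac12\sum_k\|\nabla(z_k-z_{k-1})\|_{L^2}^2$ via Lemma~\ref{lem:zkd}, yielding genuine convergence of the integral. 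Your variant is slightly sharper but requires the additional $W^{1,2}(0,T;V)$ estimate on $\ztau$, whereas the paper's sandwich argument would go through with merely $W^{1,2}(0,T;L^2(\Omega))$ control; in the present setting both are available, so the distinction is purely cosmetic.
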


\begin{proof}
	We have already proved that $z \in W^{1,2} (0,T;V) \cap L^2 (0,T;X)$, which implies (i) of Definition \ref{defi:sol}.
	Moreover, we can deduce from Lemma \ref{lem:strong} that
	\begin{align*}
		\left\| z(0)-z_0 \right\|_{L^2(\Omega)} &= \left\| z(0)-z_\tau(0) \right\|_{L^2(\Omega)} \leq \sup_{t \in [0,T]} \left\| z(t) - z_\tau(t) \right\|_{L^2(\Omega)} \to 0 \quad \mbox{ as } \ \tau \to 0_+,
	\end{align*}
	which assures the condition (iv) of Definition \ref{defi:sol}.
	
	It still remains to check the conditions (ii) and (iii).
	Since $z_k$ is a strong solution to \eqref{eq:dis} for each $ k $, we deduce that
	\begin{alignat}{4}
		-\Delta \ztaubar + \lambda \ztaubar + \sigmataubar \gamma(\ztaubar) \leq \ftaubar, \quad \partial_t \ztau &\leq 0 & \quad &\mbox{ a.e.~in } \ \Omega \times (0,T), \notag \\
		\left( -\Delta \ztaubar + \lambda \ztaubar + \sigmataubar \gamma(\ztaubar) - \ftaubar \right) \partial_t \ztau &= 0 &&\mbox{ a.e.~in } \ \Omega \times (0,T) \label{eq:eqtau}
	\end{alignat}
	(see also \eqref{eq:reph}).
	Note that
	\begin{alignat}{4}
		\ftaubar &\to f & \quad & \mbox{ strongly in } \ L^2 (0,T;L^2(\Omega)) \label{ftau-conv},\\
		\sigmataubar &\to \sigma && \mbox{ strongly in } \ L^p(0,T;L^\infty(\Omega))\label{sigmatau-conv}
	\end{alignat}
	for any $ 1 \leq p < \infty $ as $ \tau \to 0_+ $ (see Proposition \ref{pro:f}).
	It follows from \eqref{eq:conv4} along with $ q'<q $ that $ \ztaubar(t) \to z(t) $ strongly in $ L^{q'}(\Omega) $ for a.e.~$ t \in (0,T) $, and thus we have
	\[
		\gamma(\ztaubar(t)) \to \gamma(z(t)) \quad \mbox{ strongly in } \ L^{q'}(\Omega) \quad \mbox{ for a.e.~} t \in (0,T)
	\]
	(see Subsection \ref{ssec:frechet}).
	Moreover, since \eqref{ass:gammau} implies that
	\begin{align*}
		\left\| \gamma (\ztaubar(t)) \right\|^{q'}_{L^{q'}(\Omega)} &\leq C_1 \, \Big( \left\| \ztaubar(t) \right\|^{q'}_{L^{q'}(\Omega)} + 1 \Big) \leq C_1 \, \Big( \left\| z \right\|^{q'}_{L^\infty(0,T;L^{q'}(\Omega))} + 1 \Big) < +\infty,
	\end{align*}
	one can deduce by the use of the dominated convergence theorem that
	\[
		\gamma(\ztaubar) \to \gamma(z) \quad \mbox{ strongly in } \ L^r(0,T;L^{q'}(\Omega))
	\]
	for any $ 1 \leq r < +\infty $ (for more details, see \cite{ch}).
	In particular, it follows from \eqref{sigmatau-conv} that
	\begin{equation}\label{eq:sigmagamma}
		\overline{\sigma}_\tau \gamma(\ztaubar) \to \sigma \gamma(z) \quad \mbox{ strongly in } \ L^2(0,T;L^{q'}(\Omega))
	\end{equation}
	as $ \tau \to 0_+ $.
	From the convergences obtained so far, it follows that
	\begin{equation}\label{eq:ineq}
		-\Delta z + \lambda z + \sigma \gamma(z) \leq f, \quad \partial_t z \leq 0 \quad \mbox{ a.e.~in } \ \Omega \times (0,T),
	\end{equation}
	which implies (ii). To prove (iii), we claim that
	\[
		0 =\int^T_0 \int_\Omega \left( \Delta z - \lambda z -\sigma \gamma(z) + f \right) \partial_t z \, \d x \, \d t.
	\]
	Indeed, recalling \eqref{def:ztau} and \eqref{def:ztaubar}, we observe that
	\begin{align*}
		\int^T_0 \int_\Omega \left( \Delta \overline{z}_{\tau} \right) \partial_t z_{\tau} \, \d x \, \d t 
		&= \sum^m_{k=1} \int_\Omega -\Delta z_k \left( -z_k + z_{k-1} \right) \d x\\
		&\leq \sum^m_{k=1} \int_\Omega \left( -\frac{1}{2} \left| \nabla z_k \right|^2 + \frac{1}{2} \left| \nabla z_{k-1} \right|^2 \right) \d x = -\vphi (z_{\tau}(T)) + \vphi (z_0),
	\end{align*}
	where $\vphi : L^2(\Omega) \to [0, +\infty]$ is the functional defined by \eqref{eq:phi}. Therefore, by the continuity of $\vphi$ in $V$ (see Subsection \ref{ssec:chain}), since $z_\tau(T) \to z(T)$ strongly in $V$, it follows that
	\begin{align*}
		\limsup_{\tau \to 0_+} \int^T_0 \int_\Omega \left( \Delta \overline{z}_\tau \right) \partial_t z_\tau \, \d x \,\d t 
		&\leq \limsup_{\tau \to 0_+} \left\{ -\vphi \left( z_\tau(T) \right) + \vphi (z_0) \right\}\\
		&= -\vphi (z(T)) + \vphi(z_0) = -\int^T_0 \frac{\d}{\d t} \, \vphi(z(t)) \,\d t\\
		&\stackrel{\text{\eqref{eq:chain}}}{=} \int^T_0 \int_\Omega \left(\Delta z\right) \partial_t z \, \d x \, \d t.
	\end{align*}
	Moreover, we find from \eqref{ftau-conv} that
	\[
		\lim_{\tau \to 0_+} \int^T_0 \int_\Omega \overline{f}_\tau \partial_t z_\tau \,\d x \,\d t = \int^T_0 \int_\Omega f \partial_t z \, \d x \, \d t.
	\]
	Furthermore, from \eqref{eq:sigmagamma}, we have
	\[
		\lim_{\tau \to 0_+} \int^T_0 \int_\Omega \overline{\sigma}_\tau \gamma(\ztaubar) \partial_t z_\tau \, \d x \, \d t = \int^T_0 \int_\Omega \sigma \gamma(z) \partial_t z \, \d x \, \d t.
	\]
	Combining all these facts, we obtain
	\begin{align*}
		0 &\overset{\text{\eqref{eq:eqtau}}}{=} \limsup_{\tau \to 0_+} \int^T_0 \int_\Omega \left( \Delta \overline{z}_\tau - \lambda \overline{z}_\tau - \sigma \gamma(\overline{z}_\tau) + \overline{f}_\tau \right) \partial_t z_\tau \, \d x \, \d t\\
		&\leq \int^T_0 \int_\Omega \left( \Delta z - \lambda z - \sigma \gamma(z) + f \right) \partial_t z \, \d x \, \d t \stackrel{\eqref{eq:ineq}}\leq 0.
	\end{align*}
	Hence (iii) follows, for the integrand is non-positive a.e.~in $\Omega \times (0,T)$ (see \eqref{eq:ineq}).
	Therefore the limit $ z $ complies with (i)--(iv) of Definition \ref{defi:sol}, where Theorem \ref{thm:main} has been proved.
\end{proof}

\begin{rem}[Comparison principle for time-dependent equation] \label{rem:comp}
	By virtue of Theorem \ref{thm:dis}, we can prove that \eqref{eq} and \eqref{bcic} comply with the comparison principle with respect to $ f $ and $ z_0 $, as long as the solution is restricted to be one obtained in Theorem \ref{thm:main}.
	In particular, the solution of \eqref{eq} and \eqref{bcic} which is obtained via \textit{the time-discretization scheme} is unique.
	On the other hand, the strategy depends on the way to construct solutions; thus we cannot directly apply the argument to show the uniqueness of solutions to \eqref{eq} and \eqref{bcic}.
	See also the proposition below for more details.
\end{rem}

\begin{prop}\label{prop:comp}
	Let $ \lambda \geq 0 $ be a constant and let $ \gamma: \R \to \R $ be a continuous function.
	Let $ \sigma \in L^\infty(\Omega \times (0,T)) \cap W^{1,2}(0,T;L^p(\Omega)) $, where $ p = \max \{ 1, N/2 \} $ if $ N \neq 2 $ and $ p>1 $ if $ N=2 $.
	Moreover, let $ z_0^j \in X \cap V $ and $ f^j \in L^2(0,T;L^2(\Omega)) \cap W^{1,2}(0,T;V^*) $ for $ j=1,2 $, and suppose that all these functions comply with {\rm (i)--(v)} of Theorem {\rm \ref{thm:main}} for each $ j $.
	Let $ z^j $ be a solution to \eqref{eq} and \eqref{bcic} with $ z_0 = z_0^j $ and $ f = f^j $ for $ j = 1,2 $ obtained via Theorem {\rm \ref{thm:main}}.
	Then it holds that
	\[
		z^1 \leq z^2 \quad \mbox{ a.e.~in } \ \Omega \times (0,T),
	\]
	provided that
	\[
		z_0^1 \leq z_0^2 \quad \mbox{ a.e.~in } \ \Omega \quad \mbox{ and } \quad f^1 \leq f^2 \quad \mbox{ a.e.~in } \ \Omega \times (0,T).
	\]
\end{prop}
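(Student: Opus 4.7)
The plan is to derive the comparison at the discrete level by Theorem \ref{thm:dis} and then pass to the limit via the convergences established in Subsection \ref{ssec:est}. Concretely, for each $j=1,2$ and each $m \in \N$ (so that $\tau=T/m$), I would construct the discrete solutions $\{z_k^j\}_{k=0}^m \subset X \cap V$ of \eqref{eq:dis}--\eqref{bcic:dis} with $z_0 = z_0^j$ and with $\{f_k^j\}$ defined as the time-averages of $f^j$ as in \eqref{def:fk2}, using the same discretization $\{\sigma_k\}$ for both indices $j$. Since $f^1 \leq f^2$ a.e.~in $\Omega \times (0,T)$, integration in time preserves the inequality and yields $f_k^1 \leq f_k^2$ a.e.~in $\Omega$ for every $k=1,\dots,m$.

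Next, I would argue by induction on $k$. The base case $z_0^1 \leq z_0^2$ is the hypothesis. Assuming $z_{k-1}^1 \leq z_{k-1}^2$ a.e.~in $\Omega$ and recalling $f_k^1 \leq f_k^2$, the discrete comparison principle of Theorem \ref{thm:dis} yields $z_k^1 \leq z_k^2$ a.e.~in $\Omega$. Now let $z_\tau^j$ denote the piecewise linear interpolants defined by \eqref{def:ztau}. On each subinterval $t \in (t_{k-1}, t_k]$, $z_\tau^j(x,t)$ is the convex combination $(1-\theta)\, z_{k-1}^j(x) + \theta\, z_k^j(x)$ with $\theta = (t-t_{k-1})/\tau \in (0,1]$; since the weight $\theta$ is the same for both $j=1,2$, the ordering is preserved and we obtain $z_\tau^1(x,t) \leq z_\tau^2(x,t)$ for a.e.~$(x,t) \in \Omega \times (0,T)$.

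Finally, since by hypothesis both $z^1$ and $z^2$ are obtained via the time-discretization scheme of Theorem \ref{thm:main}, there exist sequences $\tau_n \to 0_+$ along which $z_{\tau_n}^j \to z^j$ strongly in $C([0,T];V)$ for each $j=1,2$ (see Lemma \ref{lem:strong} and \eqref{eq:conv1}--\eqref{eq:conv4}). By a diagonal extraction one obtains a common subsequence along which this convergence holds simultaneously for $j=1$ and $j=2$, and extracting once more we may assume pointwise convergence a.e.~in $\Omega \times (0,T)$. Passing to the limit in $z_{\tau_n}^1 \leq z_{\tau_n}^2$ completes the proof.

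The only (mild) subtlety is the dependence on the scheme: the two solutions $z^j$ are, a priori, obtained as limits of possibly distinct subsequences, but passing to a common further subsequence resolves this without difficulty. This is precisely the restriction highlighted in Remark \ref{rem:comp}: since the uniqueness of strong solutions to \eqref{eq} and \eqref{bcic} is not established in full generality here, we cannot bypass the scheme, and the argument is truly a comparison between the \emph{constructed} solutions rather than between arbitrary ones.
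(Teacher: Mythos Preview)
Your proposal is correct and follows essentially the same approach as the paper: establish the discrete comparison $z_k^1 \leq z_k^2$ by induction via Theorem~\ref{thm:dis}, lift it to the interpolants, and pass to the limit using the strong convergence in $C([0,T];V)$. The one quibble is terminological: ``diagonal extraction'' is not the right tool for producing a common subsequence from two a priori unrelated ones, but the paper itself simply passes to the limit without dwelling on this point, and your closing remark correctly identifies that the argument compares \emph{constructed} solutions (so one may from the outset work along a single discretization sequence for both $j$).
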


\begin{proof}
	From the proof of Theorem \ref{thm:main}, there exists a strong solution $z^j=z^j(x,t)$ to \eqref{eq} and \eqref{bcic} with $ f = f^j $ and $ z_0 = z^j_0 $ for each $ j=1,2 $ which is obtained as a limit in $C([0,T];V)$ of the piecewise linear interpolant $ z^j_\tau $ of $ \{z^j_k\}_{k=0,\ldots,m} $, where $ z^j_k $ is a solution to the corresponding discretized problem \eqref{eq:dis} and \eqref{bcic:dis} for each $ k = 1,\dots,m $ and $ j = 1,2 $ (see \eqref{def:ztau} for the definition of $ \ztau $).
	Let $ \{ f^j_k \}_{k=1,\ldots,m} $ be the functions defined by \eqref{def:fk2} for each $ j $.
	Since one can check from the assumption that $f^1_k \leq f^2_k$ for $k=1,2,\dots,m$, we may iteratively deduce from Theorem \ref{thm:dis} that
	\[
		z^1_k \leq z^2_k \quad \mbox{ a.e.~in } \ \Omega \quad \mbox{ for } \ k=1,\dots,m;
	\]
	indeed, for the $k$-th step of the iteration, all the assumptions of Theorem \ref{thm:dis} are satisfied provided that $ z^1_{k-1} \leq z^2_{k-1} $ a.e.~in $ \Omega $.
	Moreover, by assumption, it also follows that $ z^1_0 \leq z^2_0 $ a.e.~in $ \Omega $.
	Thus we infer that
	\begin{equation}\label{eq:tau}
		z^1_\tau(\cdot,t) \leq z^2_\tau(\cdot,t) \quad \mbox{ a.e.~in } \ \Omega
	\end{equation}
	for all $ t \in [0,T] $ and $ \tau \in (0,1) $.
	Therefore we obtain the assertion by passing to the limit of \eqref{eq:tau} as $\tau \to 0_+$ recalling that $z^j_\tau \to z^j$ strongly in $C([0,T];V)$ for $j=1,2$.
\end{proof}

\section{Qualitative properties of solutions}\label{sec:qua}

The aim of this section is to prove Theorem \ref{thm:qua} and discuss the qualitative properties of strong solutions to \eqref{eq} and \eqref{bcic}.
Let $ \E : V \times [0,T] \to \R $ be the energy functional defined by \eqref{eq:energy}.

\begin{proof}[Proof of Theorem {\rm \ref{thm:qua}}]
	Let $z = z(x,t)$ be a strong solution to \eqref{eq} (or equivalently \eqref{vi}) and \eqref{bcic}.
	The irreversibility (i) follows immediately from \eqref{vi}.
	Hence we shall prove (ii) and (iii) below.
	Set $ \beta(s) = \gamma(s) + Ls $ for $ s \in \R $, where $ L>0 $ is a constant satisfying \eqref{ass:gammal} and \eqref{ass:gammau}.
	Then for a.e.~$t \in [0,T]$ and any $v \in V$ satisfying $v \leq z(t)$ a.e.~in $\Omega$, we find that
	\begin{align*}
		0 &\leq \int_\Omega \left( -\Delta z(t) + \lambda z(t) + \sigma(t) \gamma(z(t)) - f(t) \right) \left( v-z(t) \right) \d x\\
		&= \int_\Omega \left\{ -\Delta z(t) + (\lambda - L\sigma(t))z(t) + \sigma(t) \beta ( z(t) ) - f(t) \right\} \left( v-z(t) \right) \d x\\
		&\leq \frac{1}{2} \left\| \nabla v \right\|^2_{L^2(\Omega)} - \frac{1}{2} \left\| \nabla z(t) \right\|^2_{L^2(\Omega)} + \frac{1}{2} \int_\Omega \left( \lambda - L\sigma(t) \right) \left| v \right|^2 \d x - \frac{1}{2} \int_\Omega \left( \lambda - L\sigma(t) \right) \left| z(t) \right|^2 \d x\\
		&\quad + \int_\Omega \sigma(t) (\hat{\beta}(v) - \hat{\beta}(z(t))) \, \d x - \left\langle f(t), v \right\rangle_V + \left\langle f(t), z(t) \right\rangle_V\\
		&= \E (v,t) - \E (z(t),t).
	\end{align*}
	Furthermore, since $ \sigma \hat{\gamma}(z) \in W^{1,1}(0,T;L^1(\Omega)) $, the map $ t \mapsto \int_\Omega \sigma(t) \hat{\gamma}(z(t)) \, \d x $ is absolutely continuous on $ [0,T] $.
	Therefore $ t \mapsto \E(z(t),t) $ is absolutely continuous on $ [0,T] $ (see Subsection \ref{ssec:chain}), and thus we have 
	\[
		\E ( z(t),t ) \leq \E ( v,t ) \quad \mbox{ for any } \ t \in [0,T],
	\]
	provided that $v \in V$ and $v \leq z(t)$ a.e.~in $\Omega$.
	Indeed, let $ t_0 \in [0,T] $ be fixed and let $ v \in V $ be such that $ v \leq z(t_0) $ a.e.~in $ \Omega $. One can take a subsequence $ (t_n) $ in $ (0,t_0) $ such that the above relation holds at each $ t=t_n $ and $ t_n \uparrow t_0 $. Then noting by (i) and $ t_n < t_0 $ that $ v \leq z(t_0) \leq z(t_n) $ a.e.~in $ \Omega $, we infer that
	\[
		\E(z(t_n),t_n) \leq \E(v,t_n) \quad \mbox{ for all } \ n \in \N.
	\]
	Hence employing the fact that $ z \in C([0,T];V) $ and $ f \in W^{1,2}(0,T;V^*) \subset C([0,T];V^*) $ and passing to the limit as $ n \to \infty $, we obtain the desired relation at $ t=t_0 $.
	Thus (ii) follows.

	For the absolute continuity of the function $ t \mapsto \E(z(t),t) $, see Subsection \ref{ssec:chain} and Appendix \ref{appendix:ac}.
	Using \eqref{eq:ac}, we have
	\begin{align*}
		\frac{\d}{\d t} \, \E ( z(t),t ) &= \int_\Omega \left( -\Delta z(t) + \lambda z(t) + \sigma(t) \gamma(z(t)) -f(t) \right) \partial_t z(t) \,\d x\\
		&\quad + \int_\Omega \partial_t \sigma(t) \hat{\gamma}(z(t)) \, \d x - \left\langle \partial_t f(t), z(t) \right\rangle_V\\
		&= \int_\Omega \partial_t \sigma(t) \hat{\gamma}(z(t)) \, \d x - \left\langle \partial_t f(t), z(t) \right\rangle_V 
	\end{align*}
	for a.e.~$t \in (0,T)$. Here we also used (iii) of Definition \ref{defi:sol}. Hence it follows that
	\[
		\E ( z(t),t ) - \E ( z(s),s ) = \int^t_s \int_\Omega \partial_t \sigma(r) \hat{\gamma}(z(r)) \, \d x \, \d r - \int^t_s \left\langle \partial_t f(r), z(r) \right\rangle_V \d r
	\]
	for all $t,s \in [0,T]$. The proof is completed.
\end{proof}

\section{Global-in-time solutions}\label{sec:global}

This section is concerned with global-in-time solutions to the initial-boundary value problems \eqref{eq} (or \eqref{vi}) and \eqref{bcic}.
We shall first prove Theorem \ref{thm:global} to show the existence of global-in-time solutions for the initial-boundary problem.

\begin{proof}[Proof of Theorem {\rm \ref{thm:global}}]
	Fix $T>1$.
	Thanks to Theorem \ref{thm:main}, there exists a strong solution $ Z_1 \in W^{1,2}(0,T;V) \cap L^2(0,T;X) $ to \eqref{eq} and \eqref{bcic} on $ [0,T] $ with $ f $ and $ \sigma $ replaced by $ \left. f \right|_{(0,T)} $ and $ \left. \sigma \right|_{(0,T)} $, respectively.
	Let $ T_1 \in (1,T) $ be such that $ Z_1(T_1) \in X \cap V $.
	As in the previous step, one can obtain a strong solution $ Z_2 \in W^{1,2}(T_1,T_1+T;V) \cap L^2(T_1,T_1+T;X) $ to \eqref{eq} and \eqref{bcic} on $[T_1,T_1+T]$ with $ f $ and $ \sigma $ replaced by $ f = \left. f \right|_{(T_1,T_1+T)} $ and $ \sigma = \left. \sigma \right|_{(T_1,T_1+T)} $, respectively, with initial value $ Z_2(T_1) = Z_1(T_1) $.
	Here, we note that $ T>0 $ can be chosen arbitrary in Theorem \ref{thm:main}.
	Let $ T_2 \in (T_1+1,T_1+T) $ be such that $ Z_2(T_2) \in X \cap V $.
	By induction, one can define $ T_j \in (T_{j-1}+1, T_{j-1}+T) $ and $ Z_j \in W^{1,2}(T_{j-1},T_j;V) \cap L^2(T_{j-1},T_j;X) $ for $ j=1,2,\dots $, where $ Z_j $ is a strong solution to \eqref{eq} and \eqref{bcic} on $ [T_{j-1},T_j] $ satisfying that $ Z_j(T_{j-1}) = Z_{j-1}(T_{j-1}) $ a.e.~in $ \Omega $ for each $ j $ (here we set $ T_0 := 0 $ and $ Z_0(0) := z_0 $).
	In particular, it follows that $ T_j > j $ for $ j = 1,2,\dots $, since $ T_j > T_{j-1} + 1 $ and $ T_1 > 1 $.
	Then a function $ z: \Omega \times (0,\infty) \to \R $ defined by $ z(t) := Z_j(t) $ for $ t \in [T_{j-1},T_j] $ and $ j \in \N $, turns out to be a global-in-time strong solution to \eqref{eq} and \eqref{bcic}, and moreover, it satisfies $ z \in W^{1,2}_\loc \left( [0,\infty);V \right) \cap L^2_\loc \left( [0,\infty);X \right) $.
\end{proof}

We shall next give a proof of Theorem \ref{thm:asympt} concerning the long-time behavior of global-in-time strong solutions.
Before that, let us give a couple of remarks on the theorem:

\begin{rem}\label{rem:global}
	\begin{enumerate}
		\item To prove the convergence of global-in-time solutions of \eqref{eq} and \eqref{bcic} to a stationary limit, we need to consider a solution which is constructed via the minimizing movement scheme as in Section \ref{sec:main}; indeed, the key estimates in Lemma \ref{lem:est} are derived from some estimates for discretized solutions $ \{ z_k \} $ (see \eqref{est:zk}, \eqref{est:zkd} and \eqref{est:zkl}).
		\item In the case where $ \sigma $ depends on both $ x $ and $ t $, Theorem \ref{thm:asympt} holds true if the assumptions (vii) and \eqref{ass:fglobal} are replaced by the following: suppose that there exists a function $ \sigma_\infty \in L^\infty(\Omega) $ such that $ \sigma(t) \to \sigma_\infty $ strongly in $ L^\infty(\Omega) $, and it holds that
		\begin{equation}\label{eq:xi}
			f_\infty - \lambda \xi - \sigma_\infty \gamma(\xi) \leq f(t) - \lambda z_0 - \sigma(t) \gamma(z_0) \quad \mbox{ a.e.~in } \ \Omega \times (0,\infty),
		\end{equation}
		where $ \xi \in X \cap V $ is a (unique) strong solution to \eqref{limvi} and \eqref{limbc}.
	\end{enumerate}
\end{rem}

The following lemma gives a priori estimates of global-in-time solution $ z $ in $ \R_+ = (0,\infty) $.

\begin{lem}\label{lem:est}
	Suppose that all the assumptions of Theorem {\rm \ref{thm:asympt}} are fulfilled and let $ z : \Omega \times (0,\infty) \to \R $ be a global-in-time strong solution to \eqref{eq} {\rm (}or equivalently \eqref{vi}{\rm )} and \eqref{bcic} obtained by Theorem {\rm \ref{thm:global}} {\rm (}via the time-discretization scheme exhibited in Section {\rm \ref{sec:main}}{\rm )}.
	Then there exists a constant $ C>0 $ independent of $ z $ and $ t $ such that
	\begin{align}
		&\left\| z(t) \right\|_V \leq C, \notag \\
		&\left\| \partial_t z(t) \right\|^2_V \leq C \left\| \partial_t f(t) \right\|^2_{V^*}, \notag \\
		&\left\| \Delta z(t) \right\|^2_{L^2(\Omega)} \leq C \left( \| f_\infty \|^2_{L^2(\Omega)} + \left\| z_0 \right\|^2_X + 1 \right) \label{eq:j3}
	\end{align}
	for a.e.~$ t \in \R_+ $, where $ f_\infty $ is the limit function given in {\rm (viii)} of Theorem {\rm \ref{thm:asympt}}.
	In particular, it holds that $ z \in L^\infty(0,\infty;V) $ and $ \partial_t z \in L^2(0,\infty;V) $.
\end{lem}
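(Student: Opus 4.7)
The strategy is to revisit the discrete a priori estimates of Subsection~\ref{ssec:est} and observe that, under the global integrability~(vi) of Theorem~\ref{thm:global} and the stationarity~(vii) of~$\sigma$, the constants there become uniform in the time horizon~$T$. Indeed, the quantity
\[
\phi_{f,\sigma,T} = \phi\bigl(\|\partial_t f\|_{L^1(0,T;V^*)} + \|\partial_t\sigma\|_{L^1(0,T;L^p(\Omega))}\bigr)
\]
governing all of \eqref{est:zk}, \eqref{est:zkd} and \eqref{est:zkl} satisfies, under~(vi)--(vii), $\phi_{f,\sigma,T}\le\phi_\infty:=\phi(\|\partial_t f\|_{L^1(0,\infty;V^*)})$ independently of $T>0$, since $\partial_t\sigma\equiv 0$ by~(vii) and $\partial_t f\in L^1(0,\infty;V^*)$ by~(vi). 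Applying Lemma~\ref{lem:zk} on an arbitrary $[0,T]$ and passing to the limit through the strong convergence $z_\tau\to z$ in $C([0,T];V)$ supplied by Lemma~\ref{lem:strong} then yields $\|z(t)\|_V\le\phi_\infty$ uniformly on $(0,\infty)$.

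For the estimate on $\partial_t z$, Lemma~\ref{lem:zkd} combined with $\partial_t\sigma=0$ provides, pointwise on each interval $(t_{k-1},t_k]$,
\[
\|\partial_t z_\tau(t)\|_V^2 \le C\phi_\infty^2\,\|\partial_t f_\tau(t)\|_{V^*}^2.
\]
Integrating on an arbitrary $[a,b]\subset(0,\infty)$ and passing to the limit via the weak convergence $\partial_t z_\tau\wto\partial_t z$ in $L^2(a,b;V)$ together with weak lower semicontinuity, and the strong convergence $\partial_t f_\tau\to\partial_t f$ in $L^2(a,b;V^*)$ supplied by Proposition~\ref{pro:f}, yields
\[
\int_a^b\|\partial_t z(s)\|_V^2\,\d s\le C\phi_\infty^2\int_a^b\|\partial_t f(s)\|_{V^*}^2\,\d s.
\]
Arbitrariness of $[a,b]$ and the Lebesgue differentiation theorem then give the claimed pointwise a.e.\ estimate, and integration over $(0,\infty)$ produces $\partial_t z\in L^2(0,\infty;V)$.

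The Lewy--Stampacchia iteration at the core of the proof of Lemma~\ref{lem:zkl} (employing the monotonicity $z_k\le\cdots\le z_0$ together with \eqref{ass:gammau}) furnishes the pointwise bound
\[
\|\Delta z_k\|_{L^2(\Omega)}^2 \le C\bigl(\|z_k\|_V^2 + \|f_k\|_{L^2(\Omega)}^2 + \|\tilde f\|_{L^2(\Omega)}^2 + \|z_0\|_X^2 + 1\bigr).
\]
The first summand is uniformly controlled by the first step. Under~(viii), $\sup_{t\ge 0}\|f(t)\|_{L^2(\Omega)}\le\|f_\infty\|_{L^2(\Omega)}+C_*$ for a constant $C_*$ depending only on the convergence rate of $f(t)$ to $f_\infty$, so $\|f_k\|_{L^2(\Omega)}$ inherits the same bound by Jensen's inequality; similarly $\tilde f$ can be chosen so that $\|\tilde f\|_{L^2(\Omega)}^2$ is absorbed into the constant~$C$. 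Integrating over $[a,b]\subset(0,\infty)$, using the weak convergence $\overline z_\tau\wto z$ in $L^2(a,b;X)$ with weak lower semicontinuity of the $L^2$-norm, and applying Lebesgue differentiation as before, yields the pointwise a.e.\ bound~\eqref{eq:j3}.

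The main technical obstacle lies in transferring the interpolant-level estimates to pointwise a.e.\ bounds on the continuous-time limit $z$: this is handled by integrating over arbitrary subintervals, invoking weak lower semicontinuity, and concluding via the Lebesgue differentiation theorem. Identifying $\phi_\infty$ as the $T$-independent constant requires consistent exploitation of~(vi)--(vii), and the precise form of~\eqref{eq:j3} uses, in addition, the uniform control of $\|f(t)\|_{L^2(\Omega)}$ by $\|f_\infty\|_{L^2(\Omega)}$ afforded by~(viii).
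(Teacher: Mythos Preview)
Your proposal is correct and matches the paper's approach: revisit the discrete estimates of Lemmas~\ref{lem:zk}--\ref{lem:zkl}, note that assumptions (vi)--(viii) render $\phi_{f,\sigma,T}$ and the remaining constants independent of $T$, and pass to the limit via weak lower semicontinuity (your explicit integration over $[a,b]$ followed by Lebesgue differentiation being slightly more detailed than the paper's sketch). One small caveat: Proposition~\ref{pro:f} concerns $\bar f_\tau \to f$, not $\partial_t f_\tau \to \partial_t f$, so you should either prove the latter convergence directly by an analogous density argument, or simply invoke the $L^2$-bound of Proposition~\ref{prop:inter} on subintervals, which costs only a harmless multiplicative factor in the constant.
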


\begin{proof}
	Let $ T>0 $, $ m \in \N $ and $ \tau = T/m > 0 $.
	Set $ t_k = k \tau $ for $ k=1,\dots,m $ and define $ f_k $ for $ k=0,1,\dots,m $ by \eqref{def:fk2} and \eqref{def:f02}.
	Let $ z_k \in X \cap V $ be a strong solution to \eqref{eq:dis} and \eqref{bcic:dis} for $ k=1,\dots,m $.
	According to Lemmas \ref{lem:zk}--\ref{lem:zkl}, the interpolants $ (\ztau) $ and $ (\ztaubar) $ of $ \{ z_k \} $, which are defined by \eqref{def:ztau} and \eqref{def:ztaubar}, satisfy
	\begin{align*}
		&\left\| \ztaubar(t) \right\|_V \leq \phi_{f,T},\\
		&\left\| \partial_t \ztau(t) \right\|^2_V \leq C \phi_{f,T}^2 \left\| \partial_t \ftau(t) \right\|^2_{V^*},\\
		&\left\| \Delta \ztaubar(t) \right\|^2_{L^2(\Omega)} \leq C \left( \left\| \ztaubar(t) \right\|^2_V + \left\| \ftaubar(t) \right\|^2_{L^2(\Omega)} + \| f_\infty \|^2_{L^2(\Omega)} + \left\| z_0 \right\|^2_X + 1 \right)
	\end{align*}
	for a.e.~$ t \in (0,T) $, where
	\[
		\phi_{f,T} := \phi \left( \left\| \partial_t f \right\|_{L^1(0,T;V^*)} \right)
	\]
	(see \eqref{eq:defphi} for definition of $ \phi $), and $ C>0 $ is a constant independent of $ T $.
	Moreover, $ \phi_{f,T} $ is bounded with respect to $ T $ since $ \partial_t f \in L^1(0,\infty;V^*) $.
	Hence, by virtue of Proposition \ref{prop:inter}, we can extract a (not relabeled) subsequence of $ (\tau) $ such that
	\begin{alignat*}{4}
		\ztau &\to z & \quad &\mbox{ weakly in } \ W^{1,2}(0,T;V),\\
		&&&\mbox{ strongly in } \ C([0,T];V),\\
		\ztaubar &\to z &&\mbox{ weakly in } \ L^2(0,T;X),\\
		&&&\mbox{ strongly in } \ L^\infty(0,T;V).
	\end{alignat*}
	Therefore, combined with the lower-semicontinuity of the $ L^2 $-norm, it follows that
	\begin{align*}
		\left\| z(t) \right\|_V &\leq C,\\
		\left\| \partial_t z(t) \right\|^2_V &\leq C \left\| \partial_t f(t) \right\|^2_{V^*},\\
		\left\| \Delta z(t) \right\|^2_{L^2(\Omega)} &\leq C \left( \left\| f(t) \right\|^2_{L^2(\Omega)} + \| f_\infty \|^2_{L^2(\Omega)} + \left\| z_0 \right\|^2_X + 1 \right)\\
		&\leq C \left( \left\| f_\infty \right\|^2_{L^2(\Omega)} + \left\| z_0 \right\|^2_X + 1 \right)
	\end{align*}
	for a.e.~$ t \in (0,T) $, where $ C>0 $ is independent of $ T $ (see also Propositions \ref{prop:inter} and \ref{pro:f} in Appendix \ref{appendix:inter}).
	From the arbitrariness of $ T $, the lemma follows.
\end{proof}

We are now in a position to prove Theorem \ref{thm:asympt}.

\begin{proof}[Proof of Theorem {\rm \ref{thm:asympt}}]
	Let $ z $ be a global-in-time strong solution to \eqref{eq} and \eqref{bcic} constructed via the time-discretization scheme (see Theorem \ref{thm:global} and Section \ref{sec:main}).
	Since the family $(-\Delta z(t))_{t \in \R_+}$ is bounded in $ L^2(\Omega) $ by virtue of \eqref{eq:j3}, there exist a sequence $ (s_n) \subset \R_+ $ and $z_\infty \in X \cap V$ such that $ s_1 < s_2 < \dots < s_n \to +\infty $ and
	\begin{alignat}{4}
		z(s_n) &\to z_\infty & \quad &\mbox{ strongly in } \ V, \label{eq:zsnst} \\
		\Delta z(s_n) &\to \Delta z_\infty &&\mbox{ weakly in } \ L^2(\Omega). \label{eq:zsnw}
	\end{alignat}
	Furthermore, since $ z $ is nonincreasing in $ t $, it follows that $ z(t) \to z_\infty $ strongly in $ V $; indeed, let $ (r_n) $ be a sequence in $ \R_+ $ which satisfies that $ r_1 < r_2 < \dots < r_n\to +\infty $.
	By a compactness argument similar as above, one can take a (not relabeled) subsequence of $ (n) $ and $ \tilde{z}_\infty \in X \cap V $ such that $ z(r_n) \to \tilde{z}_\infty $ strongly in $ V $.
	However, since $ (s_n) $ grows monotonically, we can extract a further (not relabeled) subsequence of $ (n) $ such that $ r_n < s_n $ for all $ n \in \N $.
	Then it follows from the nonincrease of $ z $ in $ t $ that $ z(s_n) \leq z(r_n) $ a.e.~in $ \Omega $, and thus $ z_\infty \leq \tilde{z}_\infty $ a.e.~in $ \Omega $.
	On the other hand, we can also prove that $ z_\infty \geq \tilde{z}_\infty $ a.e.~in $ \Omega $ by a similar argument.
	Therefore $ z_\infty = \tilde{z}_\infty $ a.e.~in $ \Omega $, and in particular, we can deduce from the convergences \eqref{eq:zsnst} and \eqref{eq:zsnw} that
	\begin{equation}\label{eq:zinfvi}
		z_\infty \leq z_0 \quad \mbox{ and } \quad {-\Delta} z_\infty + \lambda z_\infty + \tilde{\sigma} \gamma(z_\infty) \leq f_\infty \quad \mbox{ a.e.~in } \ \Omega,
	\end{equation}
	where we note that $ \gamma(z(t)) \to \gamma(z_\infty) $ strongly in $ L^q(\Omega) $ by virtue of \eqref{ass:gammau}.
	
	Finally we prove that $ z_\infty $ complies with \eqref{limvi} and \eqref{limbc}.
	Let $ \xi \in X \cap V $ be the (unique) strong solution to \eqref{limvi} and \eqref{limbc}, where Theorem \ref{thm:dis} can also be applied to prove the existence of such $ \xi $ with only obvious modifications.
	We shall show that $ z_\infty = \xi $, provided that \eqref{ass:fglobal} is satisfied.
	Indeed, $ \Xi(t) := \xi $ turns out to be a strong solution to \eqref{eq} and \eqref{bcic} with $ \sigma = \tilde{\sigma} $, $ f(t) = f_\infty $ and $ z_0 = \xi $.
	Applying the comparison principle for \eqref{eq} and \eqref{bcic} (see Proposition \ref{prop:comp}) and using \eqref{ass:fglobal} along with $ z(t) \leq z_0 $, we infer that
	\[
		\xi(x) = \Xi(x,t) \leq z(x,t) \quad \mbox{ for a.e.~} (x,t) \in \Omega \times \R_+. 
	\]
	Hence passing to the limit as $t \to +\infty$, we obtain
	\[
		\xi \leq z_\infty \quad \mbox{ a.e.~in } \ \Omega.
	\]
	On the other hand, it is obvious that $ w = z_\infty $ is a solution to
	\begin{equation}\label{eq:psig}
		\left\lbrace
		\begin{alignedat}{4}
			&\dind(w-\psi) - \Delta w + \lambda w + \tilde{\sigma} \gamma(w) \ni g \quad &&\mbox{ in } \ \Omega,\\
			&w = 0 &&\mbox{ on } \ \ddom,\\
			&\partial_\nu w = 0 &&\mbox{ on } \ \ndom
		\end{alignedat}
		\right.
	\end{equation}
	with $ \psi = z_\infty $ and $ g = -\Delta z_\infty + \lambda z_\infty + \tilde{\sigma} \gamma(z_\infty) $.
	Moreover, one can check that \eqref{eq:psig} enjoys the comparison principle with respect to $ \psi $ and $ g $ similarly to the proof of Theorem \ref{thm:dis}.
	Since $ \xi $ is a solution to \eqref{eq:psig} with $ \psi = z_0 $ and $ g = f_\infty $, it follows from \eqref{eq:zinfvi} that
	\[
		z_\infty \leq \xi \quad \mbox{ a.e.~in } \ \Omega.
	\]
	Thus we obtain $ z_\infty = \xi $, which completes the proof.
\end{proof}

\section*{Acknowledgments}

This is a part of the author's Ph.D.~thesis.
The author would like to express his sincere gratitude to his supervisor Professor Goro Akagi for his valuable advice and constant help.
The author is supported by Grant-in-Aid for JSPS fellows (No.~JP21J20732).
This work was also supported by the Research Institute for Mathematical Sciences, an International Joint Usage/Research Center located in Kyoto University.

\appendix

\section{Estimate for interpolation functions}\label{appendix:inter}

In this section, we show the boundedness from above of interpolant functions.
Let $ B $ be a general Banach space with norm $ \| \cdot \|_B $ and let $m \in \N$, $ T>0 $, $\tau := T/m$ and $ t_k := k \tau $ for $ k=1,\dots,m $.
Let $ 1 \leq p < \infty $.
For $ f \in L^p(0,T;B) $, set
\begin{align*}
	f_k := \frac{1}{\tau} \int^{t_k}_{t_{k-1}} f(s) \, \d s \quad \mbox{ for } \ k=1, \dots, m,
\end{align*}
and moreover, if $ f \in C([0,T];B) $, set $ f_0 := f(0) $.
Define piecewise linear and constant interpolants of $ \{ f_k \} $ by
\begin{align*}
	\ftau(t) &:= f_{k-1} + \frac{t - t_{k-1}}{\tau} \left( f_k-f_{k-1} \right),\\
	\ftaubar(t) &:= f_k
\end{align*}
for $t_{k-1} < t \leq t_k$, $ k=1,\dots,m $, respectively.
Furthermore, we set $ \ftau(\cdot,0) = f_0 $.

\begin{prop}\label{prop:inter}
	Assume that $ f \in W^{1,p}(0,T;B) $ {\rm (}$ \subset C([0,T];B) ${\rm )}.
	For $ m \in \N $ and $ p \geq 1 $, it holds that
	\begin{align*}
		\tau \sum^m_{k=1} \left\| \frac{f_k-f_{k-1}}{\tau} \right\|^p_B \leq 2^p \int^T_0 \left\| \partial_t f(t) \right\|^p_B \d t.
	\end{align*}
\end{prop}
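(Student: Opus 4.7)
The plan is to control each difference $f_k - f_{k-1}$ by a local integral of $\partial_t f$ via the fundamental theorem of calculus, then to move the $p$th power inside the integral using Jensen's inequality, and finally to sum over $k$ while keeping careful track of the overlap between the intervals that appear on the right-hand side.

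For $k = 2, \dots, m$, I will start from the shift identity
\[
	f_k - f_{k-1} = \frac{1}{\tau}\int_{t_{k-1}}^{t_k}\bigl(f(s) - f(s-\tau)\bigr)\,\d s = \frac{1}{\tau}\int_{t_{k-1}}^{t_k}\int_{s-\tau}^{s}\partial_t f(r)\,\d r\,\d s,
\]
so that $(f_k - f_{k-1})/\tau$ is the average of $\partial_t f$ over a subset of the $(s,r)$-plane of measure $\tau^2$. Jensen's inequality (for $p \geq 1$) together with Fubini's theorem then gives
\[
	\left\|\frac{f_k - f_{k-1}}{\tau}\right\|_B^p \leq \frac{1}{\tau^2}\int_{t_{k-2}}^{t_k}w_k(r)\,\|\partial_t f(r)\|_B^p\,\d r,
\]
where $w_k$ is the tent function $w_k(r) = r - t_{k-2}$ on $(t_{k-2},t_{k-1})$ and $w_k(r) = t_k - r$ on $(t_{k-1},t_k)$. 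Multiplying by $\tau$ and summing, I will verify the overlap estimate
\[
	W(r) := \sum_{k=2}^m w_k(r)\,\chi_{(t_{k-2}, t_k)}(r) \leq \tau \quad \text{for every } r \in (0,T),
\]
which holds because on an interior subinterval $(t_{j-1}, t_j)$ exactly two indices $k = j$ and $k = j+1$ contribute, and $(t_j - r) + (r - t_{j-1}) = \tau$; at the two end subintervals only one index contributes and $w_k \leq \tau$. This immediately yields $\tau\sum_{k=2}^m\|(f_k-f_{k-1})/\tau\|_B^p \leq \int_0^T\|\partial_t f\|_B^p\,\d r$.

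The boundary term $k=1$ is slightly different because $f_0 = f(0)$ is a point value, not an average. Writing $f_1 - f_0 = \tau^{-1}\int_0^\tau(f(s) - f(0))\,\d s$ and using $f(s) - f(0) = \int_0^s \partial_t f(r)\,\d r$, the triangle inequality gives $\|f_1 - f_0\|_B \leq \int_0^\tau\|\partial_t f(r)\|_B\,\d r$; H\"older's inequality combined with the factor $\tau^{-p}$ then produces $\tau\|(f_1 - f_0)/\tau\|_B^p \leq \int_0^\tau\|\partial_t f\|_B^p\,\d r$. Adding this to the estimate of the previous paragraph gives a total factor of at most $2$, which is bounded by $2^p$ for $p \geq 1$. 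The only genuine technical point is the overlap bound $W(r) \leq \tau$; the perfectly interlocking tents make this a one-line check, so I do not expect any serious obstacle beyond bookkeeping.
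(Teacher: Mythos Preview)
Your proof is correct and follows the same route as the paper: the shift identity for $k \geq 2$, Jensen's inequality, and the separate treatment of the boundary term $k=1$. Your use of the tent weight $w_k$ together with the overlap identity $\sum_k w_k \leq \tau$ is a refinement of the paper's cruder step (which simply bounds the inner integral by $\int_{t_{k-2}}^{t_k}\|\partial_t f\|_B\,\d r$ and then applies Jensen on an interval of length $2\tau$, picking up $2^{p-1}$, and a further factor $2$ from double counting in the sum); in particular your argument actually yields the sharper constant $2$ in place of $2^p$.
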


\begin{proof}
	For $ 2 \leq k \leq m $, 
	\begin{align*}
	 	\left\| \frac{f_k-f_{k-1}}{\tau} \right\|_B &= \frac{1}{\tau^2} \left\| \int^{t_k}_{t_{k-1}} f(s) \, \d s - \int^{t_{k-1}}_{t_{k-2}} f(s) \, \d s \right\|_B \\
		&= \frac{1}{\tau^2} \left\| \int^{t_k}_{t_{k-1}} \left( f(s) - f(s-\tau) \right) \d s \right\|_B \\
		&\leq \frac{1}{\tau^2} \int^{t_k}_{t_{k-1}} \int^s_{s-\tau} \left\| \partial_t f(r) \right\|_B \d r \, \d s \leq \frac{1}{\tau} \int^{t_k}_{t_{k-2}} \left\| \partial_t f(r) \right\|_B \d r.
	\end{align*}
	Similarly, we have
	\[
	 	\left\| \frac{f_1 - f_0}{\tau} \right\|_B \leq \frac{1}{\tau} \int^\tau_0 \left\| \partial_t f(r) \right\|_B \d r.
	\]
	Therefore one can deduce that
	\begin{align*}
		\tau \sum^m_{k=1} \left\| \frac{f_k-f_{k-1}}{\tau} \right\|^p_B &\leq \tau \left( \frac{1}{\tau} \int^\tau_0 \left\| \partial_t f(r) \right\|_B \d r \right)^p + \tau \sum^m_{k=2} \left( \frac{1}{\tau} \int^{t_k}_{t_{k-2}} \left\| \partial_t f(r) \right\|_B \d r \right)^p\\
		&\leq \int^\tau_0 \left\| \partial_t f(r) \right\|^p_B \d r + 2^{p-1} \sum^m_{k=2} \int^{t_k}_{t_{k-2}} \left\| \partial_t f(r) \right\|^p_B \d r\\
		&\leq 2^p \int^T_0 \left\| \partial_t f(r) \right\|^p_B \d r,
	\end{align*}
	which completes the proof.
\end{proof}

Furthermore, we have

\begin{prop}\label{pro:f}
	It holds that
	\begin{alignat*}{4}
		\ftaubar &\to f & \quad &\mbox{ strongly in } \ L^p(0,T;B)
	\end{alignat*}
	as $\tau \to 0_+$.
\end{prop}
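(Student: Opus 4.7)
The strategy is the usual three-step approximation scheme for Lebesgue-Bochner spaces: establish a uniform operator bound, verify convergence on a dense subclass, then combine.

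First, I would show that the map $T_\tau : L^p(0,T;B) \to L^p(0,T;B)$, $f \mapsto \ftaubar$, is a contraction uniformly in $\tau$. On each subinterval $(t_{k-1},t_k]$, Jensen's inequality in $B$ gives
\[
	\tau \left\| f_k \right\|_B^p
	= \tau \left\| \frac{1}{\tau} \int_{t_{k-1}}^{t_k} f(s) \, \d s \right\|_B^p
	\leq \int_{t_{k-1}}^{t_k} \left\| f(s) \right\|_B^p \d s,
\]
so summing over $k=1,\dots,m$ yields $\|\ftaubar\|_{L^p(0,T;B)} \leq \|f\|_{L^p(0,T;B)}$.

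Second, I would verify the conclusion on a dense subclass, namely continuous $B$-valued functions (or, if one prefers to avoid separability issues, step functions subordinated to partitions into intervals). For $f \in C([0,T];B)$ the function is uniformly continuous on $[0,T]$, and for $t \in (t_{k-1}, t_k]$
\[
	\left\| \ftaubar(t) - f(t) \right\|_B
	= \left\| \frac{1}{\tau} \int_{t_{k-1}}^{t_k} \left( f(s) - f(t) \right) \d s \right\|_B
	\leq \sup_{s \in [t_{k-1}, t_k]} \left\| f(s) - f(t) \right\|_B
	\leq \omega_f(\tau),
\]
where $\omega_f$ is the modulus of continuity of $f$. Hence $\|\ftaubar - f\|_{L^\infty(0,T;B)} \leq \omega_f(\tau) \to 0$, and a fortiori $\ftaubar \to f$ in $L^p(0,T;B)$.

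Third, I would combine the two steps via a standard $\varepsilon/3$-argument. Given $f \in L^p(0,T;B)$ and $\varepsilon > 0$, choose $g \in C([0,T];B)$ with $\|f-g\|_{L^p(0,T;B)} < \varepsilon/3$ (using density of continuous functions in $L^p(0,T;B)$). Writing $\overline{g}_\tau$ for the piecewise-constant interpolant of $g$ built in the same way, the triangle inequality and the contraction property from the first step give
\[
	\left\| \ftaubar - f \right\|_{L^p(0,T;B)}
	\leq \left\| \overline{(f-g)}_\tau \right\|_{L^p(0,T;B)} + \left\| \overline{g}_\tau - g \right\|_{L^p(0,T;B)} + \left\| g - f \right\|_{L^p(0,T;B)}
	\leq \frac{2\varepsilon}{3} + \left\| \overline{g}_\tau - g \right\|_{L^p(0,T;B)}.
\]
The last term tends to $0$ as $\tau \to 0_+$ by the second step, so $\limsup_{\tau \to 0_+} \|\ftaubar - f\|_{L^p(0,T;B)} \leq \varepsilon$, and since $\varepsilon$ was arbitrary, the claim follows. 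The only mild subtlety is justifying the density of $C([0,T];B)$ in $L^p(0,T;B)$, which in the general Bochner setting reduces to approximation by simple functions (whose interpolants converge by direct computation); this can be handled with step functions instead of continuous ones without changing the argument.
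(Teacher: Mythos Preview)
Your argument is correct and is the standard three-step density argument (uniform $L^p$-contraction via Jensen, convergence on $C([0,T];B)$ by uniform continuity, then an $\varepsilon/3$ combination). The paper itself does not give a proof of this proposition; it simply refers to \cite{e2ciec}*{Appendix A} (where the case $p=2$, $B=L^2(\Omega)$ is written out) and remarks that the same proof carries over to general $1 \leq p < \infty$ and general Banach $B$. Your write-up is precisely the ``obvious modification'' the paper alludes to, and your brief remark about handling density via simple/step functions in the nonseparable case is the right way to make the argument self-contained.
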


For a proof of Proposition \ref{pro:f}, see \cite{e2ciec}*{Appendix A}, where only the case $ p=2 $ and $ B = L^2(\Omega) $ is explicitly treated.
However, the proof can also be applied to $ L^p(0,T;B) $ with $ 1 \leq p < \infty $ and a general Banach space $ B $ with only obvious modifications.

\section{Absolute continuity of the energy functional in time}\label{appendix:ac}

As in \eqref{eq:energy}, we define
\[
	\E(u,t) := \frac{1}{2} \left\| \nabla u \right\|^2_{L^2(\Omega)} + \frac{\lambda}{2} \left\| u \right\|^2_{L^2(\Omega)} + \int_\Omega \sigma(t) \hat{\gamma}(u) \, \d x - \left\langle f(t), u \right\rangle_V
\]
for $ u \in V $ and $ t \in [0,T] $.
Let $ z \in W^{1,2}(0,T;V) \cap L^2(0,T;X) $.
To complete the proof of Theorem \ref{thm:qua}, we show that the function $ t \mapsto \E(z(t),t) $ is absolutely continuous on $ [0,T] $.

\begin{lem}\label{lem:ac}
	Let $ f $, $ \sigma $ and $ \gamma $ satisfy all the assumptions in Theorem {\rm \ref{thm:qua}} and let $ z \in W^{1,2}(0,T;V) \cap L^2(0,T;X) $.
	Then the function
	\[
		[0,T] \ni t \mapsto \E(z(t),t)
	\]
	is absolutely continuous, and moreover, it holds that
	\begin{align}
		\frac{\d}{\d t} \, \E(z(t),t) &= \left( -\Delta z(t) + \lambda z(t), \partial_t z(t) \right)_{L^2(\Omega)} + \left( \partial_t \sigma(t), \hat{\gamma}(z(t)) \right)_{L^2(\Omega)} \notag \\
		&\quad + \left( \sigma(t) \gamma(z(t)), \partial_t z(t) \right)_{L^2(\Omega)} - \left\langle \partial_t f(t), z(t) \right\rangle_V - \left( f(t), \partial_t z(t) \right)_{L^2(\Omega)}, \label{eq:ac}
	\end{align}
	where $ (\cdot, \cdot)_{L^2(\Omega)} $ denotes the inner product of $ L^2(\Omega) $, for a.e.~$ t \in (0,T) $.
\end{lem}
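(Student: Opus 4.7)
I would decompose $\E(z(t),t) = E_\nabla(t) + E_\lambda(t) + E_\sigma(t) - E_f(t)$, where
\[
	E_\nabla(t) := \tfrac12\|\nabla z(t)\|_{L^2(\Omega)}^2,\ \ E_\lambda(t) := \tfrac\lambda2\|z(t)\|_{L^2(\Omega)}^2,\ \ E_\sigma(t) := \int_\Omega \sigma(t)\hat\gamma(z(t))\,\d x,\ \ E_f(t) := \langle f(t), z(t)\rangle_V,
\]
and verify absolute continuity together with the chain rule on each piece separately. The term $E_\nabla$ is covered by the identity \eqref{eq:chain} recalled in Subsection~\ref{ssec:chain}, which applies since $z \in W^{1,2}(0,T;L^2(\Omega)) \cap L^2(0,T;X\cap V)$, and yields derivative $(-\Delta z, \partial_t z)_{L^2(\Omega)}$. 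The term $E_\lambda$ is absolutely continuous because $z \in W^{1,2}(0,T;L^2(\Omega))$, with derivative $\lambda(z, \partial_t z)_{L^2(\Omega)}$. For $E_f$, the standard Bochner--Sobolev product rule applied to the $V^*$-$V$ duality of $f \in W^{1,2}(0,T;V^*)$ and $z \in W^{1,2}(0,T;V)$ gives absolute continuity with derivative $\langle \partial_t f, z\rangle_V + \langle f, \partial_t z\rangle_V$, and the second pairing coincides with $(f, \partial_t z)_{L^2(\Omega)}$ since $f(t) \in L^2(\Omega)$ for a.e.~$t$.

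The substantive obstacle is the nonlinear term $E_\sigma$, in which the $t$-dependence enters through both $\sigma$ and $z$. The plan is to prove
\[
	E_\sigma(t) - E_\sigma(s) = \int_s^t \!\int_\Omega \partial_t\sigma(r)\hat\gamma(z(r))\,\d x\,\d r + \int_s^t \!\int_\Omega \sigma(r)\gamma(z(r))\partial_t z(r)\,\d x\,\d r
\]
for all $s, t \in [0,T]$, from which both absolute continuity of $E_\sigma$ and the expected derivative follow at once. The integrability of the two space-time integrals is handled by H\"older's inequality: for the first, the bound $|\hat\gamma(z)| \leq C(|z|^2+1)$ with $z \in L^\infty(0,T;V) \hookrightarrow L^\infty(0,T;L^q(\Omega))$ pairs with $\partial_t\sigma \in L^2(0,T;L^p(\Omega))$ because the choice of $p$ yields $1/p + 2/q \leq 1$ in every dimension; for the second, $\sigma \in L^\infty$, $\gamma(z) \in L^\infty(0,T;L^2(\Omega))$ (by linear growth of $\gamma$ and $V \hookrightarrow L^2(\Omega)$) and $\partial_t z \in L^2(0,T;L^2(\Omega))$ together suffice.

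To derive the identity itself, I would pass through a joint time-mollification. Extending $\sigma$ and $z$ outside $[0,T]$ in an admissible way, let $\rho_\epsilon$ be a standard mollifier in $t$, and set $\sigma_\epsilon := \sigma*\rho_\epsilon$, $z_\epsilon := z*\rho_\epsilon$, which are smooth in $t$ with values in $L^p(\Omega)$ and $V$ respectively and converge to $\sigma$ in $W^{1,2}(0,T;L^p(\Omega))$ and to $z$ in $W^{1,2}(0,T;V)$ as $\epsilon \to 0_+$ (on any interior sub-interval). For the mollified pair, the Fr\'echet differentiability of $u \mapsto \int_\Omega \rho\,\hat\gamma(u)\,\d x$ recorded in Subsection~\ref{ssec:frechet}, combined with the elementary product rule in $t$, yields
\[
	\frac{\d}{\d t}\int_\Omega \sigma_\epsilon(t)\hat\gamma(z_\epsilon(t))\,\d x = \int_\Omega \partial_t\sigma_\epsilon \hat\gamma(z_\epsilon)\,\d x + \int_\Omega \sigma_\epsilon \gamma(z_\epsilon)\partial_t z_\epsilon\,\d x,
\]
and integrating in $t$ produces the regularized analogue of the identity. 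The hard part will be the limit $\epsilon \to 0_+$: continuity of the Nemytskii operators $u \mapsto \gamma(u)$ and $u \mapsto \hat\gamma(u)$ on $L^q(\Omega)$ (into $L^{q'}(\Omega)$ and $L^{q/2}(\Omega)$ respectively), combined with strong convergence of $z_\epsilon$, $\partial_t z_\epsilon$, $\sigma_\epsilon$ and $\partial_t\sigma_\epsilon$ in the corresponding Bochner spaces and the uniform $L^\infty$-bound on $\sigma_\epsilon$ inherited from $\sigma \in L^\infty(\Omega\times(0,T))$, allows one to pass to the limit by dominated convergence in both the spatial and temporal integrations. Assembling the four contributions then produces absolute continuity of $t \mapsto \E(z(t),t)$ together with the formula \eqref{eq:ac}.
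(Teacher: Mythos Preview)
Your decomposition and the handling of $E_\nabla$, $E_\lambda$ and $E_f$ match the paper exactly (the paper cites Subsection~\ref{ssec:chain} for the first two and \cite{e2ciec}*{Proposition~B.1} for the last). The substantive difference is in your treatment of $E_\sigma$. The paper does \emph{not} mollify: it first establishes absolute continuity of $\Phi(t):=\int_\Omega\sigma(t)\hat\gamma(z(t))\,\d x$ by a direct increment estimate, splitting $\sigma(t)\hat\gamma(z(t))-\sigma(s)\hat\gamma(z(s))$ into a $\sigma$-fixed and a $z$-fixed part, and (after reducing without loss of generality to $\hat\gamma$ convex via \eqref{ass:gammal}) bounding $|\hat\gamma(z(t))-\hat\gamma(z(s))|$ pointwise by $(|\gamma(z(t))|+|\gamma(z(s))|)|z(t)-z(s)|$; it then identifies $\Phi'$ by computing the difference quotient of $\sigma\hat\gamma(z)$ pointwise in $x$ and passing to the limit by dominated convergence.

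Your mollification route is a legitimate alternative and has the advantage of delivering absolute continuity and the derivative formula simultaneously through the integral identity, without invoking the convexity reduction. The price is more bookkeeping: you must justify the extension outside $[0,T]$, the passage from interior sub-intervals to the full interval, and the limit in the product $\sigma_\epsilon\gamma(z_\epsilon)\partial_t z_\epsilon$, where you need not only $\partial_t z_\epsilon\to\partial_t z$ in $L^2(0,T;L^2(\Omega))$ but also $\sigma_\epsilon\gamma(z_\epsilon)\to\sigma\gamma(z)$ in the same space; this last step uses that time-mollification of $\sigma\in L^\infty(\Omega\times(0,T))$ converges a.e.\ in $\Omega\times(0,T)$ with a uniform $L^\infty$ bound, together with $\gamma(z_\epsilon)\to\gamma(z)$ in $L^\infty(0,T;L^2(\Omega))$. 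One further caveat: your phrasing ``continuity of the Nemytskii operators on $L^q(\Omega)$'' is problematic when $N=1$ (then $q=\infty$ and Nemytskii operators are not continuous on $L^\infty$); in that case you should instead use $V\hookrightarrow C(\overline\Omega)$ and the continuity of $\gamma$ directly. With these points made precise, your argument goes through.
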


\begin{proof}
	It is already known that the first term and the second term of $ \E(z(t),t) $ are absolutely continuous on $ [0,T] $ (see Subsection \ref{ssec:chain}).
	As for the third term, we set
	\[
		\Phi(t) := \int_\Omega \sigma(t) \hat{\gamma}(z(t)) \, \d x
	\]
	for $ t \in [0,T] $.
	According to \eqref{ass:gammal}, we can assume that $ \hat{\gamma} $ is convex without loss of generality.
	Then it follows that
	\begin{align*}
		\left| \Phi(t) - \Phi(s) \right| &\leq \int_\Omega \left| \sigma(t) \right| \left| \hat{\gamma}(z(t)) - \hat{\gamma}(z(s)) \right| \d x + \int_\Omega \left| \sigma(t) - \sigma(s) \right| \left| \hat{\gamma}(z(s)) \right| \d x\\
		&\leq \left\| \sigma \right\|_{L^\infty(\Omega \times (0,T))} \left( \left\| \gamma(z(t)) \right\|_{L^2(\Omega)} + \left\| \gamma(z(s)) \right\|_{L^2(\Omega)} \right) \left( \left\| z(t) - z(s) \right\|_{L^2(\Omega)} \right)\\
		&\quad + \left\| \hat{\gamma}(z(s)) \right\|_{L^{q/2}(\Omega)} \int^t_s \left\| \partial_t \sigma(r) \right\|_{L^p(\Omega)} \d r\\
		&\leq C \left\| \sigma \right\|_{L^\infty(\Omega \times (0,T))} \left( \left\| z(t) \right\|_{L^2(\Omega)} + \left\| z(s) \right\|_{L^2(\Omega)} + 1 \right) \int^t_s \left\| \partial_t z(r) \right\|_{L^2(\Omega)} \d r\\
		&\quad + C \left( \left\| z(s) \right\|^2_{L^q(\Omega)} + 1 \right) \int^t_s \left\| \partial_t \sigma(r) \right\|_{L^p(\Omega)} \d r
	\end{align*}
	for a.e.~$ t,s \in (0,T) $.
	Since $ (z(t))_{t \in [0,T]} $ is bounded in $ V $, we have
	\[
		\left| \Phi(t) - \Phi(s) \right| \leq C \int^t_s \left( \left\| \partial_t z(r) \right\|_{L^2(\Omega)} + \left\| \partial_t \sigma(r) \right\|_{L^p(\Omega)} \right) \d r,
	\]
	where $ C>0 $ is a constant independent of $ t $ and $ s $.
	Therefore $ t \mapsto \Phi(t) $ is absolutely continuous on $ [0,T] $.

	Moreover, let $ t \in (0,T) $ and let $ h \in \R $ satisfy $ t+h \in (0,T) $.
	One can deduce that
	\begin{align*}
		\frac{\sigma(t+h) \hat{\gamma}(z(t+h)) - \sigma(t) \hat{\gamma}(z(t))}{h} \to \partial_t \sigma(t) \hat{\gamma}(z(t)) + \sigma(t) \gamma(z(t)) \partial_t z(t) \quad \mbox{ as } \ h \to 0
	\end{align*}
	a.e.~in $ \Omega $.
	Furthermore, it follows that
	\begin{align*}
		\mel{\left| \frac{\sigma(t+h) \hat{\gamma}(z(t+h)) - \sigma(t) \hat{\gamma}(z(t))}{h} \right|}\\
		&\leq \left\| \sigma \right\|_{L^\infty(\Omega \times (0,T))} \left| \frac{\hat{\gamma}(z(t+h))-\hat{\gamma}(z(t))}{h} \right| + \left| \hat{\gamma}(z(t)) \right| \left| \frac{\sigma(t+h) - \sigma(t)}{h} \right|\\
		&\leq \left\| \sigma \right\|_{L^\infty(\Omega \times (0,T))} \left| \frac{1}{h} \int^{t+h}_t \gamma(z(r)) \partial_t z(r) \, \d r \right| + \left| \hat{\gamma}(z(t)) \right| \left| \frac{1}{h} \int^{t+h}_t \partial_t \sigma(r) \, \d r \right|\\
		&\leq \left\| \sigma \right\|_{L^\infty(\Omega \times (0,T))} \left( \left| \gamma(z(t)) \right| \left| \partial_t z(t) \right| + 1 \right) + \left| \hat{\gamma}(z(t)) \right| \left( \left| \partial_t \sigma(t) \right| + 1 \right)
	\end{align*}
	a.e.~in $ \Omega $.
	Therefore, by the use of the dominated convergence theorem, we obtain
	\[
		\Phi'(t) = \int_\Omega \left( \partial_t \sigma(t) \hat{\gamma}(z(t)) + \sigma(t) \hat{\gamma}(z(t)) \partial_t z(t) \right) \d x
	\]
	for a.e.~$ t \in (0,T) $.
	Finally, as for the fourth term of $ \E(z(t),t) $, see \cite{e2ciec}*{Proposition B.1}.
\end{proof}


\begin{bibdiv}
\begin{biblist}

\bib{ak}{article}{
      author={Akagi, Goro},
      author={Kimura, Masato},
       title={\emph{Unidirectional evolution equations of diffusion type}},
        date={2019},
        ISSN={0022-0396},
     journal={J. Differential Equations},
      volume={266},
      number={1},
       pages={1\ndash 43},
         url={https://doi.org/10.1016/j.jde.2018.05.022},
}

\bib{e2ciec}{article}{
      author={Akagi, Goro},
      author={Sato, Kotaro},
       title={\emph{Evolution equations with complete irreversibility and
  energy conservation}},
        date={2023},
        ISSN={0022-247X},
     journal={J. Math. Anal. Appl.},
      volume={527},
      number={1},
       pages={Paper No. 127348},
         url={https://doi.org/10.1016/j.jmaa.2023.127348},
}

\bib{at1990}{article}{
      author={Ambrosio, Luigi},
      author={Tortorelli, Vincenzo~Maria},
       title={\emph{Approximation of functionals depending on jumps by elliptic
  functionals via {$\Gamma$}-convergence}},
        date={1990},
     journal={Comm. Pure Appl. Math.},
      volume={43},
      number={8},
       pages={999\ndash 1036},
         url={https://doi.org/10.1002/cpa.3160430805},
}

\bib{at1992}{article}{
      author={Ambrosio, Luigi},
      author={Tortorelli, Vincenzo~Maria},
       title={\emph{On the approximation of free discontinuity problems}},
        date={1992},
     journal={Boll. Un. Mat. Ital. B (7)},
      volume={6},
      number={1},
       pages={105\ndash 123},
}

\bib{arai}{article}{
      author={Arai, Tsutomu},
       title={\emph{On the existence of the solution for $\partial
  \varphi(u'(t)) + \partial \psi(u(t)) \ni f(t)$}},
        date={1979},
     journal={J. Fac. Sci. Univ. Tokyo sec. IA Math.},
      volume={26},
       pages={75\ndash 96},
}

\bib{barbu1975}{article}{
      author={Barbu, Viorel},
       title={\emph{Existence theorems for a class of two point boundary
  problems}},
        date={1975},
        ISSN={0022-0396},
     journal={J. Differential Equations},
      volume={17},
       pages={236\ndash 257},
         url={https://doi.org/10.1016/0022-0396(75)90043-1},
}

\bib{barbu1976}{book}{
      author={Barbu, Viorel},
       title={Nonlinear {S}emigroups and {D}ifferential {E}quations in {B}anach
  {S}paces},
   publisher={Noordhoff International Publishing, Leiden},
        date={1976},
}

\bib{brezis1971}{inproceedings}{
      author={Br\'{e}zis, Haim},
       title={\emph{Monotonicity methods in {H}ilbert spaces and some
  applications to nonlinear partial differential equations}},
        date={1971},
   booktitle={Contributions to nonlinear functional analysis ({P}roc.
  {S}ympos., {M}ath. {R}es. {C}enter, {U}niv. {W}isconsin, {M}adison, {W}is.,
  1971)},
   publisher={Academic Press, New York},
       pages={101\ndash 156},
}

\bib{brezisF}{book}{
      author={Br\'{e}zis, Haim},
       title={Op\'{e}rateurs maximaux monotones et semi-groupes de contractions
  dans les espaces de {H}ilbert},
      series={North-Holland Mathematics Studies, No. 5},
   publisher={North-Holland Publishing Co., Amsterdam-London; American Elsevier
  Publishing Co., Inc., New York},
        date={1973},
}

\bib{ch}{book}{
      author={Cazenave, Thierry},
      author={Haraux, Alain},
       title={An introduction to semilinear evolution equations},
      series={Oxford Lecture Series in Mathematics and its Applications},
   publisher={The Clarendon Press, Oxford University Press, New York},
        date={1998},
      volume={13},
        ISBN={0-19-850277-X},
}

\bib{chambolle2003}{article}{
      author={Chambolle, Antonin},
       title={\emph{A density result in two-dimensional linearized elasticity,
  and applications}},
        date={2003},
        ISSN={0003-9527},
     journal={Arch. Ration. Mech. Anal.},
      volume={167},
      number={3},
       pages={211\ndash 233},
         url={https://doi.org/10.1007/s00205-002-0240-7},
}

\bib{clark}{article}{
      author={Clark, {D.}S.},
       title={\emph{Short proof of a discrete {G}ronwall inequality}},
        date={1987},
        ISSN={0166-218X},
     journal={Discrete Appl. Math.},
      volume={16},
      number={3},
       pages={279\ndash 281},
         url={https://doi.org/10.1016/0166-218X(87)90064-3},
}

\bib{cv}{article}{
      author={Colli, P.},
      author={Visintin, A.},
       title={\emph{On a class of doubly nonlinear evolution equations}},
        date={1990},
        ISSN={0360-5302},
     journal={Comm. Partial Differential Equations},
      volume={15},
      number={5},
       pages={737\ndash 756},
         url={https://doi.org/10.1080/03605309908820706},
}

\bib{colli}{article}{
      author={Colli, Pierluigi},
       title={\emph{On some doubly nonlinear evolution equations in {B}anach
  spaces}},
        date={1992},
        ISSN={0916-7005},
     journal={Japan J. Indust. Appl. Math.},
      volume={9},
      number={2},
       pages={181\ndash 203},
         url={https://doi.org/10.1007/BF03167565},
}

\bib{corduneanu}{book}{
      author={Corduneanu, C.},
       title={Principles of differential and integral equations},
     edition={Second edition},
   publisher={Chelsea Publishing Co., Bronx, N.Y.},
        date={1977},
}

\bib{dft2005}{article}{
      author={Dal~Maso, Gianni},
      author={Francfort, {G.}A.},
      author={Toader, Rodica},
       title={\emph{Quasistatic crack growth in nonlinear elasticity}},
        date={2005},
        ISSN={0003-9527},
     journal={Arch. Ration. Mech. Anal.},
      volume={176},
      number={2},
       pages={165\ndash 225},
         url={https://doi.org/10.1007/s00205-004-0351-4},
}

\bib{dt2002}{article}{
      author={Dal~Maso, Gianni},
      author={Toader, Rodica},
       title={\emph{A model for the quasi-static growth of brittle fractures:
  existence and approximation results}},
        date={2002},
        ISSN={0003-9527},
     journal={Arch. Ration. Mech. Anal.},
      volume={162},
      number={2},
       pages={101\ndash 135},
         url={https://doi.org/10.1007/s002050100187},
}

\bib{fl}{article}{
      author={Francfort, {G.}A.},
      author={Larsen, {C.}L.},
       title={\emph{Existence and convergence for quasi-static evolution in
  brittle fracture}},
        date={2003},
        ISSN={0010-3640},
     journal={Comm. Pure Appl. Math.},
      volume={56},
      number={10},
       pages={1465\ndash 1500},
         url={https://doi.org/10.1002/cpa.3039},
}

\bib{fm}{article}{
      author={Francfort, {G.}A.},
      author={Marigo, Jean-Jacques},
       title={\emph{Revisiting brittle fracture as an energy minimization
  problem}},
        date={1998},
        ISSN={0022-5096},
     journal={J. Mech. Phys. Solids},
      volume={46},
      number={8},
       pages={1319\ndash 1342},
         url={https://doi.org/10.1016/S0022-5096(98)00034-9},
}

\bib{giacomini}{article}{
      author={Giacomini, Alessandro},
       title={\emph{Ambrosio-{T}ortorelli approximation of quasi-static
  evolution of brittle fractures}},
        date={2005},
        ISSN={0944-2669},
     journal={Calc. Var. Partial Differential Equations},
      volume={22},
      number={2},
       pages={129\ndash 172},
         url={https://doi.org/10.1007/s00526-004-0269-6},
}

\bib{griffith}{article}{
      author={Griffith, {A.}A.},
       title={\emph{The phenomena of rupture and flow in solids}},
        date={1921},
        ISSN={02643952},
     journal={Philosophical Transactions of the Royal Society of London. Series
  A, Containing Papers of a Mathematical or Physical Character},
      volume={221},
       pages={163\ndash 198},
         url={http://www.jstor.org/stable/91192},
}

\bib{grisvard}{book}{
      author={Grisvard, P.},
       title={Elliptic problems in nonsmooth domains},
      series={Monographs and Studies in Mathematics},
   publisher={Pitman (Advanced Publishing Program), Boston, MA},
        date={1985},
      volume={24},
        ISBN={0-273-08647-2},
}

\bib{kmz}{article}{
      author={Knees, Dorothee},
      author={Mielke, Alexander},
      author={Zanini, Chiara},
       title={\emph{On the inviscid limit of a model for crack propagation}},
        date={2008},
        ISSN={0218-2025},
     journal={Math. Models Methods Appl. Sci.},
      volume={18},
      number={9},
       pages={1529\ndash 1569},
         url={https://doi.org/10.1142/S0218202508003121},
}

\bib{lm}{book}{
      author={Lions, J.-L.},
      author={Magenes, E.},
       title={Non-homogeneous boundary value problems and applications. {V}ol.
  {I}},
      series={Die Grundlehren der mathematischen Wissenschaften, Band 181},
   publisher={Springer-Verlag, New York-Heidelberg},
        date={1972},
}

\bib{ms}{article}{
      author={Magenes, Enrico},
      author={Stampacchia, Guido},
       title={\emph{I problemi al contorno per le equazioni differenziali di
  tipo ellittico}},
        date={1958},
        ISSN={0391-173X},
     journal={Ann. Scuola Norm. Sup. Pisa Cl. Sci. (3)},
      volume={12},
       pages={247\ndash 358},
}

\bib{mz}{article}{
      author={Mielke, Alexander},
      author={Zelik, Sergey},
       title={\emph{On the vanishing-viscosity limit in parabolic systems with
  rate-independent dissipation terms}},
        date={2014},
     journal={Ann. Scuola Norm. Sup. Pisa Cl. Sci. (5)},
      volume={13},
       pages={67\ndash 135},
}

\bib{rockafellar}{article}{
      author={Rockafellar, {R.}T.},
       title={\emph{On the maximal monotonicity of subdifferential mappings}},
        date={1970},
        ISSN={0030-8730},
     journal={Pacific J. Math.},
      volume={33},
       pages={209\ndash 216},
         url={http://projecteuclid.org/euclid.pjm/1102977253},
}

\bib{sss}{article}{
      author={Schimperna, Giulio},
      author={Segatti, Antonio},
      author={Stefanelli, Ulisse},
       title={\emph{Well-posedness and long-time behavior for a class of doubly
  nonlinear equations}},
        date={2007},
        ISSN={1078-0947},
     journal={Discrete Contin. Dyn. Syst.},
      volume={18},
      number={1},
       pages={15\ndash 38},
         url={https://doi.org/10.3934/dcds.2007.18.15},
}

\bib{simon}{article}{
      author={Simon, Jacques},
       title={\emph{Compact sets in the space {$L^p(0,T;B)$}}},
        date={1987},
        ISSN={0003-4622},
     journal={Ann. Mat. Pura Appl. (4)},
      volume={146},
       pages={65\ndash 96},
         url={https://doi.org/10.1007/BF01762360},
}

\end{biblist}
\end{bibdiv}

\end{document}